\documentclass[oneside,preprint,12pt,numbers,sort&compress]{elsarticle}
\usepackage{amssymb}
\usepackage{amsthm}
\usepackage{mathrsfs}
\usepackage{amsmath}
\usepackage{lineno}
\usepackage[text={170mm,240mm},centering]{geometry}

\usepackage{xcolor}
\usepackage[colorlinks,allcolors=teal,bookmarksnumbered=true]{hyperref} 
\usepackage[nameinlink,capitalize]{cleveref} 
\crefformat{equation}{#2(#1)#3}
\crefmultiformat{equation}{(#2#1#3)}{ and~(#2#1#3)}{, (#2#1#3)}{ and~(#2#1#3)}

\journal{journal}

\allowdisplaybreaks[4]

\begin{document}
\newtheorem{theorem}{Theorem}[section]
\newtheorem{lemma}[theorem]{Lemma}
\newtheorem{proposition}[theorem]{Proposition}
\newtheorem{remark}[theorem]{Remark}
\newtheorem{definition}[theorem]{Definition}
\newtheorem{Hypothesis}[theorem]{Hypothesis}
\newtheorem{corollary}[theorem]{Corollary}

\numberwithin{equation}{section} 

\begin{frontmatter}
\title{Pullback exponential attractors for 3D non-autonomous Boussinesq equations with fractional Laplacian}


\author[mymainaddress1,mymainaddress2]{Tong Kou}
\ead{kou\_tong2023@163.com}

\author[mymainaddress1,mymainaddress3]{Hui Liu\texorpdfstring{\corref{mycorrespondingauthor}}{}}
\cortext[mycorrespondingauthor]{Corresponding author}
\ead{ss\_liuhui@ujn.edu.cn}

\author[mymainaddress4]{Xiuqing Wang}
\ead{daqingwang@kust.edu.cn}
		
\author[mymainaddress5]{Jie Xin}
\ead{fdxinjie@qfnu.edu.cn}


\address[mymainaddress1]{School of Mathematical Sciences, Qufu Normal University, Qufu 273165, PR China}

\address[mymainaddress2]{MOE-LCSM, School of Mathematics and Statistics, Hunan Normal University, Changsha 410081, PR China}

\address[mymainaddress3]{School of Mathematical Sciences, University of Jinan, Jinan 250022, PR China}

\address[mymainaddress4]{Faculty of Science, Kunming University of Science and Technology, Kunming 650500‌, PR China}

\address[mymainaddress5]{School of Information Engineering, Shandong Youth University of Political Science, Jinan 250103, PR China}


\begin{abstract}
\par Focusing on the pullback asymptotic behavior exhibited by the system, this paper studies the three-dimensional (3D) non-autonomous Boussinesq equations with fractional Laplacian. For $ \alpha > \frac{5}{4} $ and $ \beta > \frac{5}{8} $, by adopting the $ \ell $-trajectory method introduced in \cite{MP2002}, we demonstrate the existence of both a finite-dimensional pullback attractor and a pullback exponential attractor for the process $ \{L(t, \tau)\}_{t \geq \tau} $ that corresponds to \cref{eq0101} in the phase space $ X_{\ell} $. Furthermore, we establish the existence of a pullback exponential attractor for the process $ \{U(t, \tau)\}_{t \geq \tau} $ within the original phase space $ H_\sigma^\alpha(\Omega) \times H^{\beta}(\Omega) $.
\end{abstract}

\begin{keyword} Pullback exponential attractors;  Boussinesq equations; Fractional Laplacian; $\ell$-trajectory method.
\MSC[2020] 35B41, 37C60, 35R11, 37L30.
\end{keyword}
\end{frontmatter}

\section{Introduction}
The Boussinesq system serves as a classical mathematical model for describing buoyancy-driven fluid motions and has been extensively applied in fields such as atmospheric science and ocean circulation (see, e.g., \cite{Gill1982,Pedlosky1987,CD1999,MB2002}). Coupling the incompressible Navier-Stokes equations with the heat conduction equation, the Boussinesq system accurately describes the interaction between the velocity and temperature fields under gravity, thus providing a critical mathematical framework for investigating complex phenomena such as atmospheric circulation, oceanic convection, and density-driven heavy gas diffusion in industrial processes. 

Over the past several decades, extensive research focused on the Boussinesq system has yielded notable advancements. With the development and refinement of fractional calculus theory, the 3D Boussinesq system incorporating fractional dissipation terms has garnered significant interest within the research domains of fluid mechanics and partial differential equations (PDEs). Notably, while the system might at first seem to be merely a mathematical generalization, it bears concrete relevance to geophysical scenarios. A typical example is the flow regime in the middle atmosphere: as airflows propagate upward, they undergo modifications driven by variations in atmospheric properties—even when the incompressibility approximation and Boussinesq approximation remain valid. Here, atmospheric thinning dampens the effects of kinematic and thermal diffusion, with this anomalous dampening effectively modeled via the spatial fractional Laplacian (see, e.g., \cite{Gill1982}).

In this paper, we consider the following 3D non-autonomous Boussinesq equations with fractional Laplacian:
\begin{equation}\label{eq0101}
\begin{cases}
\partial_{t}u+\nu(-\Delta)^{\alpha}u+(u\cdot\nabla)u+\nabla p = \theta e_3 + f(x,t),\ (x,t)\in \Omega \times \mathbb{R}_{\tau}, \\
\partial_{t}\theta+\kappa(-\Delta)^{\beta}\theta+(u\cdot\nabla)\theta = g(x,t),\ (x,t)\in \Omega \times \mathbb{R}_{\tau}, \\
\nabla\cdot u = 0,\ (x,t)\in \Omega \times \mathbb{R}_{\tau},  \\
u(x,\tau) = u_\tau(x), \quad \theta(x,\tau) = \theta_\tau(x),\ x \in \Omega, 
\end{cases}
\end{equation}
where $\Omega = \mathbb{T}^3$ is defined as \(\mathbb{R}^3/(2\pi\mathbb{Z}^3)\), corresponding to the periodic domain \([-\pi, \pi]^3\); and \(\mathbb{R}_{\tau}\) denotes the interval \([\tau, +\infty)\) with \(\tau \in \mathbb{R}\). $u=(u_1,u_2,u_3)$ denotes the velocity field, while $p$ represents the pressure and $\theta$ stands for the scalar temperature. The constants $\nu>0$ and $\kappa>0$ are the coefficients
of velocity dissipation and thermal diffusion, respectively. Here, \(f = (f_1, f_2, f_3)\) denotes the 3D vector-valued external forcing term for the velocity equation, and \(g\) denotes the scalar external forcing term for the temperature equation, both of which are time-dependent. $e_3=(0,0,1)$. 

Extensive literature has been focused on the investigation of Boussinesq equations in mathematics, with remarkable progress achieved. Specifically, when $\alpha = \beta = 1$, \cref{eq0101} reduces to the classical Boussinesq equations. Global weak solutions, conditional and partial regularity results, and various aspects of its long-time dynamics have been extensively studied. A series of systematic results can be found in \cite{BS2004,CRT2004,WangXM2007,AH2007,GR2020}.

The well-posedness of Boussinesq systems with fractional
dissipation has been studied in different spatial dimensions. For research on the 2D case, please refer to \cite{YX2016,DYZ2020}; for the 3D case, results on well-posedness are available in \cite{BF2017,YJW2018,LY2021,XZ2024}; regarding general $N$-dimensional cases, relevant results are reported in \cite{Yamazaki2015}, in which Yamazaki investigated the $N$-dimensional generalized Boussinesq system in which dissipation and diffusion generalized by fractional Laplacians, demonstrating that under critical dissipation, the solution remains smooth for all times even if the diffusivity is zero. Additionally, the dynamical behaviors of Boussinesq equations with fractional Laplacian have attracted widespread attention. For example, in the 2D subcritical case, Huo and Huang established the global well-posedness of strong solutions and the existence of the global attractor in \cite{HH2016}. Huang, Huo and Jolly established the finite-dimensionality of the global attractor and estimated the number of determining modes in \cite{HHJ2018}. In \cite{AT2023}, Anh and Tinh systematically analyzed the existence, uniqueness, and regularity of global weak solutions for the 3D generalized Boussinesq system with fractional Laplacians, and revealed the asymptotic behavior of weak solutions via attractors based on the Cheskidov-Lu evolutionary system framework. For systems with partial dissipation, we refer to \cite{YJW2018,HS2020,HMS2022}. Related results for Boussinesq systems with damping and stochastic forcing can be found in \cite{LLX2022} and \cite{WH2024,ZCY2024}, respectively. For additional research on the Boussinesq equations, we refer the reader to \cite{XY2013,KC2014,JY2016,BF2017,Yue2023,WLCD2025,LLS2025,LSL2025}.

Global attractors provide a framework for describing the long-time dynamics of dissipative evolution equations (see \cite{Temam1997,CV2002}). However, their defining attraction property does not specify a rate of convergence. Exponential attractors provide compact, positively invariant sets of finite fractal dimension that attract bounded sets at an exponential rate; see \cite{EFNT1994}. Currently, there are three main classical methods for constructing exponential attractors for autonomous dissipative systems: those based on the contraction property of solution differences \cite{EFNT1994}, smoothness \cite{EMZ2000}, and quasi-stability \cite{Chueshov2015}. Furthermore, to ensure that the constructed exponential attractors possess finite fractal dimensions, it is also necessary to verify the Hölder continuity of the semigroup with respect to time—a condition that is often difficult to check when the regularity of solutions is poor.

Similar to the autonomous setting, many scholars have also put forward the notion of pullback exponential attractors. Efendiev, Zelik, and Miranville \cite{EZM2005} utilized the idea of forward attractors to extend the development of exponential attractors from discrete semigroups (see \cite{EMZ2000}) to address non-autonomous problems, and based on the smoothness of evolutionary processes, proposed an explicit construction algorithm for the discrete systems. Furthermore, they also developed exponential attractors for continuous-time processes induced by non-autonomous reaction-diffusion systems. Subsequently, this development was further refined within the pullback framework: \cite{LMR2010,CE2011}, drawing on the existence of fixed bounded pullback absorbing sets, extended the algorithm to continuous-time evolutionary processes, resulting in pullback exponential attractors whose sections are bounded in the past but potentially unbounded in the future. Carvalho and Sonner \cite{CS2013} proved the existence of pullback exponential attractors for asymptotically compact processes under weaker assumptions, which do not require the processes to have strong temporal regularity nor the sections to be uniformly bounded in the past. Moreover, by leveraging the existence of a class of time-dependent absorbing sets, they obtained improved estimates for the fractal dimensions of pullback attractors. You \cite{YouB2021} developed an abstract framework for the construction of pullback exponential attractors based on the $\ell$-trajectory method, and this approach was subsequently applied to the three-dimensional non-autonomous primitive equations in \cite{YouB2023}. Related developments for other classes of non-autonomous dissipative systems can be found in the works of Aouadi \cite{Aouadi2023jmp,Aouadi2024amo,Aouadi2024aa}, where pullback exponential attractors and their robustness or continuity properties were investigated for several plate and elasticity models.

In this paper, we study the pullback dynamics of the 3D non-autonomous Boussinesq system \cref{eq0101} on $\Omega$ for $\alpha>\frac{5}{4}$ and $\beta>\frac{5}{8}$. Under the assumptions on the forcing terms stated below, we establish the existence of pullback attractors and pullback exponential attractors in the phase space $H_\sigma^\alpha(\Omega)\times H^\beta(\Omega)$. The restrictions on $\alpha$ and $\beta$ are used to close the higher-order energy estimates and the estimates for differences of strong solutions required by the $\ell$-trajectory construction. A key difficulty lies in controlling the coupled transport terms in these difference estimates at the regularity levels of the chosen phase space. To this end, we employ the $\ell$-trajectory method of \cite{MP2002} and establish the smoothing property of the trajectory process together with the Lipschitz continuity of the endpoint evaluation map on the relevant trajectory sets. These properties allow us to construct attractors in the trajectory space $X_\ell$ and transfer them to the original phase space.

The remainder of the paper is organized as follows. In \cref{sec02}, we introduce the functional setting and preliminary results and establish global well-posedness of \cref{eq0101} for strong solutions. In \cref{sec03}, we recall the definitions of pullback attractors and pullback exponential attractors and prove the existence of the pullback attractor $\hat{\mathcal{A}}_\ell$ for the trajectory process $\{L(t,\tau)\}_{t\geq\tau}$ in $X_\ell$, following the $\ell$-trajectory approach used in \cite{LY2020,AT2021,YouB2021,YouB2023,AS2024}. In \cref{sec04}, we use the smoothing property of this process to construct a pullback exponential attractor in $X_\ell$. We then establish the existence of pullback attractors and pullback exponential attractors for $\{U(t,\tau)\}_{t\geq\tau}$ in the original phase space $H_\sigma^\alpha(\Omega)\times H^\beta(\Omega)$.

\section{Preliminaries}\label{sec02}
We use $L^r(\Omega)$ to denote the usual Lebesgue spaces. We define 
$$H_{1}=\left\{u\in [L^{2}(\Omega)]^{3}\ \middle|\ \nabla\cdot u = 0, \int_{\Omega}u dx=0\right\}, \quad H_{2}=\left\{\theta\in L^{2}(\Omega)\ \middle|\ \int_{\Omega}\theta dx=0\right\}.$$
Let $\Lambda :=(-\Delta)^{\frac{1}{2}}$ denote the fractional Laplacian. For any $f\in L^{1}(\Omega)$, its Fourier transform is define by
$$ \widehat{f}(k): = \frac{1}{(2\pi)^3} \int_{\Omega} f(x) e^{-ik \cdot x}dx \ \text{, which satisfies} \quad \widehat{\Lambda f}(k) = |k| \widehat{f}(k).$$
In this paper, it is convenient to express $\Lambda^s f$, where $\Lambda^s$ is defined by
\begin{equation*}
\Lambda^s f(x):= \sum_{k \in \mathbb{Z}^3 \setminus \{0\}} |k|^s \widehat{f}(k) \mathrm{e}^{\mathrm{i} k \cdot x} \quad \text{when} \quad f(x)= \sum_{k \in \mathbb{Z}^3 \setminus \{0\}} \widehat{f}(k) \mathrm{e}^{\mathrm{i} k \cdot x}.
\end{equation*}

For $s \in \mathbb{R}$, we define the homogeneous periodic Sobolev space 
\begin{equation*}
H^s(\Omega) = \left\{ f= \sum_{k \in \mathbb{Z}^3 \setminus \{0\}} \widehat{f}(k) \mathrm{e}^{\mathrm{i} k \cdot x}\ \middle|\ \sum_{k \in \mathbb{Z}^3\setminus \{0\}} |k|^{2s} |\widehat{f}(k)|^2 < \infty \right\},
\end{equation*}
where the norm is defined by
\begin{equation*}
\| f \|_{H^s(\Omega)}^2 := (2\pi)^3 \sum_{k \in \mathbb{Z}^3 \setminus \{0\}} |k|^{2s} |\widehat{f}(k)|^2.
\end{equation*}
This norm is consistent with the fractional Laplacian, satisfying the relation
$$
\| f \|_{H^s(\Omega)} = \| \Lambda^s f \|_{L^2(\Omega)}.
$$

For the velocity field, we define the divergence-free Sobolev space
\[
H_\sigma^s(\Omega)
=
\left\{
u\in [H^s(\Omega)]^3
\ \middle|\
\nabla\cdot u=0
\right\}.
\]
Since the zero Fourier mode is excluded in the definition of $H^s(\Omega)$, the zero-mean condition is already incorporated. 

According to spectral theory, the operator $\Lambda$ possesses a sequence of positive eigenvalues \(0 < \lambda_1 \leq \lambda_2 \leq \cdots\) increasing to infinity (i.e., $\lambda_j \to \infty$ as $j \to \infty$) in the space $H_{2}$, together with a corresponding orthonormal sequence of eigenvectors $\{a_j\}_{j=1}^{\infty} \subset D(\Lambda)$ satisfying $\Lambda a_j = \lambda_j a_j$. Similarly, in $H_{1}$, $\Lambda$ possesses a sequence of positive eigenvalues \(0 < \eta_1 \leq \eta_2 \leq \cdots\) increasing to infinity (i.e., $\eta_j \to \infty$ as $j \to \infty$), with a corresponding orthonormal sequence of eigenvectors $\{b_j\}_{j=1}^{\infty} \subset D(\Lambda)$ satisfying $\Lambda b_j = \eta_j b_j$.

Throughout this paper, we assume that
$f\in L^2_{\mathrm{loc}}(\mathbb{R};H_1)$ and
$g\in L^2_{\mathrm{loc}}(\mathbb{R};H_2)$ satisfy
\begin{equation}\label{ass:forcing}
G_f := \sup_{r\in\mathbb{R}}
\int_{r-1}^{r}\|f(s)\|_{L^2(\Omega)}^2\,ds < \infty, \quad
G_g := \sup_{r\in\mathbb{R}}
\int_{r-1}^{r}\|g(s)\|_{L^2(\Omega)}^2\,ds < \infty.
\end{equation}
$C>0$ denotes a generic constant whose value may change from line to line. 

On the periodic domain $\Omega = \mathbb{T}^3$, we apply the Leray projection operator $\mathbb{P}:[L^2(\Omega)]^{3}\to H_{1}$ to the velocity equation of the Boussinesq system. This eliminates the pressure term and yields
\begin{equation}\label{eq0201}
\begin{cases}
\partial_t u + \nu \Lambda^{2\alpha} u + \mathbb{P} \left( (u \cdot \nabla) u \right) = \mathbb{P} (\theta e_3) + \mathbb{P} f(x,t), \\
\partial_t \theta + \kappa \Lambda^{2\beta} \theta + (u \cdot \nabla) \theta = g(x,t), \\
u(x,\tau) = u_\tau(x), \quad \theta(x,\tau) = \theta_\tau(x).
\end{cases}
\end{equation} 

We now recall definitions and results pertaining to pullback attractors and pullback exponential attractors for the process generated by an inﬁnite-dimensional non-autonomous dynamical system, for further details, please refer to \cite{CLR2013,CV2002,Ha1988}.

\par Let $X$ be a Banach space with distance $d_{X}(\cdot,\cdot)$ and let $\mathscr{C}(X)$ denote the set of all continuous mappings from $X$ to $X$.  A process on $X$ is a family $\{ U(t,\tau) : t \geq \tau \} \subset \mathscr{C}(X)$ that satisfies the following properties:  
\begin{enumerate}  
    \item $U(\tau,\tau) = I$ for all $\tau \in \mathbb{R}$;
    \item $U(t,\tau) = U(t,r)U(r,\tau)$ for all $t \geq r \geq \tau$;
    \item The map $(t,\tau,x) \mapsto U(t,\tau)x$ is continuous for all $t \geq \tau$ and $x \in X$.  
\end{enumerate}   

\par We denote by $\text{dist}(A,B)$ the Hausdorff semidistance between $A$ and $B$, defined as
\[
\text{dist}(A,B)=\sup_{a\in A} \inf_{b\in B} d_{X}(a,b),\quad A,B\subset X.
\]

\begin{lemma}[\cite{MN1996,CV2002,MP2002}]\label{ALlem}
Let $ p_1 \in (1, \infty] $ and $ p_2 \in [1, \infty) $ be given exponents. Let $ X $ be a Banach space, and let $ X_0, X_1 $ be separable reflexive Banach spaces satisfying $ X_0 \subset\subset X \subset X_1 $. Then 
$$
Y = \left\{ u \in L^{p_1}(0, \ell; X_0) : u' \in L^{p_2}(0, \ell; X_1) \right\} \subset\subset L^{p_1}(0, \ell; X),
$$
where $ \ell $ is a fixed positive constant.
\end{lemma}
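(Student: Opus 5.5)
This is the Aubin--Lions--Simon compactness lemma, and I would prove it by the classical route: an interpolation (Ehrling) inequality combined with a time-equicontinuity argument. Compactness is understood as relative compactness of bounded subsets of $Y$. The first step is Ehrling's lemma. Since $X_0\subset\subset X\subset X_1$, for every $\varepsilon>0$ there is $C_\varepsilon>0$ with
\[
\|v\|_X\le \varepsilon\|v\|_{X_0}+C_\varepsilon\|v\|_{X_1}\qquad\text{for all } v\in X_0 .
\]
I would prove this by contradiction. If it failed, there would be $v_n$ with $\|v_n\|_{X_0}=1$ and $\|v_n\|_X>\varepsilon+n\|v_n\|_{X_1}$. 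Compactness of $X_0\hookrightarrow X$ gives a subsequence with $v_n\to v$ in $X$, so $\|v\|_X\ge\varepsilon$. The inequality forces $\|v_n\|_{X_1}\to0$, and continuity of $X\hookrightarrow X_1$ then gives $v=0$, a contradiction.

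Next, take a bounded sequence $(u_n)$ in $Y$. Using reflexivity and separability (and weak-$*$ compactness when $p_1=\infty$), I extract a subsequence with $u_n\rightharpoonup u$ weakly (weakly-$*$) in $L^{p_1}(0,\ell;X_0)$ and $u_n'\rightharpoonup u'$ in $L^{p_2}(0,\ell;X_1)$; for $p_2=1$ the derivative converges only in the sense of distributions, which suffices below. Setting $w_n=u_n-u$, it remains to show $w_n\to0$ in $L^{p_1}(0,\ell;X)$. By Ehrling,
\[
\|w_n\|_{L^{p_1}(X)}\le \varepsilon\sup_n\|w_n\|_{L^{p_1}(X_0)}+C_\varepsilon\|w_n\|_{L^{p_1}(X_1)},
\]
so the whole problem reduces to proving $w_n\to0$ strongly in $L^{p_1}(0,\ell;X_1)$, since $\varepsilon$ is arbitrary.

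This reduction is the main obstacle. Because $w_n'$ is bounded in $L^{p_2}(0,\ell;X_1)$, each $w_n$ lies in $W^{1,1}(0,\ell;X_1)\subset C([0,\ell];X_1)$, and
\[
\|w_n(t)-w_n(s)\|_{X_1}\le |t-s|^{1-1/p_2}\|w_n'\|_{L^{p_2}(X_1)} .
\]
When $p_2>1$ this makes $(w_n)$ equicontinuous in $X_1$. When $p_2=1$ I would use the equi-integrability of $(w_n')$ in $L^1$ instead, or else argue through the translation estimates of Simon. For pointwise convergence, I write $w_n(t)=\frac1h\int_t^{t+h}w_n\,ds-\frac1h\int_t^{t+h}\int_t^s w_n'(r)\,dr\,ds$. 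The first term lies in a bounded subset of $X_0$ and converges weakly to $0$ there, because $w_n\rightharpoonup0$ in $L^{p_1}(X_0)$; compactness of $X_0\hookrightarrow X_1$ then upgrades this to strong convergence in $X_1$. The second term is $O(h^{1-1/p_2})$ uniformly in $n$. Hence $w_n(t)\to0$ in $X_1$ for every $t$, and the same argument gives a uniform bound $\sup_t\|w_n(t)\|_{X_1}\le C$.

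For $p_1<\infty$, dominated convergence then yields $w_n\to0$ in $L^{p_1}(0,\ell;X_1)$. For $p_1=\infty$, the pointwise convergence together with equicontinuity upgrades to uniform convergence on $[0,\ell]$ by an Arzel\`a--Ascoli type argument. In the case $p_1=\infty$, $p_2=1$, uniform convergence is not available; there I would only claim convergence in $L^r(0,\ell;X)$ for $r<\infty$, and I would flag this endpoint as the delicate case, consistent with the cited sources.
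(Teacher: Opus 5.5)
Your argument is correct for $p_2>1$ and all $p_1\in(1,\infty]$. It is the classical Lions--Temam proof, and the Ehrling, weak-limit, averaging and Arzel\`a--Ascoli steps all check out. This is also the only regime the paper needs: it only cites the lemma, gives no proof, and uses it for $W_\ell\subset\subset X_\ell$ with $p_1=p_2=2$.

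\textbf{Gap at $p_2=1$, $p_1<\infty$.} This case is part of the statement and is true, but your argument does not cover it. The subtraction $w_n=u_n-u$ breaks down: the limit $u$ need not satisfy $u'\in L^1(0,\ell;X_1)$, since its derivative can be a measure (a jump in time). So $w_n$ need not be continuous and $(w_n')$ need not be bounded in $L^1$. Even for $u_n$ itself, your H\"older bound degenerates to $|t-s|^{0}$, so the remainder in your splitting of $w_n(t)$ is $O(1)$ rather than small. Your suggested fix via equi-integrability fails: a bounded sequence in $L^1$ is not equi-integrable (take functions equal to $n$ on $[t_0,t_0+1/n]$ and $0$ elsewhere), so no equicontinuity is available.

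The missing idea is to control time translations of the $u_n$ themselves in an integral norm rather than pointwise:
\[
\|u_n(\cdot+h)-u_n\|_{L^1(0,\ell-h;X_1)}\le h\,\|u_n'\|_{L^1(0,\ell;X_1)},\qquad \|u_n(\cdot+h)-u_n\|_{L^\infty(0,\ell-h;X_1)}\le \|u_n'\|_{L^1(0,\ell;X_1)}.
\]
Interpolating gives $\|u_n(\cdot+h)-u_n\|_{L^{p_1}(0,\ell-h;X_1)}\le h^{1/p_1}\sup_m\|u_m'\|_{L^1(0,\ell;X_1)}$, uniformly in $n$. The averages $\int_{t_1}^{t_2}u_n\,dt$ are bounded in $X_0$, hence relatively compact in $X_1$. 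Simon's Kolmogorov--Riesz type criterion then gives relative compactness of $(u_n)$ in $L^{p_1}(0,\ell;X_1)$, with limit $u$. Only at that point does your Ehrling step finish the proof.

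\textbf{The endpoint $(p_1,p_2)=(\infty,1)$.} This case is not merely delicate: the statement is false there, even for $X_0=X=X_1=\mathbb{R}$. Take $u_n(t)=\phi(n(t-\ell/2))$ with $\phi$ smooth and nondecreasing, $\phi=0$ on $(-\infty,0]$ and $\phi=1$ on $[1,\infty)$. For $n\ge 2/\ell$ one has $\|u_n\|_{L^\infty(0,\ell)}\le 1$, $\|u_n'\|_{L^1(0,\ell)}=1$, and $u_n\to\mathbf{1}_{(\ell/2,\ell)}$ in $L^1(0,\ell)$. Any $L^\infty$-limit of a subsequence would have to be this jump function. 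But every continuous function is at $L^\infty$-distance at least $1/2$ from it, so no subsequence converges in $L^\infty(0,\ell)$.

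You should say plainly that the lemma as stated must exclude this pair: for $p_1=\infty$ one needs $p_2>1$, and that case your argument does cover. Your fallback claim of compactness in $L^r(0,\ell;X)$, $r<\infty$, for this pair is just the case $p_2=1$, $p_1=r$, so it inherits the gap above.
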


\begin{definition}[\cite{Temam1997,LY2020,AT2021}]
A process $\{U(t,\tau)\}_{t\geq \tau}$ on a Banach space $X$ is called $\tau$-continuous if for all $t \in \mathbb{R}$ and each $u_0 \in X$, the $X$-valued function $\tau \mapsto U(t,\tau)u_0$ (with $\tau \in (-\infty, t]$) is continuous with respect to the norm topology of $X$ and bounded in $X$ on $(-\infty, t]$.
\end{definition}

\begin{definition}[\cite{Temam1997,LY2020,AT2021}]\label{weishudingyi}
Let \(H\) be a separable real Hilbert space. For any nonempty compact subset \(K \subset H\), the fractal dimension of \(K\) is defined as
\[
d_F(K) = \limsup_{\epsilon \to 0^+} \frac{\log(N_\epsilon(K))}{\log\left(\frac{1}{\epsilon}\right)},
\]
where \(N_\epsilon(K)\) denotes the minimum number of open balls in \(H\) of radius \(\epsilon > 0\) needed to cover \(K\).
\end{definition}

\begin{lemma}[\cite{MP2002,LY2020,AT2021}]\label{weishuguji}
Let \(X\) and \(Y\) be metric spaces, and let \(f: X \to Y\) be \(\alpha\)-Hölder continuous on \(A \subset X\); then  
\[d_{F}(f(A)) \leq \frac{1}{\alpha} d_{F}(A).\]  
In particular, fractal dimension does not increase for Lipschitz continuous mappings.
\end{lemma}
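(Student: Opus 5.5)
The plan is to argue directly from \cref{weishudingyi}, comparing covering numbers of $A$ and $f(A)$. Let $L>0$ be the Hölder constant, so that $d_Y(f(x),f(y))\le L\, d_X(x,y)^{\alpha}$ for all $x,y\in A$, with $0<\alpha\le 1$. If $d_F(A)=\infty$ there is nothing to prove, so I assume $d_F(A)<\infty$. The first step is to fix $\epsilon>0$ small and take a minimal cover of $A$ by $N_\delta(A)$ open balls of radius $\delta$, where $\delta=(\epsilon/L)^{1/\alpha}$.

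The second step is to push this cover forward. A minor technical point is that the ball centers may lie outside $A$. I would handle this in one of two ways. The first is to pass to balls centered in $A$ of radius $2\delta$, using $N^{A}_{2\delta}(A)\le N_\delta(A)$ (a ball of radius $\delta$ that meets $A$ at a point $a$ is contained in the ball of radius $2\delta$ about $a$). The second is simply to note that the limsup is insensitive to a fixed factor in the radius. With centers $a_i\in A$, Hölder continuity gives
\[
f\bigl(B(a_i,2\delta)\cap A\bigr)\subset B\bigl(f(a_i),L(2\delta)^{\alpha}\bigr)=B\bigl(f(a_i),2^{\alpha}\epsilon\bigr).
\]
Hence $N_{2^\alpha\epsilon}(f(A))\le N_{\delta}(A)$.

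The third step takes logarithms and divides:
\[
\frac{\log N_{2^\alpha\epsilon}(f(A))}{\log(1/(2^{\alpha}\epsilon))}\le \frac{\log N_\delta(A)}{\log(1/\delta)}\cdot\frac{\log(1/\delta)}{\log(1/(2^\alpha\epsilon))}.
\]
Since $\log(1/\delta)=\frac1\alpha\log(1/\epsilon)+\frac1\alpha\log L$, the second factor tends to $1/\alpha$ as $\epsilon\to0^+$. Also $\delta\to0^+$ as $\epsilon\to0^+$, monotonically. Taking $\limsup$ therefore yields $d_F(f(A))\le \frac1\alpha d_F(A)$. Setting $\alpha=1$ gives the Lipschitz case.

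The only delicate point is the bookkeeping of the additive constants $\log L$ and $\log 2^\alpha$ in the limsup, together with the change of variable $\epsilon\leftrightarrow\delta$. Both are harmless because the ratio of the logarithms tends to a finite limit, so the limsup of the product equals the limsup of the first factor multiplied by $1/\alpha$. I also need compactness, or at least total boundedness, of $A$ so that the $N_\delta(A)$ are finite. This holds in the intended application, where $A$ is compact; otherwise $d_F(A)=\infty$ and the inequality is trivial.
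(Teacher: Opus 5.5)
Your proof is correct and complete. Recentering the cover at points of $A$ gives $N^{A}_{2\delta}(A)\le N_\delta(A)$, which is needed because the Hölder bound is only available on $A$. Pushing the balls forward with $\delta=(\epsilon/L)^{1/\alpha}$ then gives $N_{2^{\alpha}\epsilon}(f(A))\le N_\delta(A)$, and in the comparison of logarithms the constants $\log L$ and $\log 2^{\alpha}$ drop out of the $\limsup$. The paper gives no proof of this lemma and simply quotes it from the cited references, where it is established by this same standard covering-number argument.
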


\begin{definition}[\cite{EFNT1994,EZM2005,CE2011,CS2013}]\label{lahuizhishudingyi}
Let \(\{U(t,s)\}_{t\geq s}\) be a process on a metric space \(X\). A family \(\mathcal{M} = \{\mathcal{M}(t) : t \in \mathbb{R}\}\) is called a pullback exponential attractor for \(\{U(t,s)\}_{t\geq s}\) if
\begin{enumerate}
\item[(i)] Each \(\mathcal{M}(t) \subset X\) is non-empty and compact (for all \(t \in \mathbb{R}\));
\item[(ii)] \(\mathcal{M}\) is positively semi-invariant, meaning that
\[
U(t,s)\mathcal{M}(s) \subset \mathcal{M}(t) \quad \forall t \geq s;
\]
\item[(iii)] The fractal dimension of \(\mathcal{M}(t)\) is uniformly bounded over all \(t \in \mathbb{R}\);
\item[(iv)] \(\mathcal{M}\) exponentially pullback attracts all bounded subsets of \(X\): specifically, there exists a constant \(\omega > 0\) such that for each \(t \in \mathbb{R}\) and every bounded subset \(B \subset X\),
\[
\lim_{s \to +\infty} e^{\omega s} \operatorname{dist}\left(U(t,t - s)B, \mathcal{M}(t)\right) = 0.\]
\end{enumerate}
\end{definition}

\begin{theorem}
Suppose that $(u_{\tau}, \theta_{\tau})\in H_{1} \times H_{2}$ and $ \left(f, g\right) \in L^2_{loc}(\mathbb{R}; H_{1} \times H_{2})$. For any $\tau\in\mathbb{R}$ and $T\geq \tau$, there exists at least one weak solution $(u,\theta)$ to \cref{eq0101} on $[\tau, T]$ satisfying 
\[
\frac{d}{dt} \int_{\Omega} u \cdot \varphi dx
- \int_{\Omega} u \cdot ( u \cdot \nabla \varphi ) dx
+ \nu \int_{\Omega} ( \Lambda^{\alpha} u ) \cdot ( \Lambda^{\alpha} \varphi ) dx
= \int_{\Omega} \theta e_{3} \cdot \varphi dx
+ \int_{\Omega} f \cdot \varphi dx,
\]
and
\[
\frac{d}{dt} \int_{\Omega} \theta  \psi dx
- \int_{\Omega} \theta ( u \cdot \nabla \psi ) dx
+ \kappa \int_{\Omega} ( \Lambda^{\beta} \theta ) ( \Lambda^{\beta} \psi ) dx
= \int_{\Omega} g \psi dx,
\]
for any $ (\varphi,\psi) \in C_{\sigma}^{\infty} (\Omega) \times C^{\infty}(\Omega) $. Furthermore, for all $\tau\in\mathbb{R}$ and $T\geq\tau$,
\begin{equation*}
u\in L^{\infty}(\tau,T; H_{1})\cap L^{2}(\tau,T; H_\sigma^\alpha(\Omega)),
\quad
\theta\in L^{\infty}(\tau,T; H_{2})\cap L^{2}(\tau,T; H^{\beta}(\Omega)).
\end{equation*}
\end{theorem}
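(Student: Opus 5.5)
The plan is the standard Faedo--Galerkin scheme, built from the eigenfunctions $\{b_j\}$ of $\Lambda$ on $H_1$ and $\{a_j\}$ on $H_2$ introduced above.

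\textbf{Galerkin system.} Let $P_m,Q_m$ be the orthogonal projections onto $\mathrm{span}\{b_1,\dots,b_m\}$ and $\mathrm{span}\{a_1,\dots,a_m\}$. Look for
\[
u_m(t)=\sum_{j\le m} c_{j}(t)b_j, \qquad \theta_m(t)=\sum_{j\le m} d_{j}(t)a_j,
\]
solving the projected form of \cref{eq0201}, with initial data $P_mu_\tau$ and $Q_m\theta_\tau$. This is a finite system of ODEs with locally Lipschitz (quadratic) nonlinearity and $L^2_{loc}$-in-time forcing, so Carathéodory's theorem gives a local absolutely continuous solution. The a priori bounds below then extend it to $[\tau,T]$.

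\textbf{Uniform energy bounds.} Pair the $\theta_m$ equation with $\theta_m$ and the $u_m$ equation with $u_m$. On $\mathbb{T}^3$ with $\nabla\cdot u_m=0$, both trilinear terms vanish. Using Poincar\'e's inequality (no zero mode) and Young's inequality,
\[
\frac{d}{dt}\|\theta_m\|^2+\kappa\|\Lambda^\beta\theta_m\|^2\le C\|g\|^2 .
\]
Hence $\theta_m$ is bounded in $L^\infty(\tau,T;H_2)\cap L^2(\tau,T;H^\beta)$ by a constant depending only on $\|\theta_\tau\|$ and $\int_\tau^T\|g\|^2$. Similarly,
\[
\frac{d}{dt}\|u_m\|^2+\nu\|\Lambda^\alpha u_m\|^2\le C(\|\theta_m\|^2+\|f\|^2).
\]
Gronwall's inequality, fed with the $\theta_m$ bound, gives uniform bounds for $u_m$ in $L^\infty(\tau,T;H_1)\cap L^2(\tau,T;H^\alpha_\sigma)$.

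\textbf{Time-derivative bounds.} We bound $\partial_tu_m$ and $\partial_t\theta_m$ uniformly in $L^{p}(\tau,T;H^{-s})$ for some large $s$ and some $p>1$ (say $p=4/3$, or even $p=1$ with $s$ large). The only delicate terms are the nonlinearities. For $\varphi\in H^s$ with $s>5/2$, so that $\nabla\varphi\in L^\infty$,
\[
|(u_m\cdot\nabla u_m,\varphi)|=|(u_m\otimes u_m,\nabla\varphi)|\le\|u_m\|^2\|\nabla\varphi\|_{L^\infty}.
\]
This is bounded in $L^\infty(\tau,T)$; the term $(u_m\theta_m,\nabla\psi)$ is handled the same way. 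The linear terms $\Lambda^{2\alpha}u_m$ and $\Lambda^{2\beta}\theta_m$ are in $L^2(H^{-\alpha})$ and $L^2(H^{-\beta})$. Since $P_m$ and $Q_m$ are uniformly bounded on $H^{-s}$ (spectral projections), the derivatives are uniformly bounded in $L^2(\tau,T;H^{-s})$ with $s$ large.

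\textbf{Compactness and passage to the limit.} Apply \cref{ALlem} with $X_0=H^\alpha_\sigma$ (respectively $H^\beta$), $X=H_1$ (respectively $H_2$), $X_1=H^{-s}$, and $p_1=p_2=2$. This gives a subsequence converging strongly in $L^2(\tau,T;H_1\times H_2)$, weakly in $L^2(H^\alpha\times H^\beta)$, and weak-$*$ in $L^\infty(H_1\times H_2)$.

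The main step is the limit in the nonlinear terms:
\[
\int_\tau^T(u_m\otimes u_m,\nabla\varphi)\chi(t)\,dt\to\int_\tau^T(u\otimes u,\nabla\varphi)\chi(t)\,dt ,
\]
and likewise for $u_m\theta_m$. This follows because products of strongly $L^2(L^2)$-convergent sequences converge in $L^1(L^1)$, and $\nabla\varphi\in L^\infty$. Linear terms pass to the limit weakly. Testing the Galerkin equations with $\chi(t)\varphi$, where $\varphi$ is a finite combination of eigenfunctions and $\chi\in C_c^\infty(\tau,T)$, and using density in $C^\infty_\sigma\times C^\infty$, yields the stated weak formulations in the distributional sense in $t$.

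\textbf{Initial data.} The bound on $\partial_t u$ gives $u\in C([\tau,T];H^{-s})$ and, with the $L^\infty(H_1)$ bound, weak continuity in $H_1$. Comparing with $P_mu_\tau\to u_\tau$ identifies $u(\tau)=u_\tau$, and similarly $\theta(\tau)=\theta_\tau$. The claimed regularity follows from weak lower semicontinuity of the norms.
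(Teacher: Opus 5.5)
The paper gives no proof of this theorem. It is stated as the standard weak-existence result, and the surrounding text defers to Temam's monograph and the cited well-posedness literature. Your Faedo--Galerkin argument is exactly that standard proof and is correct: energy bounds in which both trilinear terms vanish, uniform $L^2(\tau,T;H^{-s})$ bounds on $\partial_t u_m,\partial_t\theta_m$ using that $P_m,Q_m$ are Fourier truncations, compactness via \cref{ALlem}, and $L^1(L^1)$ convergence of the quadratic terms.

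One detail should be added in the density step. A test function in $C_\sigma^\infty(\Omega)\times C^\infty(\Omega)$ may carry a nonzero constant mode, which is not in the span of the eigenfunctions. For constant $\varphi$ or $\psi$, however, every term in the weak formulations vanishes, because $u,\theta,f,g$ have zero mean and $\Lambda$ annihilates constants.
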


Given that the well-posedness of the Boussinesq equations has been extensively studied in \cite{Ye2015,JY2016}, drawing on \cite{Temam1997}, we derive the following well-posedness result.

\begin{theorem}\label{thm0203}
Suppose that $ \alpha > \frac{5}{4} $, $ \beta > \frac{5}{8} $, $(u_{\tau}, \theta_{\tau})\in H_\sigma^\alpha(\Omega) \times H^{\beta}(\Omega)$, and $ \left(f, g\right) \in L^2_{loc}(\mathbb{R}; H_{1} \times H_{2})$. For all $\tau\in\mathbb{R}$ and $T\geq\tau$, there exists a unique strong solution $(u,\theta)$ to \cref{eq0101} on $[\tau, T]$ satisfying
\[
(u,\theta)\in
C([\tau,T];H_\sigma^\alpha(\Omega)\times H^\beta(\Omega))
\cap
L^2(\tau,T;H_\sigma^{2\alpha}(\Omega)\times H^{2\beta}(\Omega)),
\]
and
\[
(u_t,\theta_t)\in L^2(\tau,T;H_1\times H_2).
\]
Moreover, this strong solution depends continuously on the initial data $(u_{\tau}, \theta_{\tau})$ with respect to the topology of $H_\sigma^\alpha(\Omega) \times H^{\beta}(\Omega)$.
\end{theorem}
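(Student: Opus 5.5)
We follow the standard Faedo--Galerkin scheme of \cite{Temam1997}. We project \cref{eq0201} onto the span of the first $m$ eigenfunctions $\{b_j\}_{j\le m}$ for $u$ and $\{a_j\}_{j\le m}$ for $\theta$. This gives a finite-dimensional ODE system for $(u_m,\theta_m)$ with locally Lipschitz nonlinearity, so it has a local solution. Uniform a priori estimates will extend it to $[\tau,T]$ and allow passage to the limit.

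\emph{Step 1: basic energy estimate.} Take the $L^2$ inner product with $(u_m,\theta_m)$. The transport terms vanish because $\nabla\cdot u_m=0$. Using $\|\theta_m\|_{L^2}\|u_m\|_{L^2}$ for the buoyancy coupling, Young's inequality and Gronwall, we obtain bounds for $(u_m,\theta_m)$ in $L^\infty(\tau,T;H_1\times H_2)\cap L^2(\tau,T;H^\alpha_\sigma\times H^\beta)$, depending only on the data. This estimate is the one behind the weak-solution theorem above.

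\emph{Step 2: higher-order estimate.} This is the main step. Test the $u$-equation with $\Lambda^{2\alpha}u_m$ and the $\theta$-equation with $\Lambda^{2\beta}\theta_m$. This yields
\[
\frac12\frac{d}{dt}\bigl(\|\Lambda^\alpha u_m\|^2+\|\Lambda^\beta\theta_m\|^2\bigr)+\nu\|\Lambda^{2\alpha}u_m\|^2+\kappa\|\Lambda^{2\beta}\theta_m\|^2
\]
bounded by the nonlinear terms plus the forcing terms. The forcing terms $(f,\Lambda^{2\alpha}u_m)$ and $(g,\Lambda^{2\beta}\theta_m)$, and the buoyancy term $(\theta_m e_3,\Lambda^{2\alpha}u_m)$, are absorbed by Young's inequality, since $\theta_m\in L^2$. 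The nonlinear terms are where the thresholds enter.

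For the velocity term, estimate $|((u_m\cdot\nabla)u_m,\Lambda^{2\alpha}u_m)|\le \|u_m\|_{L^\infty}\|\nabla u_m\|_{L^2}\|\Lambda^{2\alpha}u_m\|$. Since $\alpha>5/4$, we have $H^\alpha\hookrightarrow L^\infty$ and $\nabla u_m\in L^2$, so this is at most $C\|\Lambda^\alpha u_m\|^2\|\Lambda^{2\alpha}u_m\|$. Interpolating $\|\Lambda^\alpha u_m\|$ between $L^2$ and $H^{2\alpha}$ brings the power of $\|\Lambda^{2\alpha}u_m\|$ below $2$, so Young gives a term $C\|\Lambda^\alpha u_m\|^{2}\cdot h(t)$ with $h\in L^1$ from Step 1.

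For the temperature term, estimate $|(u_m\cdot\nabla\theta_m,\Lambda^{2\beta}\theta_m)|\le\|u_m\|_{L^\infty}\|\nabla\theta_m\|_{L^2}\|\Lambda^{2\beta}\theta_m\|$. We need $1\le2\beta$, which holds since $\beta>5/8$. Interpolate $\|\nabla\theta_m\|$ between $H^\beta$ and $H^{2\beta}$, and use $\|u_m\|_{L^\infty}\lesssim\|\Lambda^\alpha u_m\|$. The condition $\beta>5/8$ should make the resulting exponent subcritical, giving a term of the form $\phi(\|\Lambda^\alpha u_m\|)\,\|\Lambda^\beta\theta_m\|^2$.

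Closing this differential inequality with a Gronwall argument is the step I expect to be the obstacle. The coefficients must be integrable thanks to Step 1, for instance $\|\Lambda^{\alpha}u_m\|^2\in L^1$. If a direct Gronwall fails, I would first close the $u$-estimate alone, which only needs $\theta\in L^2$, and then feed the resulting $L^\infty H^\alpha\cap L^2H^{2\alpha}$ bound on $u$ into the $\theta$-estimate. This gives uniform bounds in $L^\infty(H^\alpha_\sigma\times H^\beta)\cap L^2(H^{2\alpha}_\sigma\times H^{2\beta})$. Testing with $\partial_t u_m,\partial_t\theta_m$, or bounding the equation directly, then gives $(\partial_tu_m,\partial_t\theta_m)$ bounded in $L^2(H_1\times H_2)$.

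\emph{Step 3: limit, uniqueness, continuity.} Apply \cref{ALlem} with $X_0=H^{2\alpha}_\sigma\times H^{2\beta}$, $X=H^\alpha_\sigma\times H^\beta$, $X_1=H_1\times H_2$ to extract a subsequence converging strongly in $L^2(\tau,T;X)$. This is enough to pass to the limit in the nonlinear terms. Time continuity in $H^\alpha_\sigma\times H^\beta$ follows from the Lions--Magenes lemma, since $(u,\theta)\in L^2(X_0)$ and $(u_t,\theta_t)\in L^2(X_1)$, with $X$ the interpolation midpoint. For uniqueness and continuous dependence, take two solutions with difference $(w,\eta)$. Estimate $\frac{d}{dt}(\|\Lambda^\alpha w\|^2+\|\Lambda^\beta\eta\|^2)$ by testing with $\Lambda^{2\alpha}w$ and $\Lambda^{2\beta}\eta$. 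Treat the terms $(w\cdot\nabla)u^1+(u^2\cdot\nabla)w$ and $w\cdot\nabla\theta^1+u^2\cdot\nabla\eta$ with the same embeddings as in Step 2, whose coefficients are integrable by the regularity already established. Gronwall then yields Lipschitz dependence in $H^\alpha_\sigma\times H^\beta$.
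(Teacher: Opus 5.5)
Your overall architecture is the right one: Fourier--Galerkin truncation, the $L^2$ energy estimate, a higher-order estimate closing $u$ first and then $\theta$, Aubin--Lions via \cref{ALlem}, Lions--Magenes for time continuity, and a Gronwall estimate for differences at the $H^\alpha_\sigma\times H^\beta$ level. The paper gives no proof of \cref{thm0203} beyond citing the literature, but the computations it carries out later in \cref{lem0302,lem0303,lem0306,lem0308} are exactly what your Steps 2 and 3 need.

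The gap is in how you estimate the transport terms. You invoke $H^\alpha(\mathbb{T}^3)\hookrightarrow L^\infty$ ``since $\alpha>5/4$'', but in three dimensions this embedding requires $\alpha>3/2$. The theorem covers $\alpha\in(5/4,3/2]$, and there the bound $\|u_m\|_{L^\infty}\lesssim\|\Lambda^\alpha u_m\|$ is false. You use it in the velocity estimate, in the temperature estimate, and in the difference estimate (done ``with the same embeddings''), so Step 2 and the uniqueness step do not close as written. A symptom is that the thresholds $5/4$ and $5/8$ never arise from your $L^\infty\times L^2$ bookkeeping. Your remark that $\beta>5/8$ ``should'' make the exponent subcritical is not backed by a computation.

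The repair is to pair the factors differently, as the paper does. Use H\"older in $L^{12}\times L^{12/5}\times L^2$ together with $H^{5/4}\hookrightarrow L^{12}$ and $H^{1/4}\hookrightarrow L^{12/5}$.
\begin{itemize}
\item \emph{Velocity.} This gives $\|(u\cdot\nabla)u\|_{L^2}\le C\|\Lambda^{5/4}u\|^2\le C\|\Lambda^\alpha u\|^2$. So your intermediate inequality $|((u_m\cdot\nabla)u_m,\Lambda^{2\alpha}u_m)|\le C\|\Lambda^\alpha u_m\|^2\|\Lambda^{2\alpha}u_m\|$ is true for $\alpha\ge 5/4$, just not for the reason you give, and the rest of your velocity argument survives. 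Alternatively, interpolate both $\|u\|_{L^\infty}$ and $\|\nabla u\|_{L^2}$ between $L^2$ and $H^{2\alpha}$ as in \eqref{eq0325}; this is subcritical exactly when $1+\frac{5}{4\alpha}<2$.
\item \emph{Temperature.} Here $\|u\|_{L^{12}}\|\nabla\theta\|_{L^{12/5}}\le C\|\Lambda^{5/4}u\|\,\|\Lambda^{5/4}\theta\|$. Interpolating $\|\Lambda^{5/4}\theta\|$ between $L^2$ and $H^{2\beta}$ puts total power $1+\frac{5}{8\beta}$ on $\|\Lambda^{2\beta}\theta\|$, which is below $2$ exactly when $\beta>5/8$. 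Young then leaves $C\|\Lambda^{5/4}u\|^{16\beta/(8\beta-5)}\|\theta\|^2$, which is bounded once the velocity estimate is closed (\cref{lem0303}).
\item \emph{Differences.} The same pairing handles the four transport terms of the difference system, as in the proof of \cref{lem0308}. The Gronwall coefficient is built from $\|\Lambda^{5/4}u_i\|^2$, $\|\Lambda^{5/4}u_1\|^{(16\beta-5)/(8\beta-5)}$ and $\|\Lambda^{5/4}\theta_2\|^2$. All are integrable in time because $\alpha>5/4$ and $2\beta>5/4$.
\end{itemize}
With these substitutions, the rest of your proposal goes through.
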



\begin{corollary}
Suppose that \(\alpha > \frac{5}{4}\), \(\beta > \frac{5}{8}\), and \((f, g) \in L^2_{\text{loc}}(\mathbb{R}; H_1 \times H_2)\), and let \((u_{\tau, m}, \theta_{\tau, m}) \to (u_\tau, \theta_\tau)\) in $H_\sigma^\alpha(\Omega) \times H^\beta(\Omega)$ as $m \to \infty$ (where \((u_{\tau, m}, \theta_{\tau, m}) \in H_\sigma^\alpha(\Omega) \times H^\beta(\Omega)\) is a sequence of initial data). For all $\tau \in \mathbb{R}$ and $T \geq \tau$, let \((u_m(t), \theta_m(t))\) be the unique strong solution to \cref{eq0101} on $[\tau, T]$ corresponding to \((u_{\tau, m}, \theta_{\tau, m})\). Suppose there exists a subsequence $\{(u_{m_k}(t), \theta_{m_k}(t))\}_{k=1}^\infty$ that converges $(\ast)$-weakly to some function \((u(t), \theta(t))\) in space $\{(u(t), \theta(t)) \in L^{\infty}(\tau,T; H_\sigma^{\alpha}(\Omega)\times H^{\beta}(\Omega))
\cap 
L^{2}(\tau,T; H_\sigma^{2\alpha}(\Omega)\times H^{2\beta}(\Omega)); (u_t, \theta_t) \in L^{2}(\tau, T; L^{2}(\Omega)\times L^{2}(\Omega))\}$. Then \((u(t), \theta(t))\) is the unique strong solution to \cref{eq0101} on $[\tau, T]$ with $(u(\tau), \theta(\tau)) = (u_\tau, \theta_\tau)$.
\end{corollary}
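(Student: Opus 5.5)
The plan is the standard compactness–identification argument, using the Aubin–Lions lemma (\cref{ALlem}) together with the uniqueness part of \cref{thm0203}. Denote the solution space in the statement by
\[
W=\{(u,\theta)\in L^\infty(\tau,T;H_\sigma^\alpha\times H^\beta)\cap L^2(\tau,T;H_\sigma^{2\alpha}\times H^{2\beta}):(u_t,\theta_t)\in L^2(\tau,T;L^2\times L^2)\}.
\]
Since $(u_{m_k},\theta_{m_k})$ converges ($\ast$-)weakly in $W$, the uniform boundedness principle shows that it is bounded in $W$.

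\textbf{Step 1 (strong convergence).} Apply \cref{ALlem} with $X_0=H_\sigma^{2\alpha}\times H^{2\beta}$, $X=H_\sigma^{\alpha}\times H^{\beta}$, $X_1=H_1\times H_2$ and $p_1=p_2=2$. This gives a further subsequence, still denoted $m_k$, with
\[
(u_{m_k},\theta_{m_k})\to(u,\theta)\quad\text{strongly in } L^2(\tau,T;H_\sigma^\alpha\times H^\beta).
\]
The limit coincides with the weak limit. We also use $W\subset C([\tau,T];X_1)$, and in fact $W\subset C([\tau,T];H_\sigma^\alpha\times H^\beta)$ by interpolation, since $X$ is the midpoint space between $X_0$ and $X_1$. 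Hence $(u_{m_k}(t),\theta_{m_k}(t))\rightharpoonup(u(t),\theta(t))$ weakly in $X_1$ for every $t$. At $t=\tau$ this gives $(u(\tau),\theta(\tau))=(u_\tau,\theta_\tau)$, because $(u_{\tau,m},\theta_{\tau,m})\to(u_\tau,\theta_\tau)$ in $H_\sigma^\alpha\times H^\beta$.

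\textbf{Step 2 (passing to the limit in the equations).} I test \cref{eq0201} for $(u_{m_k},\theta_{m_k})$ against $\phi(t)\varphi$ and $\phi(t)\psi$, where $\phi\in C_c^\infty(\tau,T)$ and $(\varphi,\psi)\in C_\sigma^\infty\times C^\infty$. The linear terms $\partial_t u$, $\nu\Lambda^{2\alpha}u$, $\mathbb P(\theta e_3)$ and the forcing terms pass to the limit by weak convergence alone.

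For the nonlinear terms, write
\[
u_{m}\cdot\nabla u_{m}-u\cdot\nabla u=(u_m-u)\cdot\nabla u_m+u\cdot\nabla(u_m-u).
\]
Since $\alpha>5/4$, the embeddings $H^\alpha\subset L^\infty$ and $H^\alpha\subset H^1$ hold. Therefore $\|(u_m-u)\cdot\nabla u_m\|_{L^1(\tau,T;L^2)}\le C\|u_m-u\|_{L^2(H^\alpha)}\|u_m\|_{L^2(H^\alpha)}\to0$, and the second piece is handled the same way.

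For $u_m\cdot\nabla\theta_m$ I use the divergence-free structure and move the derivative onto the test function: $\int (u_m\cdot\nabla\theta_m)\psi=-\int\theta_m\,u_m\cdot\nabla\psi$. The product $\theta_m u_m$ converges in $L^1(\tau,T;L^1)$ because both factors converge strongly in $L^2(\tau,T;L^2)$. This yields the weak formulation for $(u,\theta)$, hence \cref{eq0201} a.e. in $L^2(\tau,T;L^2)$, since every term lies in that space by the regularity of $W$.

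\textbf{Step 3 (identification).} By Step 2, $(u,\theta)\in W$ is a solution with the initial data $(u_\tau,\theta_\tau)$ in the regularity class of \cref{thm0203}. The uniqueness part of \cref{thm0203} then identifies it with the unique strong solution. Uniqueness of the limit also shows that the whole subsequence, not only a further one, converges.

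The main obstacle is the nonlinear term in Step 2, in particular the temperature transport term when $\beta$ is only slightly above $5/8$. Integrating by parts onto the smooth test function sidesteps this, so only $L^2_tL^2_x$ strong convergence is needed.
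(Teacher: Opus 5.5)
Your argument is correct once one false claim in Step 2 is repaired. In three dimensions $H^\alpha(\Omega)\subset L^\infty(\Omega)$ requires $\alpha>\frac{3}{2}$, so it fails for $\frac{5}{4}<\alpha\le\frac{3}{2}$. The bound you actually use, $\|v\cdot\nabla w\|_{L^2(\Omega)}\le C\|v\|_{H^\alpha(\Omega)}\|w\|_{H^\alpha(\Omega)}$, still holds for $\alpha\ge\frac{5}{4}$. To prove it, apply H\"older with exponents $12$ and $\frac{12}{5}$ together with $H^{5/4}(\Omega)\subset L^{12}(\Omega)$ and $H^{1/4}(\Omega)\subset L^{12/5}(\Omega)$; this is the split the paper uses in \cref{eq030703} and in the proof of \cref{lem0308}. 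The same split gives $u\cdot\nabla u\in L^\infty(\tau,T;L^2)$. Combined with $2\beta>\frac{5}{4}$, it also gives $u\cdot\nabla\theta\in L^2(\tau,T;L^2)$, which is what justifies your a.e.\ identification at the end of Step 2. With this replacement, Steps 1--3 go through as written.

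The paper gives no proof of this corollary; it simply places it after \cref{thm0203} as a consequence of that theorem. The shortest route from \cref{thm0203} uses continuous dependence rather than compactness. Let $(\bar u,\bar\theta)$ be the strong solution with data $(u_\tau,\theta_\tau)$. Continuous dependence gives $(u_m,\theta_m)(t)\to(\bar u,\bar\theta)(t)$ in $H_\sigma^\alpha(\Omega)\times H^\beta(\Omega)$. The uniform bound that comes from weak-$\ast$ convergence upgrades this to convergence in $L^2(\tau,T;H_\sigma^\alpha(\Omega)\times H^\beta(\Omega))$. Uniqueness of distributional limits then forces $(u,\theta)=(\bar u,\bar\theta)$, with no Aubin--Lions step and no passage to the limit in the nonlinearity.

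Your compactness--identification argument is longer, but it uses only the uniqueness half of \cref{thm0203}. It also uses the initial data only through weak convergence in $H_1\times H_2$. It therefore covers initial data that converge merely weakly in $H_\sigma^\alpha(\Omega)\times H^\beta(\Omega)$, provided the trajectories are bounded in the solution space. That is exactly the situation in the proof of \cref{lem0307}, where the paper appeals to \cref{thm0203} even though $e_0(\chi_{n_j})$ converges only weakly, so continuous dependence alone does not apply.
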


\section{Existence of solutions and priori estimates}\label{sec03}
In this section, we employ the $\ell$-trajectory method from \cite{MN1996} to establish the existence of the pullback attractor for the considered system. Using the well-posedness result for strong solutions in \cref{thm0203}, we define a family of continuous processes $\{U(t, \tau)\}_{t\geq\tau}$ by
\[
U(t, \tau)(u_\tau, \theta_\tau) 
= (u(t), \theta(t)) 
:= \bigl(u(t, \tau; (u_\tau, \theta_\tau)), \theta(t, \tau; (u_\tau, \theta_\tau))\bigr)
\]
generated by \cref{eq0101} in $H_\sigma^\alpha(\Omega) \times H^{\beta}(\Omega)$ for all $t \geq \tau$. This process is $(H_\sigma^\alpha(\Omega)\times H^{\beta}(\Omega),H_\sigma^\alpha(\Omega)\times H^{\beta}(\Omega))$-continuous, where $(u(t), \theta(t))$ denotes the strong solution to \cref{eq0101} with $(u_\tau, \theta_\tau)\in H_\sigma^\alpha(\Omega)\times H^{\beta}(\Omega)$. Using \cref{thm0203} together with $(f, g) \in L^2_{\text{loc}}(\mathbb{R}; H_{1} \times H_{2})$, we conclude that the process $\{U(t, \tau)\}_{t\geq\tau}$ is $\tau$-continuous.

\cref{thm0203} implies that the solution to \eqref{eq0101} with initial data in \(H_\sigma^{\alpha}(\Omega) \times H^{\beta}(\Omega)\) is unique. Thus, for any \(\tau \in \mathbb{R}\) and initial data \((u_{\tau}, \theta_{\tau}) \in H_\sigma^{\alpha}(\Omega) \times H^{\beta}(\Omega)\), there exists at most one strong solution on \([\tau, \tau+\ell]\) with the given initial data. We denote by \(C([\tau, \tau+\ell]; H_{1} \times H_{2})\) the space of strongly continuous functions from \([\tau, \tau+\ell]\) to the Banach space \(H_{1} \times H_{2}\). 

We next introduce the $\ell$-trajectory space $X_\ell$, defined as the set of all solution trajectories $\chi(s,\tau; (u_\tau, \theta_\tau)) = (u, \theta)\big|_{[\tau, \tau + \ell]} $, equipped with the topology induced by $ L^2(\tau, \tau + \ell; H_\sigma^{\alpha}(\Omega) \times H^{\beta}(\Omega))$.

The mapping $e_{t}:X_{\ell}\to H_\sigma^{\alpha}(\Omega) \times H^{\beta}(\Omega)$ is defined by
\[
e_{t}(\chi(s,\tau;(u_{\tau},\theta_{\tau})))
=\chi(\tau+t\ell,\tau;(u_{\tau},\theta_{\tau}))
\]
for any $\chi(s,\tau;(u_{\tau},\theta_{\tau}))\in X_{\ell}$ and $t \in [0,1]$.

The operators $L(t, \tau): X_{\ell} \to X_{\ell}$ are defined by
\begin{align*}
  L(t, \tau)\chi(s, \tau; (u_{\tau}, \theta_{\tau})) 
   & = \bigl(u, \theta\bigr)\bigl(t + s - \tau, \tau; (u_{\tau}, \theta_{\tau})\bigr) \\
   & = U\bigl(t + s - \tau, \tau + \ell \bigr)e_1\bigl(\chi(s, \tau; (u_{\tau}, \theta_{\tau}))\bigr) \\
   & = \chi\bigl(t + s - \tau, \tau; (u_{\tau}, \theta_{\tau})\bigr),
     \quad s \in [\tau, \tau + \ell]
\end{align*}
for any $\chi(s,\tau;(u_{\tau},\theta_{\tau}))\in X_{\ell}$, where $(u(t, \tau; (u_\tau, \theta_\tau)), \theta(t, \tau; (u_\tau, \theta_\tau)))$ is the unique solution of \cref{eq0101} with initial data $(u_{\tau}, \theta_{\tau})$. Hence, it follows that the family $\{L(t,\tau)\}_{t\geq \tau}$ forms a process on $X_{\ell}$.

In this paper, denote by \( \mathcal{D}_{\ell} \) the collection of all nonempty bounded subsets of \( X_{\ell} \), and by \( \mathcal{D} \) the collection of all nonempty bounded subsets of $H_\sigma^{\alpha}(\Omega) \times H^{\beta}(\Omega)$.

\subsection{Uniform estimates and pullback absorbing sets}\label{subsec0301}

We now derive prior estimates for solutions to \cref{eq0101}.
\begin{lemma}\label{lem0301}
Assume that \cref{ass:forcing} holds. For any $\tau \in \mathbb{R}$, there exists a positive constant $\rho_1$ such that for every bounded subset $B_{\ell} \in \mathcal{D}_{\ell}$, there exists a time $\tau_1 =\tau_1(B_{\ell}) \geq 0 $, for all strong solutions to \cref{eq0101} with short trajectory $\chi(s,\tau; (u_\tau, \theta_\tau)) \in B_{\ell}$, we have 
\[
\| u(t) \|_{H_{1}}^{2}
+ \| \theta(t) \|_{H_{2}}^{2}
+ \int_{0}^{\ell} \left(
\| \Lambda^\alpha u(t+\zeta)\|_{L^2(\Omega)}^2 + \| \Lambda^\beta \theta(t+\zeta)\|_{L^2(\Omega)}^2 \right) d\zeta
\leq \rho_1 
\]
for all $t-\tau \geq \tau_1$.
\end{lemma}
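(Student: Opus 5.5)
The argument is a standard $L^2$ energy estimate for the coupled system. Because of the one-way coupling through $\theta e_3$, I close the $\theta$-estimate first and then feed it into the $u$-estimate. Throughout, I work with strong solutions, so all pairings below are justified, and I use the uniform forcing bounds \cref{ass:forcing}.

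\textbf{Step 1 (temperature).} Take the $L^2$ inner product of the $\theta$-equation in \cref{eq0201} with $\theta$. Since $\nabla\cdot u=0$ on $\mathbb{T}^3$, the transport term satisfies $\int_\Omega (u\cdot\nabla)\theta\,\theta\,dx=0$. The zero-mean condition gives the Poincaré inequality $\|\Lambda^\beta\theta\|_{L^2}^2\geq\|\theta\|_{L^2}^2$, since $|k|\geq 1$. Using Young's inequality on $(g,\theta)$, I get
\[
\frac{d}{dt}\|\theta\|_{L^2}^2+\kappa\|\Lambda^\beta\theta\|_{L^2}^2\leq \frac{1}{\kappa}\|g\|_{L^2}^2,
\qquad\text{hence}\qquad
\frac{d}{dt}\|\theta\|_{L^2}^2+\kappa\|\theta\|_{L^2}^2\leq \frac{1}{\kappa}\|g\|_{L^2}^2 .
\]
Gronwall's inequality on $[\tau,t]$, together with the standard bound $\int_\tau^t e^{-\kappa(t-s)}\|g(s)\|^2ds\leq \frac{G_g}{1-e^{-\kappa}}$ (obtained by splitting into unit intervals), yields
\[
\|\theta(t)\|^2\leq e^{-\kappa(t-\tau)}\|\theta_\tau\|^2+C G_g .
\]

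\textbf{Step 2 (velocity).} Take the inner product of the $u$-equation with $u$. The convective term vanishes, and $\mathbb{P}$ is self-adjoint with $\mathbb{P}u=u$. Estimating $(\theta e_3+f,u)\leq \frac{\nu}{2}\|u\|^2+\frac{1}{\nu}(\|\theta\|^2+\|f\|^2)$ and using Poincaré again gives
\[
\frac{d}{dt}\|u\|^2+\nu\|\Lambda^\alpha u\|^2\leq \frac{2}{\nu}\bigl(\|\theta\|^2+\|f\|^2\bigr).
\]
I insert the Step 1 bound for $\|\theta\|^2$ and apply Gronwall with rate $\nu$. The resulting term $\int_\tau^t e^{-\nu(t-s)}e^{-\kappa(s-\tau)}ds$ is at most $(t-\tau)e^{-\min(\nu,\kappa)(t-\tau)}$, which still decays. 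The outcome is
\[
\|u(t)\|^2+\|\theta(t)\|^2\leq C(1+t-\tau)e^{-\gamma(t-\tau)}\bigl(\|u_\tau\|^2+\|\theta_\tau\|^2\bigr)+C(G_f+G_g),\qquad \gamma=\min(\nu,\kappa).
\]

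\textbf{Step 3 (integral term).} I integrate the two differential inequalities, before applying Poincaré, over $[t,t+\ell]$. This bounds $\int_0^\ell(\nu\|\Lambda^\alpha u\|^2+\kappa\|\Lambda^\beta\theta\|^2)(t+\zeta)d\zeta$ by the $L^2$ norms at time $t$ plus $C\ell\sup_{s\in[t,t+\ell]}\|\theta(s)\|^2+C(\ell+1)(G_f+G_g)$. Each of these is controlled by Step 2, since the estimate there holds at every time $s\geq t$.

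\textbf{Step 4 (absorption time).} To pass from $B_\ell$ to initial data, note that any $\chi\in B_\ell$ is a trajectory that is bounded in $L^2(\tau,\tau+\ell;H^\alpha_\sigma\times H^\beta)$, and therefore bounded in $L^2(\tau,\tau+\ell;H_1\times H_2)$. I expect this to be the only subtle point, because the $L^2$-in-time bound does not directly control $\|u_\tau\|$. I would handle it by a mean-value argument: choose $s_0\in[\tau,\tau+\ell]$ with $\|u(s_0)\|^2+\|\theta(s_0)\|^2\leq \ell^{-1}\sup_{B_\ell}\|\chi\|^2_{L^2}=:R$. Then I run Steps 1–3 from $s_0$ instead of $\tau$, using uniqueness from \cref{thm0203} so that this is the same trajectory. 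The cost is at most a shift of $\ell$ in time.

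With this in hand, I set $\rho_1:=2C(1+\ell)(G_f+G_g)+C$, which is independent of $\tau$ and $B_\ell$. I then choose $\tau_1\geq\ell$ large enough that $C(1+t-\tau)e^{-\gamma(t-\tau-\ell)}R\leq 1$ for all $t-\tau\geq\tau_1$. This gives the claimed estimate for all $t-\tau\geq\tau_1$.
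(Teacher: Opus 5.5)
Your proposal is correct and follows essentially the paper's route. Both arguments run $L^2$ energy estimates for $\theta$ and then for $u$, apply Gronwall with the unit-interval splitting of the forcing, convert the $L^2(\tau,\tau+\ell)$ bound on $B_\ell$ into pointwise control, and integrate the differential inequalities over $[t,t+\ell]$.

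The differences are minor. You pick a mean-value starting time $s_0\in[\tau,\tau+\ell]$, whereas the paper integrates the Gronwall bound over the starting time $\tau+\zeta$, $\zeta\in(0,\ell)$. Your bound $\int_\tau^t e^{-\nu(t-s)}e^{-\kappa(s-\tau)}\,ds\le (t-\tau)e^{-\min(\nu,\kappa)(t-\tau)}$ replaces the paper's two-case computation on whether $\nu\eta_1^{2\alpha}=\kappa\lambda_1^{2\beta}$.
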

\begin{proof}
Multiplying $\eqref{eq0101}_{2}$ by $\theta$ and applying Young's inequality, we get that
\begin{align*}
  \frac{1}{2}\frac{d}{dt}\|\theta(t)\|_{L^2(\Omega)}^2 + \kappa\| \Lambda^\beta \theta(t)\|_{L^2(\Omega)}^2
   & \leq \frac{\kappa\lambda_1^{2\beta}}{2} \|\theta(t)\|_{L^2(\Omega)}^2 + \frac{1}{2\kappa\lambda_1^{2\beta}} \|g(t)\|_{L^2(\Omega)}^2 \\
   & \leq \frac{\kappa}{2} \| \Lambda^\beta \theta(t)\|_{L^2(\Omega)}^2 + \frac{1}{2\kappa\lambda_1^{2\beta}} \|g(t)\|_{L^2(\Omega)}^2,
\end{align*}
since the Poincar\'{e} inequality
$$\| \Lambda^\beta \theta(t)\|_{L^2(\Omega)}^2 \geq \lambda_1^{2\beta}\|\theta(t)\|_{L^2(\Omega)}^2. $$
So we have
\begin{align}\label{eq0301}
  \frac{d}{dt}\|\theta(t)\|_{L^2(\Omega)}^2 + \frac{\kappa\lambda_1^{2\beta}}{2} \|\theta(t)\|_{L^2(\Omega)}^2 + \frac{\kappa}{2}\| \Lambda^\beta \theta(t)\|_{L^2(\Omega)}^2 
   & \leq \frac{1}{\kappa\lambda_1^{2\beta}} \|g(t)\|_{L^2(\Omega)}^2,
\end{align}
Gronwall's inequality subsequently yields that
\begin{align}\label{eq0302} 
  \|\theta(t)\|_{L^2(\Omega)}^2
   & \leq e^{\frac{\kappa\lambda_{1}^{2\beta}}{2}(\tau+\zeta-t)} \|\theta(\tau+\zeta)\|_{L^2(\Omega)}^2 + \frac{1}{\kappa\lambda_{1}^{2\beta}} \int_{\tau+\zeta}^{t} \|g(s)\|_{L^2(\Omega)}^2 e^{\frac{\kappa\lambda_{1}^{2\beta}}{2}(s-t)} ds.
\end{align}
Integrating both sides of \cref{eq0302} with respect to $\zeta$ between $0$ and $\ell$ gives 
\begin{equation}\label{eq0303}
\begin{aligned}
   \|\theta(t)\|_{L^2(\Omega)}^2
   \leq & \frac{1}{\ell} \int_{0}^{\ell} e^{\frac{\kappa\lambda_{1}^{2\beta}}{2}(\tau+\zeta-t)} \|\theta(\tau+\zeta)\|_{L^2(\Omega)}^2 d\zeta \\
   & + \frac{1}{\kappa\lambda_{1}^{2\beta}\ell} \int_{0}^{\ell} \int_{\tau+\zeta}^{t} \|g(s)\|_{L^2(\Omega)}^2 e^{\frac{\kappa\lambda_{1}^{2\beta}}{2}(s-t)} ds d\zeta \\
   \leq & \frac{1}{\ell} e^{\frac{\kappa\lambda_{1}^{2\beta}}{2}(\tau+\ell-t)}  \int_{0}^{\ell} \|\theta(\tau+\zeta)\|_{L^2(\Omega)}^2 d\zeta + \frac{1}{\kappa\lambda_{1}^{2\beta}} \left(1+\frac{2}{\kappa\lambda_{1}^{2\beta}}\right)G_g.
\end{aligned}
\end{equation}
Note that
\begin{align*}
   \int_{\tau+\zeta}^{t} \|g(s)\|_{L^2(\Omega)}^2 e^{\frac{\kappa\lambda_{1}^{2\beta}}{2}(s-t)} ds
   \leq & e^{-\frac{\kappa\lambda_{1}^{2\beta}}{2}t} \sum_{n=0}^{+\infty} \left(\int_{t-(n+1)}^{t-n} \|g(s)\|_{L^2(\Omega)}^2 e^{\frac{\kappa\lambda_{1}^{2\beta}}{2}s} ds\right) \\
   \leq & e^{-\frac{\kappa\lambda_{1}^{2\beta}}{2}t} \sum_{n=0}^{+\infty} e^{\frac{\kappa\lambda_{1}^{2\beta}}{2}(t-n)} \left(\int_{t-(n+1)}^{t-n} \|g(s)\|_{L^2(\Omega)}^2 ds\right) \\
   \leq & \left(1+\frac{2}{\kappa\lambda_{1}^{2\beta}}\right)G_g,
\end{align*}
where
$$
G_g:=\sup_{r \in \mathbb{R}}\int_{r-1}^{r}\|g(s)\|_{L^2(\Omega)}^2 ds< +\infty.
$$
For \cref{eq0303}, there exists a constant $\rho_{1}^{(1)} > 0$ such that for any $B_{\ell} \in \mathcal{D}_{\ell}$, there exists $\tau_{1}^{(1)} = \tau_{1}^{(1)}(B_{\ell}) \geq 0$ with the property that 
\begin{equation}\label{eq0317}
    \|\theta(t)\|_{L^2(\Omega)}^2
   \leq 
   \rho_{1}^{(1)},
\end{equation}
\[   
   \rho_{1}^{(1)}
   :=  
   \frac{2}{\kappa\lambda_{1}^{2\beta}} \left(1+\frac{2}{\kappa\lambda_{1}^{2\beta}}\right)G_g
\]
for all $t \in \mathbb{R}$ with $t - \tau \geq \tau_{1}^{(1)}$. Additionally, integrating \cref{eq0301} both sides between $t$ and $t+\ell$ gives
\begin{equation}\label{eq0307}
\begin{aligned}
    & \frac{\kappa\lambda_1^{2\beta}}{2} \int_{0}^{\ell} \|\theta(t+\zeta)\|_{L^2(\Omega)}^2 d\zeta 
    + 
    \frac{\kappa}{2} \int_{0}^{\ell} \| \Lambda^\beta \theta(t+\zeta)\|_{L^2(\Omega)}^2 d\zeta \\
    \leq & \frac{1}{\kappa\lambda_1^{2\beta}} \int_{t}^{t+\ell} \|g(\zeta)\|_{L^2(\Omega)}^2 d\zeta + \|\theta(t)\|_{L^2(\Omega)}^2 \\
    \leq & \frac{1}{\kappa\lambda_1^{2\beta}} \left(1+[\ell]\right)G_g
    + \frac{1}{\ell} e^{\frac{\kappa\lambda_{1}^{2\beta}}{2}(\tau+\ell-t)} \int_{0}^{\ell} \|\theta(\tau+\zeta)\|_{L^2(\Omega)}^2 d\zeta 
    + \frac{1}{\kappa\lambda_{1}^{2\beta}} \left(1+\frac{2}{\kappa\lambda_{1}^{2\beta}}\right)G_g \\
    \leq & \frac{1}{\ell} e^{\frac{\kappa\lambda_{1}^{2\beta}}{2}(\tau+\ell-t)} \int_{0}^{\ell} \|\theta(\tau+\zeta)\|_{L^2(\Omega)}^2 d\zeta 
    + \frac{1}{\kappa\lambda_{1}^{2\beta}} \left(2+\frac{2}{\kappa\lambda_{1}^{2\beta}}+[\ell]\right)G_g,
\end{aligned}
\end{equation}
where $[\ell]$ denotes the floor function of $\ell$(i.e., the greatest integer less than or equal to $\ell$).
With respect to \cref{eq0307}, there exists a constant $\rho_{1}^{(2)} > 0$ such that for any $B_{\ell} \in \mathcal{D}_{\ell}$, there exists $\tau_{1}^{(2)} = \tau_{1}^{(2)}(B_{\ell}) \geq 0$ ensuring that 
\begin{equation}\label{eq0316}
    \int_{0}^{\ell} \| \Lambda^\beta \theta(t+\zeta)\|_{L^2(\Omega)}^2 d\zeta
    \leq 
    \rho_{1}^{(2)},
\end{equation}
\[
   \rho_{1}^{(2)}
   :=
   \frac{4}{\kappa\lambda_{1}^{2\beta}\min\{\kappa\lambda_1^{2\beta},\kappa\}}
   \left(2+\frac{2}{\kappa\lambda_{1}^{2\beta}}+[\ell]\right)G_g
\]
holds for all $t \in \mathbb{R}$ whenever $t - \tau \geq \tau_{1}^{(2)}$.  

Taking the inner product of $\eqref{eq0101}_{1}$ with $u$ and the Poincar\'{e} inequality
$$\| \Lambda^\alpha u(t)\|_{L^2(\Omega)}^2 \geq \eta_1^{2\alpha}\|u(t)\|_{L^2(\Omega)}^2, $$
we obtain
\begin{align*}
   \frac{1}{2}\frac{d}{dt}\|u(t)\|_{L^2(\Omega)}^2 + \nu\|\Lambda^\alpha u(t)\|_{L^2(\Omega)}^2
   & \leq \frac{\nu\eta_1^{2\alpha}}{2} \|u(t)\|_{L^2(\Omega)}^2 + \frac{1}{\nu\eta_1^{2\alpha}} \|\theta(t)\|_{L^2(\Omega)}^2 + \frac{1}{\nu\eta_1^{2\alpha}} \|f(t)\|_{L^2(\Omega)}^2 \\
   & \leq \frac{\nu}{2} \|\Lambda^\alpha u(t)\|_{L^2(\Omega)}^2 + \frac{1}{\nu\eta_1^{2\alpha}} \left( \|\theta(t)\|_{L^2(\Omega)}^2 +    \|f(t)\|_{L^2(\Omega)}^2 \right).
\end{align*}
Therefore, we get
\begin{align}\label{eq0304}
   \frac{d}{dt}\|u(t)\|_{L^2(\Omega)}^2 + \frac{\nu\eta_1^{2\alpha}}{2} \|u(t)\|_{L^2(\Omega)}^2 + \frac{\nu}{2} \|\Lambda^\alpha u(t)\|_{L^2(\Omega)}^2
   & \leq \frac{2}{\nu\eta_1^{2\alpha}} \left( \|\theta(t)\|_{L^2(\Omega)}^2 +    \|f(t)\|_{L^2(\Omega)}^2 \right).
\end{align}
Using Gronwall's inequality and \cref{eq0303}, we obtain
\begin{align*} 
   \|u(t)\|_{L^2(\Omega)}^2
   \leq 
   & e^{\frac{\nu\eta_1^{2\alpha}}{2}(\tau+\zeta-t)} \|u(\tau+\zeta)\|_{L^2(\Omega)}^2 
   + \frac{2}{\nu\eta_1^{2\alpha}} \int_{\tau+\zeta}^{t} \|\theta(s)\|_{L^2(\Omega)}^2 e^{\frac{\nu\eta_1^{2\alpha}}{2}(s-t)} ds \\
   &+ \frac{2}{\nu\eta_1^{2\alpha}} \int_{\tau+\zeta}^{t}  \|f(s)\|_{L^2(\Omega)}^2 e^{\frac{\nu\eta_1^{2\alpha}}{2}(s-t)} ds \\
   \leq 
   & e^{\frac{\nu\eta_1^{2\alpha}}{2}(\tau+\zeta-t)} \|u(\tau+\zeta)\|_{L^2(\Omega)}^2 
   + \frac{2}{\nu\eta_1^{2\alpha}} \int_{\tau+\zeta}^{t}  \|f(s)\|_{L^2(\Omega)}^2 e^{\frac{\nu\eta_1^{2\alpha}}{2}(s-t)} ds \\
   &
   + \frac{2}{\nu\eta_1^{2\alpha}} \|\theta(\tau+\zeta)\|_{L^2(\Omega)}^2  \int_{\tau+\zeta}^{t} e^{\frac{\kappa\lambda_{1}^{2\beta}}{2}(\tau+\zeta-s)} e^{\frac{\nu\eta_1^{2\alpha}}{2}(s-t)} ds \\
   &
   + \frac{2}{\nu\eta_1^{2\alpha}\kappa\lambda_{1}^{2\beta}} \int_{\tau+\zeta}^{t} \left( \int_{\tau+\zeta}^{s} \|g(\sigma)\|_{L^2(\Omega)}^2 e^{\frac{\kappa\lambda_{1}^{2\beta}}{2}(\sigma-s)} d\sigma \right) e^{\frac{\nu\eta_1^{2\alpha}}{2}(s-t)} ds.
\end{align*}
Note that
\begin{align*}
   \int_{\tau+\zeta}^{t}  \|f(s)\|_{L^2(\Omega)}^2 e^{\frac{\nu\eta_1^{2\alpha}}{2}(s-t)} ds
   \leq \left(1+\frac{2}{\nu\eta_1^{2\alpha}}\right)G_f,
\end{align*}
where
$$G_f:=\sup_{r \in \mathbb{R}}\int_{r-1}^{r}\|f(s)\|_{L^2(\Omega)}^2 ds< +\infty.$$

\noindent{\bfseries Case 1}: $\nu\eta_1^{2\alpha} \neq \kappa\lambda_1^{2\beta}$. In addition, we get
\[
\int_{\tau+\zeta}^{t} e^{\frac{\kappa\lambda_{1}^{2\beta}}{2}(\tau+\zeta-s)} e^{\frac{\nu\eta_1^{2\alpha}}{2}(s-t)} ds = \frac{2}{\nu\eta_1^{2\alpha} - \kappa\lambda_1^{2\beta}} \left( e^{\frac{\kappa\lambda_1^{2\beta}(\tau + \zeta - t)}{2}} - e^{\frac{\nu\eta_1^{2\alpha}(\tau + \zeta - t)}{2}} \right).
\]
Hence, when  \(\nu\eta_1^{2\alpha} \neq \kappa\lambda_1^{2\beta}\) is satisfied, we get
\begin{equation}\label{eq0305}
\begin{aligned} 
   \|u(t)\|_{L^2(\Omega)}^2
   \leq 
   & e^{\frac{\nu\eta_1^{2\alpha}}{2}(\tau+\zeta-t)} \|u(\tau+\zeta)\|_{L^2(\Omega)}^2 
    \\
   &
   + \frac{4}{\nu\eta_1^{2\alpha}(\nu\eta_1^{2\alpha} - \kappa\lambda_1^{2\beta})} \|\theta(\tau+\zeta)\|_{L^2(\Omega)}^2 
   \left( e^{\frac{\kappa\lambda_1^{2\beta}(\tau + \zeta - t)}{2}} - e^{\frac{\nu\eta_1^{2\alpha}(\tau + \zeta - t)}{2}} \right) \\
   &
   + \frac{4}{\nu^{2}\eta_1^{4\alpha}\kappa\lambda_{1}^{2\beta}}
   \left(1+\frac{2}{\kappa\lambda_{1}^{2\beta}}\right)
   G_g
   + \frac{2}{\nu\eta_1^{2\alpha}} \left(1+\frac{2}{\nu\eta_1^{2\alpha}}\right)G_f;
\end{aligned}
\end{equation}
Integrating both sides above with respect to $\zeta$ from $0$ to $\ell$, we can derive that
\begin{equation}\label{eq0308}
\begin{aligned} 
   \|u(t)\|_{L^2(\Omega)}^2
   \leq 
   & \frac{1}{\ell} e^{\frac{\nu\eta_1^{2\alpha}}{2}(\tau+\ell-t)}  \int_{0}^{\ell} \|u(\tau+\zeta)\|_{L^2(\Omega)}^2 d\zeta
    \\
   &
   + \frac{4}{\nu\eta_1^{2\alpha}|\nu\eta_1^{2\alpha} - \kappa\lambda_1^{2\beta}|}  
   \frac{1}{\ell}
   e^{\frac{\min\{\nu\eta_1^{2\alpha},\kappa\lambda_1^{2\beta}\}}{2}(\tau+\ell-t)}
   \int_{0}^{\ell} 
   \|\theta(\tau+\zeta)\|_{L^2(\Omega)}^2  d\zeta \\
   &
   +\frac{4}{\nu^{2}\eta_1^{4\alpha}\kappa\lambda_{1}^{2\beta}}
   \left(1+\frac{2}{\kappa\lambda_{1}^{2\beta}}\right)
   G_g
   + \frac{2}{\nu\eta_1^{2\alpha}} \left(1+\frac{2}{\nu\eta_1^{2\alpha}}\right)G_f.
\end{aligned}
\end{equation}
For \cref{eq0308}, there exists a constant $\rho_{1}^{(3)} > 0$ such that for any $B_{\ell} \in \mathcal{D}_{\ell}$, there is a $\tau_{1}^{(3)} = \tau_{1}^{(3)}(B_{\ell}) \geq 0$ where  
\begin{equation}\label{eq0315}
    \|u(t)\|_{L^2(\Omega)}^2 
    \leq 
    \rho_{1}^{(3)},
\end{equation}
\[
    \rho_{1}^{(3)} 
    := 
    \frac{12}{\nu^{2}\eta_1^{4\alpha}\kappa\lambda_{1}^{2\beta}}
    \left(1+\frac{2}{\kappa\lambda_{1}^{2\beta}}\right)
    G_g
    + 
    \frac{6}{\nu\eta_1^{2\alpha}} \left(1+\frac{2}{\nu\eta_1^{2\alpha}}\right)G_f
\]  
for all $t \in \mathbb{R}$ satisfying $t - \tau \geq \tau_{1}^{(3)}$. By integrating both sides of $\eqref{eq0304}$ from $t$ to $t+\ell$, we have
\begin{equation}\label{eq0309}
\begin{aligned}
  &\frac{\nu\eta_1^{2\alpha}}{2} \int_{0}^{\ell} \|u(t+\zeta)\|_{L^2(\Omega)}^2 d\zeta + \frac{\nu}{2} \int_{0}^{\ell} \| \Lambda^\alpha u(t+\zeta)\|_{L^2(\Omega)}^2 d\zeta \\
  \leq 
  &
  \frac{2}{\nu\eta_1^{2\alpha}} \int_{t}^{t+\ell} \|\theta(\zeta)\|_{L^2(\Omega)}^2 d\zeta 
  +\frac{2}{\nu\eta_1^{2\alpha}} \int_{t}^{t+\ell} \|f(\zeta)\|_{L^2(\Omega)}^2 d\zeta 
  + \|u(t)\|_{L^2(\Omega)}^2 \\
  \leq 
  & 
  \frac{4}{\nu\eta_1^{2\alpha}\kappa\lambda_1^{2\beta}} \frac{1}{\ell} e^{\frac{\kappa\lambda_{1}^{2\beta}}{2}(\tau+\ell-t)} \int_{0}^{\ell} \|\theta(\tau+\zeta)\|_{L^2(\Omega)}^2 d\zeta
  + \frac{1}{\ell} e^{\frac{\nu\eta_1^{2\alpha}}{2}(\tau+\ell-t)}  \int_{0}^{\ell} \|u(\tau+\zeta)\|_{L^2(\Omega)}^2 d\zeta \\
  &
   + \frac{4}{\nu\eta_1^{2\alpha}|\nu\eta_1^{2\alpha} - \kappa\lambda_1^{2\beta}|}  
   \frac{1}{\ell}
   e^{\frac{\min\{\nu\eta_1^{2\alpha},\kappa\lambda_1^{2\beta}\}}{2}(\tau+\ell-t)}
   \int_{0}^{\ell} 
   \|\theta(\tau+\zeta)\|_{L^2(\Omega)}^2  d\zeta \\
  &
  + \frac{2}{\nu\eta_1^{2\alpha}\kappa\lambda_{1}^{2\beta}} 
  \left(1+\frac{2}{\kappa\lambda_{1}^{2\beta}}\right)
  \left(\ell+\frac{2}{\nu\eta_1^{2\alpha}}\right)
  G_g
  + \frac{2}{\nu\eta_1^{2\alpha}} \left(2+[\ell]+\frac{2}{\nu\eta_1^{2\alpha}}\right)G_f.
\end{aligned}
\end{equation}
For \cref{eq0309}, there exists a constant $\rho_{1}^{(4)} > 0$ such that for any $B_{\ell} \in \mathcal{D}_{\ell}$, there exists $\tau_{1}^{(4)} = \tau_{1}^{(4)}(B_{\ell}) \geq 0$ where   
\begin{equation}\label{eq0314}
    \int_{0}^{\ell} \| \Lambda^\alpha u(t+\zeta)\|_{L^2(\Omega)}^2 d\zeta
    \leq 
    \rho_{1}^{(4)},
\end{equation} 
with
\[
    \rho_{1}^{(4)} 
    := 
    \frac{16}{\nu\eta_1^{2\alpha}\min\{\nu\eta_1^{2\alpha},\nu\}}
    \left(
    \frac{1}{\kappa\lambda_{1}^{2\beta}} 
    \left(1+\frac{2}{\kappa\lambda_{1}^{2\beta}}\right)
    \left(\ell+\frac{2}{\nu\eta_1^{2\alpha}}\right)
    G_g
    + 
    \left(2+[\ell]+\frac{2}{\nu\eta_1^{2\alpha}}\right)G_f\right)
\]
for all $t \in \mathbb{R}$ satisfying $t - \tau \geq \tau_{1}^{(4)}$.

\noindent{\bfseries Case 2}: $\nu\eta_1^{2\alpha} = \kappa\lambda_1^{2\beta}$. By a direct calculation, we obtain
\[
\int_{\tau+\zeta}^{t} e^{\frac{\kappa\lambda_{1}^{2\beta}}{2}(\tau+\zeta-s)} e^{\frac{\nu\eta_1^{2\alpha}}{2}(s-t)} ds = (t - \tau - \zeta)e^{\frac{\nu\eta_1^{2\alpha}}{2}(\tau+\zeta - t)}.
\]
When \(\nu\eta_1^{2\alpha} = \kappa\lambda_1^{2\beta}\) holds, the result follows as 
\begin{equation}\label{eq0306}
\begin{aligned} 
   \|u(t)\|_{L^2(\Omega)}^2
   \leq 
   & e^{\frac{\nu\eta_1^{2\alpha}}{2}(\tau+\zeta-t)} \|u(\tau+\zeta)\|_{L^2(\Omega)}^2 
   \\
   &
   + \frac{2}{\nu\eta_1^{2\alpha}} \|\theta(\tau+\zeta)\|_{L^2(\Omega)}^2  (t - \tau - \zeta)e^{\frac{\nu\eta_1^{2\alpha}}{2}(\tau+\zeta - t)} \\
   &
   + \frac{4}{\nu^{3}\eta_1^{6\alpha}} 
   \left(1+\frac{2}{\nu\eta_1^{2\alpha}}\right)G_g
   + \frac{2}{\nu\eta_1^{2\alpha}} \left(1+\frac{2}{\nu\eta_1^{2\alpha}}\right)G_f.
\end{aligned}
\end{equation}
Integrating both sides above with respect to $\zeta$ from $0$ to $\ell$, we get
\begin{equation}\label{eq0310}
\begin{aligned} 
   \|u(t)\|_{L^2(\Omega)}^2
   \leq 
   & \frac{1}{\ell} e^{\frac{\nu\eta_1^{2\alpha}}{2}(\tau+\ell-t)}  \int_{0}^{\ell} \|u(\tau+\zeta)\|_{L^2(\Omega)}^2 d\zeta
    \\
   &
   + \frac{2}{\nu\eta_1^{2\alpha}}
   \frac{1}{\ell}(t-\tau)
   e^{\frac{\nu\eta_1^{2\alpha}}{2}(\tau+\ell-t)}
   \int_{0}^{\ell} 
   \|\theta(\tau+\zeta)\|_{L^2(\Omega)}^2  d\zeta \\
   &
   + \frac{4}{\nu^{3}\eta_1^{6\alpha}} 
   \left(1+\frac{2}{\nu\eta_1^{2\alpha}}\right)
   G_g
   + \frac{2}{\nu\eta_1^{2\alpha}} \left(1+\frac{2}{\nu\eta_1^{2\alpha}}\right)G_f.
\end{aligned}
\end{equation}
Since for all $ t \in \mathbb{R} $, 
\[
(t-\tau)e^{\frac{\nu\eta_1^{2\alpha}}{2}(\tau-t)}
\to 0, \quad\text{as}\quad t - \tau \to \infty, 
\]
it follows that for \cref{eq0310}, there exists a constant $\rho_{1}^{(5)} > 0$ such that for any $B_{\ell} \in \mathcal{D}_{\ell}$, there exists $\tau_{1}^{(5)} = \tau_{1}^{(5)}(B_{\ell}) \geq 0$ such that 
\begin{equation}\label{eq0313}
    \|u(t)\|_{L^2(\Omega)}^2 
    \leq 
    \rho_{1}^{(5)},
\end{equation} 
with
\[
    \rho_{1}^{(5)} 
    := \frac{12}{\nu^{3}\eta_1^{6\alpha}} 
   \left(1+\frac{2}{\nu\eta_1^{2\alpha}}\right)
   G_g
   + \frac{6}{\nu\eta_1^{2\alpha}} \left(1+\frac{2}{\nu\eta_1^{2\alpha}}\right)G_f
\]
for all $t \in \mathbb{R}$ satisfying $t - \tau \geq \tau_{1}^{(5)}$. Integrating $\eqref{eq0304}$ from $t$ to $t+\ell$, we get
\begin{equation}\label{eq0311}
\begin{aligned}
  &\frac{\nu\eta_1^{2\alpha}}{2} \int_{0}^{\ell} \|u(t+\zeta)\|_{L^2(\Omega)}^2 d\zeta + \frac{\nu}{2} \int_{0}^{\ell} \| \Lambda^\alpha u(t+\zeta)\|_{L^2(\Omega)}^2 d\zeta \\
  \leq 
  & 
  \frac{4}{\nu^{2}\eta_1^{4\alpha}} \frac{1}{\ell} e^{\frac{\nu\eta_1^{2\alpha}}{2}(\tau+\ell-t)} \int_{0}^{\ell} \|\theta(\tau+\zeta)\|_{L^2(\Omega)}^2 d\zeta
  + \frac{1}{\ell} e^{\frac{\nu\eta_1^{2\alpha}}{2}(\tau+\ell-t)}  \int_{0}^{\ell} \|u(\tau+\zeta)\|_{L^2(\Omega)}^2 d\zeta \\
  &
  + \frac{2}{\nu\eta_1^{2\alpha}}
   \frac{1}{\ell}(t-\tau)
   e^{\frac{\nu\eta_1^{2\alpha}}{2}(\tau+\ell-t)}
   \int_{0}^{\ell} 
   \|\theta(\tau+\zeta)\|_{L^2(\Omega)}^2  d\zeta \\
  &
  + \frac{2}{\nu^{2}\eta_1^{4\alpha}} 
  \left(1+\frac{2}{\nu\eta_1^{2\alpha}}\right)
  \left(\ell+\frac{2}{\nu\eta_1^{2\alpha}}\right)
  G_g
  + \frac{2}{\nu\eta_1^{2\alpha}} \left(2+[\ell]+\frac{2}{\nu\eta_1^{2\alpha}}\right)G_f.
\end{aligned}
\end{equation}
For \cref{eq0311}, there exists a constant $\rho_{1}^{(6)} > 0$ such that for any $B_{\ell} \in \mathcal{D}_{\ell}$, there exists $\tau_{1}^{(6)} = \tau_{1}^{(6)}(B_{\ell}) \geq 0$ where  
\begin{equation}\label{eq0312}
    \int_{0}^{\ell} \| \Lambda^\alpha u(t+\zeta)\|_{L^2(\Omega)}^2 d\zeta
    \leq 
    \rho_{1}^{(6)},
\end{equation}
with
\[
    \rho_{1}^{(6)} 
    := 
    \frac{16}{\nu\eta_1^{2\alpha}\min\{\nu\eta_1^{2\alpha},\nu\}}
    \left(
    \frac{1}{\nu\eta_1^{2\alpha}} 
    \left(1+\frac{2}{\nu\eta_1^{2\alpha}}\right)
    \left(\ell+\frac{2}{\nu\eta_1^{2\alpha}}\right)
    G_g
    + \left(2+[\ell]+\frac{2}{\nu\eta_1^{2\alpha}}\right)G_f
    \right)
\]
for all $t \in \mathbb{R}$ satisfying $t - \tau \geq \tau_{1}^{(6)}$.
Consequently, for any $\tau \in \mathbb{R}$, there exists a constant 
$$
\rho_1=\rho_{1}^{(1)}+\rho_{1}^{(2)}+\rho_{1}^{(3)}+
\rho_{1}^{(4)}+\rho_{1}^{(5)}+\rho_{1}^{(6)}>0
$$
such that for every $B_{\ell} \in \mathcal{D}_{\ell}$, there exists a time 
$$
\tau_1 =\tau_1(B_{\ell})=\max\{\tau_{1}^{(1)},
            \tau_{1}^{(2)},
            \tau_{1}^{(3)},
            \tau_{1}^{(4)},
            \tau_{1}^{(5)},
            \tau_{1}^{(6)}\}\geq 0 ,
$$
for all strong solutions to \cref{eq0101} with short trajectory $\chi(s,\tau; (u_\tau, \theta_\tau)) \in B_{\ell}$, we obtain
\[
\| u(t) \|_{H_{1}}^{2}
+ \| \theta(t) \|_{H_{2}}^{2}
+ \int_{0}^{\ell} \left(
\| \Lambda^\alpha u(t+\zeta)\|_{L^2(\Omega)}^2 + \| \Lambda^\beta \theta(t+\zeta)\|_{L^2(\Omega)}^2 \right) d\zeta
\leq \rho_1 
\]%
for all $t-\tau \geq \tau_1$. This completes the proof of \cref{lem0301}.
\end{proof}

\begin{lemma}\label{lem0302}
Assume that \cref{ass:forcing} holds. For any $\tau \in \mathbb{R}$, there exists a positive constant $\rho_2$ such that for every bounded subset $B_{\ell} \in \mathcal{D}_{\ell}$, there exists a time $\tau_2 =\tau_2(B_{\ell}) \geq 0 $, for all strong solutions to \cref{eq0101} with short trajectory $\chi(s,\tau; (u_\tau, \theta_\tau)) \in B_{\ell}$, we have 
\[
\| \Lambda^{\alpha} u(t)\|_{L^2(\Omega)}^{2}
+ \int_{0}^{\ell} \| \Lambda^{2\alpha} u(t+\zeta) \|_{L^2(\Omega)}^{2} d\zeta 
\leq \rho_2 
\]
for all $t-\tau \geq \tau_2$.
\end{lemma}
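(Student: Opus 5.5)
We take the $L^2$ inner product of the projected velocity equation in \cref{eq0201} with $\Lambda^{2\alpha}u$. This gives
\[
\frac12\frac{d}{dt}\|\Lambda^\alpha u\|_{L^2(\Omega)}^2+\nu\|\Lambda^{2\alpha}u\|_{L^2(\Omega)}^2
=-\int_\Omega (u\cdot\nabla)u\cdot\Lambda^{2\alpha}u\,dx+\int_\Omega \theta e_3\cdot\Lambda^{2\alpha}u\,dx+\int_\Omega f\cdot\Lambda^{2\alpha}u\,dx .
\]
The last two terms are handled by Young's inequality and are bounded by $\frac{\nu}{4}\|\Lambda^{2\alpha}u\|^2+\frac{C}{\nu}(\|\theta\|^2+\|f\|^2)$. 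After integration in time, \cref{lem0301} (through \cref{eq0317}) and the assumption $G_f<\infty$ control these contributions.

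\textbf{Main obstacle: the convective term.} We estimate it by Hölder and Sobolev:
\[
\Bigl|\int_\Omega (u\cdot\nabla)u\cdot\Lambda^{2\alpha}u\Bigr|\le \|u\|_{L^\infty}\|\nabla u\|_{L^2}\|\Lambda^{2\alpha}u\|_{L^2}.
\]
For $\alpha>\frac54$ we use $H^\alpha\hookrightarrow L^\infty$ (valid since $\alpha>\frac32$) and $\|\nabla u\|\le C\|\Lambda^\alpha u\|$. Interpolating between $H^0$ and $H^{2\alpha}$ would give a bound of the form $\|u\|^{a}\|\Lambda^{2\alpha}u\|^{1+b}$ with $b<1$. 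The cleaner route is to keep one factor as $\|\Lambda^\alpha u\|^2$: interpolate $\|u\|_{L^\infty}\le C\|u\|_{H^{3/2+\epsilon}}\le C\|u\|^{1-s}\|\Lambda^{\alpha}u\|^{s}$, which is possible since $\alpha>\frac32$, and bound $\|\nabla u\|\le \|\Lambda^\alpha u\|$. After Young's inequality this yields
\[
\frac{\nu}{4}\|\Lambda^{2\alpha}u\|^2+C\|u\|_{H^\alpha}^{2}\,\|\Lambda^\alpha u\|^{2}\ \ \text{(roughly)}.
\]
If the paper's threshold only gives $\alpha>\frac54$ (with $\alpha\le \frac32$ possible), we instead use the Gagliardo–Nirenberg bound $\|u\|_{L^\infty}\|\nabla u\|\le C\|u\|^{\gamma}\|\Lambda^{2\alpha}u\|^{\delta}\|\Lambda^\alpha u\|^{\cdots}$. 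Here $\alpha>\frac54$ is exactly the condition ensuring that the total power of $\|\Lambda^{2\alpha}u\|$ stays below $2$ when the remaining factors are measured in $\|\Lambda^\alpha u\|$ and $\|u\|$. Young then produces a term $h(t)\|\Lambda^\alpha u\|^2$ with $h=C(1+\|\Lambda^\alpha u\|^2)$. In either version the resulting inequality is
\[
\frac{d}{dt}\|\Lambda^\alpha u\|^2+\nu\|\Lambda^{2\alpha}u\|^2\le h(t)\|\Lambda^\alpha u\|^2+C(\|\theta\|^2+\|f\|^2),
\]
with $\int_t^{t+\ell}h\le C(\ell+\rho_1)$ by \cref{lem0301}.

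\textbf{Uniform Gronwall.} We apply the uniform Gronwall lemma on windows $[t,t+\ell]$, using the integral bound $\int_0^\ell\|\Lambda^\alpha u(t+\zeta)\|^2d\zeta\le\rho_1$ from \cref{lem0301}, the bound $\int_t^{t+\ell}\|f\|^2\le(1+[\ell])G_f$, and the bound $\int_t^{t+\ell}\|\theta\|^2\le\ell\rho_1$. This gives $\|\Lambda^\alpha u(t+\ell)\|^2\le C(\rho_1,G_f,\ell)$ for $t-\tau\ge\tau_1$, that is, $\|\Lambda^\alpha u(t)\|^2$ is bounded for $t-\tau\ge\tau_1+\ell$. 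Finally, integrating the differential inequality over $[t,t+\ell]$ bounds $\nu\int_0^\ell\|\Lambda^{2\alpha}u(t+\zeta)\|^2d\zeta$ by $\|\Lambda^\alpha u(t)\|^2$ plus the same integrated quantities. We then set $\rho_2$ equal to the sum of these constants and $\tau_2=\tau_1+\ell$.
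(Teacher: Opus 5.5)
Your proposal follows the paper's proof: you test with $\Lambda^{2\alpha}u$, treat the convective term by Gagliardo--Nirenberg and Young using $\alpha>\frac54$, upgrade the averaged bound $\int_0^\ell\|\Lambda^\alpha u(t+\zeta)\|_{L^2(\Omega)}^2\,d\zeta\le\rho_1$ of \cref{lem0301} to a pointwise bound by uniform-Gronwall averaging (the paper's double integration over $[t-\zeta,t]$ and $\zeta\in(0,\ell)$ is exactly this with $h\equiv 0$), and then integrate over $[t,t+\ell]$ to get the $\Lambda^{2\alpha}$ integral. The one step you leave open is the choice of exponents for $\frac54<\alpha\le\frac32$: the paper interpolates only between $\|u\|_{L^2}$ and $\|\Lambda^{2\alpha}u\|_{L^2}$, namely $\|u\|_{L^\infty}\|\nabla u\|_{L^2}\le C\|u\|_{L^2}^{\frac{8\alpha-5}{4\alpha}}\|\Lambda^{2\alpha}u\|_{L^2}^{\frac{5}{4\alpha}}$, so that Young leaves only the additive term $C\|u\|_{L^2}^{\frac{2(8\alpha-5)}{4\alpha-5}}$ and no multiplicative $h$ is needed; if instead you keep $\|\nabla u\|_{L^2}\le\|\Lambda^\alpha u\|_{L^2}$, then for $\alpha<\frac32$ the factor $h$ grows faster than $\|\Lambda^\alpha u\|_{L^2}^2$, and \cref{lem0301} does not control that.
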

\begin{proof}
Taking the $L^{2}$-inner product of $\eqref{eq0101}_{1}$ with $\Lambda^{2\alpha}u$ and integrating over $\Omega$, we obtain 
\begin{equation}\label{eq0324}
\begin{aligned}
  &\frac{1}{2}\frac{d}{dt}\| \Lambda^{\alpha}u(t) \|_{L^2(\Omega)}^{2}
  +\nu \| \Lambda^{2\alpha} u(t) \|_{L^2(\Omega)}^{2} \\
  = & -\int_{\Omega} \mathbb{P} \left( (u \cdot \nabla) u \right) \cdot \Lambda^{2\alpha} u dx 
  +\int_{\Omega} \mathbb{P} (\theta e_3) \cdot \Lambda^{2\alpha} u dx
  +\int_{\Omega} \mathbb{P} f \cdot \Lambda^{2\alpha} u dx.
\end{aligned}
\end{equation}
In the first term on the right-hand side of \cref{eq0324}, since $\alpha>\frac{5}{4}$, Gagliardo-Nirenberg inequalities hold. We thus obtain
\begin{equation}\label{eq0325}
\begin{aligned}
  \int_{\Omega} \mathbb{P} \left( (u \cdot \nabla) u \right) \cdot \Lambda^{2\alpha} u dx
  & \leq \int_{\Omega} |u|
   |\nabla u|
   |\Lambda^{2\alpha} u| dx 
   \\
  & \leq \|u(t)\|_{L^\infty(\Omega)}
  \|\nabla u(t)\|_{L^2(\Omega)}
  \|\Lambda^{2\alpha} u(t)\|_{L^2(\Omega)}
  \\
  & \leq C \|u(t)\|_{L^2(\Omega)}^{\frac{8\alpha-5}{4\alpha}}
  \|\Lambda^{2\alpha} u(t)\|_{L^2(\Omega)}^{\frac{4\alpha+5}{4\alpha}}
  \\
  & \leq \frac{\nu}{6} \|\Lambda^{2\alpha} u(t)\|_{L^2(\Omega)}^{2}+ C
  \|u(t)\|_{L^2(\Omega)}^{\frac{2(8\alpha-5)}{4\alpha-5}}.
\end{aligned}
\end{equation}
By Young's inequality, one has
\begin{equation}\label{eq0326}
\begin{aligned}
  \int_{\Omega} \mathbb{P} (\theta e_3) \cdot \Lambda^{2\alpha} u dx \leq \frac{\nu}{6} \|\Lambda^{2\alpha} u(t)\|_{L^2(\Omega)}^{2}+ \frac{3}{2\nu}
  \|\theta(t)\|_{L^2(\Omega)}^{2}.
\end{aligned}
\end{equation}
Similarly, we have
\begin{equation}\label{eq0327}
\begin{aligned}
  \int_{\Omega} \mathbb{P} f \cdot \Lambda^{2\alpha} u dx\leq \frac{\nu}{6} \|\Lambda^{2\alpha} u(t)\|_{L^2(\Omega)}^{2}+ \frac{3}{2\nu}
  \|f(t)\|_{L^2(\Omega)}^{2}.
\end{aligned}
\end{equation}
It follows from \cref{eq0325,eq0326,eq0327} that
\begin{equation}\label{eq0328}
  \frac{d}{dt}\| \Lambda^{\alpha}u(t) \|_{L^2(\Omega)}^{2}
  +\nu \| \Lambda^{2\alpha} u(t) \|_{L^2(\Omega)}^{2}
  \leq  C
  \|u(t)\|_{L^2(\Omega)}^{\frac{2(8\alpha-5)}{4\alpha-5}}
  +\frac{3}{\nu}
  \|\theta(t)\|_{L^2(\Omega)}^{2}
  +\frac{3}{\nu}
  \|f(t)\|_{L^2(\Omega)}^{2}.
\end{equation}
By computations analogous to those in \cref{eq0314,eq0312}, integrating \cref{eq0304} over $(t-\ell,t)$ yields the following: for $\alpha > \frac{5}{4}$, there exists a positive constant $\rho_{1}^{(7)}$ and a time $\tau_{2}^{(1)} \geq \tau_{1}$ such that for all $t-\tau \geq \tau_{2}^{(1)}$,
\begin{align*}
  \int_{t-\ell}^{t} \|\Lambda^\alpha u(s)\|_{L^2(\Omega)}^2 ds
  =\int_{0}^{\ell} \|\Lambda^\alpha u(t-\zeta)\|_{L^2(\Omega)}^2 d\zeta
  \leq \rho_{1}^{(7)}.
\end{align*}
Then, for any \(\zeta \in (0, \ell)\), integrating \cref{eq0328} over \([t - \zeta, t]\) and integrating the result over \((0, \ell)\) with respect to \(\zeta\), we obtain
\begin{equation}\label{eq0329}
\begin{aligned}
  \ell \| \Lambda^{\alpha} u(t)\|_{L^2(\Omega)}^{2}
  \leq & C\int_{0}^{\ell}\int_{t-\zeta}^{t}
  \|u(s)\|_{L^2(\Omega)}^{\frac{2(8\alpha-5)}{4\alpha-5}}ds d\zeta
  +\frac{3}{\nu}\int_{0}^{\ell}\int_{t-\zeta}^{t}
  \|\theta(s)\|_{L^2(\Omega)}^{2}
   ds d\zeta
  \\
  &+\frac{3}{\nu}\int_{0}^{\ell}\int_{t-\zeta}^{t}
  \|f(s)\|_{L^2(\Omega)}^{2} ds d\zeta
  +\int_{0}^{\ell} \| \Lambda^{\alpha} u(t-\zeta)\|_{L^2(\Omega)}^{2}d\zeta
  \\
  \leq & C \ell \int_{t-\ell}^{t}
  \|u(s)\|_{L^2(\Omega)}^{\frac{2(8\alpha-5)}{4\alpha-5}}
   ds 
  +\frac{3}{\nu}\ell \int_{t-\ell}^{t}
  \|\theta(s)\|_{L^2(\Omega)}^{2} ds 
  \\
  &+\frac{3}{\nu} \ell \int_{t-\ell}^{t}
  \|f(s)\|_{L^2(\Omega)}^{2} ds
  +\int_{0}^{\ell} \|\Lambda^{\alpha}  u(t-\zeta)\|_{L^2(\Omega)}^{2} d\zeta
  \\
  \leq & C \ell^{2}
  \rho_{1}^{\frac{8\alpha-5}{4\alpha-5}}
  +\frac{3}{\nu} \ell^2 \rho_{1}
  +\frac{3}{\nu} \ell (1+[\ell]) G_{f}
  +\rho_{1}^{(7)}.
\end{aligned}
\end{equation}
for all $t-\tau \geq \tau_{2}^{(1)}$. Now we obtain that
\begin{equation}\label{eq0330}
\| \Lambda^{\alpha} u(t)\|_{L^2(\Omega)}^{2}
  \leq  \rho_{2}^{(1)},
\end{equation}
where
\begin{equation}
    \rho_{2}^{(1)}= C \ell
  \rho_{1}^{\frac{8\alpha-5}{4\alpha-5}}
  +\frac{3}{\nu} \ell \rho_{1}
  +\frac{3}{\nu}  (1+[\ell]) G_{f}
  +\frac{1}{\ell}\rho_{1}^{(7)}
\end{equation}
for all $t \in \mathbb{R}$ when $t - \tau \geq \tau_{2}^{(1)}$.
Integrating both sides of $\eqref{eq0328}$ from $t$ to $t+\ell$ yields
\begin{equation}
\begin{aligned}
  \nu \int_{0}^{\ell} \| \Lambda^{2\alpha} u(t+\zeta) \|_{L^2(\Omega)}^{2} d\zeta 
  \leq & C\int_{0}^{\ell}\|u(t+\zeta)\|_{L^2(\Omega)}^{\frac{2(8\alpha-5)}{4\alpha-5}}
    d\zeta
  +\frac{3}{\nu}\int_{t}^{t+\ell} \|\theta(s)\|_{L^2(\Omega)}^{2} ds \\
  &+\frac{3}{\nu}\int_{t}^{t+\ell} \|f(s)\|_{L^2(\Omega)}^{2} ds +\|\Lambda^{\alpha} u(t)\|_{L^2(\Omega)}^{2}\\
  \leq & 
   C\ell \rho_{1}^{\frac{8\alpha-5}{4\alpha-5}}
  +\frac{3}{\nu} \ell \rho_{1}
  +\frac{3}{\nu}(1+[\ell]) G_{f}
  +\rho_{2}^{(1)}.
\end{aligned}
\end{equation}
Hence, we derive that
\begin{equation}
\int_{0}^{\ell} \| \Lambda^{2\alpha} u(t+\zeta) \|_{L^2(\Omega)}^{2} d\zeta 
  \leq \rho_{2}^{(2)},
\end{equation}
$$
\rho_{2}^{(2)}=\frac{1}{\nu}\left(C \ell \rho_{1}^{\frac{8\alpha-5}{4\alpha-5}}
  +\frac{3}{\nu} \ell \rho_{1}
  +\frac{3}{\nu}(1+[\ell]) G_{f}
  +\rho_{2}^{(1)}\right).
$$
Consequently, for any $\tau \in \mathbb{R}$, there exists a constant 
$$
\rho_2:=\rho_{2}^{(1)}+\rho_{2}^{(2)}>0
$$
such that for every $B_{\ell} \in \mathcal{D}_{\ell}$, there exists a time 
$$
\tau_2 =\tau_2(B_{\ell})=\tau_{2}^{(1)} \geq \tau_{1}\geq 0 ,
$$
for all solutions to \cref{eq0101} with short trajectory $\chi(s,\tau; (u_\tau, \theta_\tau)) \in B_{\ell}$, we obtain
\[
\| \Lambda^{\alpha} u(t)\|_{L^2(\Omega)}^{2}
+ \int_{0}^{\ell} \| \Lambda^{2\alpha} u(t+\zeta) \|_{L^2(\Omega)}^{2} d\zeta 
\leq \rho_2 
\]
for all $t-\tau \geq \tau_2$. Thus we complete the proof of \cref{lem0302}.
\end{proof}
 
\begin{lemma}\label{lem0303}
Assume that \cref{ass:forcing} holds. For any $\tau \in \mathbb{R}$, there exists a positive constant $\rho_3$ such that for every bounded subset $B_{\ell} \in \mathcal{D}_{\ell}$, there exists a time $\tau_3 =\tau_3(B_{\ell}) \geq 0 $, for all strong solutions to \cref{eq0101} with short trajectory $\chi(s,\tau; (u_\tau, \theta_\tau)) \in B_{\ell}$, we have 
\[
\| \Lambda^{\beta} \theta(t)\|_{L^2(\Omega)}^{2}
+ \int_{0}^{\ell} \| \Lambda^{2\beta} \theta(t+\zeta) \|_{L^2(\Omega)}^{2} d\zeta 
\leq \rho_3
\]
for all $t-\tau \geq \tau_3$.
\end{lemma}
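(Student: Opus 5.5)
We mirror the proof of \cref{lem0302}, now for the temperature equation. Take the $L^2$-inner product of $\eqref{eq0101}_2$ with $\Lambda^{2\beta}\theta$ to get
\[
\frac12\frac{d}{dt}\|\Lambda^\beta\theta(t)\|_{L^2(\Omega)}^2+\kappa\|\Lambda^{2\beta}\theta(t)\|_{L^2(\Omega)}^2
=-\int_\Omega (u\cdot\nabla)\theta\,\Lambda^{2\beta}\theta\,dx+\int_\Omega g\,\Lambda^{2\beta}\theta\,dx .
\]
The forcing term is absorbed by Young's inequality: it is bounded by $\frac{\kappa}{4}\|\Lambda^{2\beta}\theta\|_{L^2(\Omega)}^2+\frac1\kappa\|g\|_{L^2(\Omega)}^2$.

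The main obstacle is the transport term, since $\beta>\frac58$ alone does not let $\theta$ close by itself. We therefore use the velocity regularity already obtained in \cref{lem0302}. There $\|\Lambda^\alpha u(t)\|_{L^2(\Omega)}\le\rho_2^{1/2}$ with $\alpha>\frac54>\frac32$, so the Sobolev embedding $H^\alpha(\mathbb T^3)\hookrightarrow L^\infty$ gives $\|u(t)\|_{L^\infty(\Omega)}\le C\rho_2^{1/2}$ for $t-\tau\ge\tau_2$. It then suffices to estimate
\[
\Bigl|\int_\Omega (u\cdot\nabla)\theta\,\Lambda^{2\beta}\theta\,dx\Bigr|\le \|u\|_{L^\infty(\Omega)}\|\nabla\theta\|_{L^2(\Omega)}\|\Lambda^{2\beta}\theta\|_{L^2(\Omega)} .
\]
We control $\|\nabla\theta\|_{L^2(\Omega)}$ by interpolating between $L^2$ and $H^{2\beta}$; this needs $2\beta>1$, which holds since $\beta>\frac58$. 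This gives $\|\nabla\theta\|_{L^2(\Omega)}\le C\|\theta\|_{L^2(\Omega)}^{1-\frac1{2\beta}}\|\Lambda^{2\beta}\theta\|_{L^2(\Omega)}^{\frac1{2\beta}}$. Because $1+\frac1{2\beta}<2$, Young's inequality yields
\[
\Bigl|\int_\Omega (u\cdot\nabla)\theta\,\Lambda^{2\beta}\theta\,dx\Bigr|\le\frac\kappa4\|\Lambda^{2\beta}\theta\|_{L^2(\Omega)}^2+C\|u\|_{L^\infty(\Omega)}^{\frac{4\beta}{2\beta-1}}\|\theta\|_{L^2(\Omega)}^2 .
\]
If one prefers to avoid the pointwise bound on $u$, an alternative is to use $\|u\|_{L^\infty}\le C\|u\|_{L^2}^{1-\frac{3}{4\alpha}}\|\Lambda^{2\alpha}u\|_{L^2}^{\frac{3}{4\alpha}}$ together with the time-integrated bound in \cref{lem0302}. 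The $L^\infty$ route is simpler, so we take it.

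Combining the two estimates gives the differential inequality
\[
\frac{d}{dt}\|\Lambda^\beta\theta\|_{L^2(\Omega)}^2+\kappa\|\Lambda^{2\beta}\theta\|_{L^2(\Omega)}^2\le C\rho_2^{\frac{2\beta}{2\beta-1}}\|\theta\|_{L^2(\Omega)}^2+\frac2\kappa\|g\|_{L^2(\Omega)}^2 ,
\]
valid for $t-\tau\ge\tau_2$. We then proceed exactly as in \cref{eq0329}. From \cref{lem0301} we have $\int_{t-\ell}^{t}\|\Lambda^\beta\theta(s)\|_{L^2(\Omega)}^2ds\le\rho$ for $t-\tau$ large (apply \cref{lem0301} at time $t-\ell$). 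Integrate the inequality over $[t-\zeta,t]$, then average in $\zeta\in(0,\ell)$ (uniform Gronwall). Using $\|\theta\|_{L^2(\Omega)}^2\le\rho_1$ and $\int_{t-\ell}^t\|g\|_{L^2(\Omega)}^2\le(1+[\ell])G_g$, we get $\|\Lambda^\beta\theta(t)\|_{L^2(\Omega)}^2\le\rho_3^{(1)}$ for $t-\tau\ge\tau_3^{(1)}:=\max\{\tau_2,\tau_1\}+\ell$.

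Finally, integrate the same inequality over $[t,t+\ell]$. Together with $\rho_3^{(1)}$ this bounds $\kappa\int_0^\ell\|\Lambda^{2\beta}\theta(t+\zeta)\|_{L^2(\Omega)}^2d\zeta$ by $C\ell\rho_2^{\frac{2\beta}{2\beta-1}}\rho_1+\frac2\kappa(1+[\ell])G_g+\rho_3^{(1)}$. Setting $\rho_3$ equal to the sum of the two constants and $\tau_3=\tau_3^{(1)}$ completes the argument. Both constants are independent of $B_\ell$; only the time $\tau_3$ depends on $B_\ell$, through $\tau_1$ and $\tau_2$.
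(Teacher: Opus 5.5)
There is a genuine gap where you claim $\alpha>\frac54>\frac32$. In fact $\frac54<\frac32$, so the embedding $H^\alpha(\mathbb{T}^3)\hookrightarrow L^\infty(\Omega)$, which requires $\alpha>\frac32$ in three dimensions, is not available under the hypothesis $\alpha>\frac54$. For $\alpha\in(\frac54,\frac32]$ you have no pointwise-in-time bound $\|u(t)\|_{L^\infty(\Omega)}\le C\rho_2^{1/2}$. Your differential inequality with coefficient $C\rho_2^{\frac{2\beta}{2\beta-1}}$ is therefore unjustified.

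The fallback you mention does not repair this. Inserting $\|u\|_{L^\infty}\le C\|u\|_{L^2}^{1-\frac{3}{4\alpha}}\|\Lambda^{2\alpha}u\|_{L^2}^{\frac{3}{4\alpha}}$ into $\|u\|_{L^\infty}^{\frac{4\beta}{2\beta-1}}$ produces $\|\Lambda^{2\alpha}u\|_{L^2}$ to the power $\frac{3\beta}{\alpha(2\beta-1)}$. This power tends to $6$ as $(\alpha,\beta)\to(\frac54,\frac58)$, so it is not controlled by $\int_0^\ell\|\Lambda^{2\alpha}u(t+\zeta)\|_{L^2}^2\,d\zeta\le\rho_2$. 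A warning sign is that your route uses $\beta>\frac58$ only through $2\beta>1$.

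The missing idea is to split the derivative between the two factors so that each uses exactly the regularity available, as the paper does. Bound the transport term by $\|u\|_{L^{12}}\|\nabla\theta\|_{L^{12/5}}\|\Lambda^{2\beta}\theta\|_{L^2}$, and then:
\begin{itemize}
\item use $\|u\|_{L^{12}}\le C\|\Lambda^{5/4}u\|_{L^2}\le C\|\Lambda^{\alpha}u\|_{L^2}$, which is where $\alpha>\frac54$ enters, pointwise in time through \cref{lem0302};
\item use $\|\nabla\theta\|_{L^{12/5}}\le C\|\Lambda^{5/4}\theta\|_{L^2}\le C\|\theta\|_{L^2}^{\frac{8\beta-5}{8\beta}}\|\Lambda^{2\beta}\theta\|_{L^2}^{\frac{5}{8\beta}}$, which is where $\beta>\frac58$ enters.
\end{itemize}
Since $1+\frac{5}{8\beta}<2$, Young's inequality gives $\frac{\kappa}{4}\|\Lambda^{2\beta}\theta\|_{L^2}^2+C\|\Lambda^{5/4}u\|_{L^2}^{\frac{16\beta}{8\beta-5}}\|\theta\|_{L^2}^2$. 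The coefficient of $\|\theta\|_{L^2}^2$ is then bounded by $C\rho_2^{\frac{8\beta}{8\beta-5}}$ for $t-\tau\ge\tau_2$. With this replacement, the rest of your argument is the same as the paper's and goes through: average over $[t-\zeta,t]$ with $\zeta\in(0,\ell)$ using \cref{lem0301}, then integrate over $[t,t+\ell]$.
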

\begin{proof}
Multiplying both sides of $\eqref{eq0101}_{2}$ by $\Lambda^{2\beta}\theta$ and integrating over $\Omega$, we derive
\begin{align}\label{eq030301}
  \frac{1}{2}\frac{d}{dt}\|\Lambda^{\beta}\theta(t)\|_{L^2(\Omega)}^2 + \kappa\| \Lambda^{2\beta} \theta(t)\|_{L^2(\Omega)}^2
  = - \int_{\Omega}\left((u\cdot\nabla)\theta\right)\Lambda^{2\beta}\theta dx
  + \int_{\Omega}g\Lambda^{2\beta}\theta dx.
\end{align}
Using $\beta>\frac{5}{8}$, Gagliardo-Nirenberg inequality and Young's inequality, we find
\begin{align*}
   -\int_{\Omega}\left((u\cdot\nabla)\theta\right)\Lambda^{2\beta}\theta dx
   & \leq \int_{\Omega}|u||\nabla \theta||\Lambda^{2\beta}\theta|dx \\
   & \leq
   \|u(t)\|_{L^{12}(\Omega)}
   \|\nabla \theta(t)\|_{L^{\frac{12}{5}}(\Omega)}
   \|\Lambda^{2\beta}\theta(t)\|_{L^2(\Omega)} \\
   & \leq C
   \|\Lambda^{\frac{5}{4}}u(t)\|_{L^{2}(\Omega)}
   \|\theta(t)\|_{L^{2}(\Omega)}^{\frac{8\beta-5}{8\beta}}
   \|\Lambda^{2\beta}\theta(t)\|_{L^2(\Omega)}^{\frac{8\beta+5}{8\beta}} \\
   & \leq \frac{\kappa}{4}
   \|\Lambda^{2\beta}\theta(t)\|_{L^2(\Omega)}^{2} 
   +C
   \|\Lambda^{\frac{5}{4}}u(t)\|_{L^{2}(\Omega)}^{\frac{16\beta}{8\beta-5}}
   \|\theta(t)\|_{L^{2}(\Omega)}^{2}.
\end{align*}
It follows from Young's inequality that
\begin{align*}
\int_{\Omega}g\Lambda^{2\beta}\theta dx \leq \frac{\kappa}{4}\|\Lambda^{2\beta}\theta(t)\|_{L^2(\Omega)}^{2} 
+\frac{1}{\kappa}\|g(t)\|_{L^2(\Omega)}^{2}.
\end{align*}
Substituting the above estimates into \cref{eq030301}, there holds
\begin{align}\label{eq030302}
  \frac{d}{dt}\|\Lambda^{\beta}\theta(t)\|_{L^2(\Omega)}^2 + \kappa\| \Lambda^{2\beta} \theta(t)\|_{L^2(\Omega)}^2
  \leq C
   \|\Lambda^{\frac{5}{4}}u(t)\|_{L^{2}(\Omega)}^{\frac{16\beta}{8\beta-5}}
   \|\theta(t)\|_{L^{2}(\Omega)}^{2}
   +\frac{2}{\kappa}\|g(t)\|_{L^2(\Omega)}^{2}.
\end{align}
For any $\zeta \in (0, \ell)$, integrating \cref{eq030302} over the interval $[t - \zeta, t]$ and then integrating the result with respect to $\zeta$ over $(0, \ell)$ yields
\begin{equation}\label{eq030303}
\begin{aligned}
  \ell\|\Lambda^{\beta}\theta(t)\|_{L^2(\Omega)}^2 
  \leq&  C
   \int_{0}^{\ell}\int_{t-\zeta}^{t}
   \|\Lambda^{\frac{5}{4}}u(s)\|_{L^{2}(\Omega)}^{\frac{16\beta}{8\beta-5}}
   \|\theta(s)\|_{L^{2}(\Omega)}^{2} ds d\zeta
   \\
   &+\frac{2}{\kappa}\int_{0}^{\ell}\int_{t-\zeta}^{t}\|g(s)\|_{L^2(\Omega)}^{2}  ds d\zeta  +\int_{0}^{\ell}\|\Lambda^{\beta}\theta(t-\zeta)\|_{L^2(\Omega)}^{2}   d\zeta \\
   \leq&  C
   \ell\int_{t-\ell}^{t}
   \|\Lambda^{\frac{5}{4}}u(s)\|_{L^{2}(\Omega)}^{\frac{16\beta}{8\beta-5}}
   \|\theta(s)\|_{L^{2}(\Omega)}^{2} ds
   \\
   &+\frac{2}{\kappa}\ell\int_{t-\ell}^{t}\|g(s)\|_{L^2(\Omega)}^{2}  ds +\int_{0}^{\ell}\|\Lambda^{\beta}\theta(t-\zeta)\|_{L^2(\Omega)}^{2}   d\zeta \\
   \leq&  C
   \ell\int_{t-\ell}^{t}
   \|\Lambda^{\frac{5}{4}}u(s)\|_{L^{2}(\Omega)}^{\frac{16\beta}{8\beta-5}}
   \|\theta(s)\|_{L^{2}(\Omega)}^{2} ds
   \\
   &+\frac{2}{\kappa}\ell(1+[\ell])G_{g} +\int_{0}^{\ell}\|\Lambda^{\beta}\theta(t-\zeta)\|_{L^2(\Omega)}^{2}   d\zeta.
\end{aligned}
\end{equation}
Combining \cref{eq0317}, and then integrating \cref{eq0301} from $t-\ell$ to $t$, one can derive that there exists a constant $\rho_{1}^{(8)}>0$ and $\tau_{3}^{(1)} \geq \tau_{2}$ such that for all $t-\tau \geq \tau_{3}^{(1)}$,
\begin{align}\label{eq030304}
  \int_{t-\ell}^{t} \|\Lambda^\beta \theta(s)\|_{L^2(\Omega)}^2 ds
  =\int_{0}^{\ell} \|\Lambda^\beta \theta(t-\zeta)\|_{L^2(\Omega)}^2 d\zeta
  \leq \rho_{1}^{(8)}.
\end{align}
Combining \cref{lem0301,lem0302,eq030304}, we obtain that for all $t-\tau \geq \tau_{3}^{(1)}$,
\begin{align*}
  \ell\|\Lambda^{\beta}\theta(t)\|_{L^2(\Omega)}^2 
  \leq&C\ell^{2}\rho_{1}\rho_{2}^{\frac{8\beta}{8\beta-5}}
  +\frac{2}{\kappa}\ell(1+[\ell])G_{g} +\rho_{1}^{(8)}.
\end{align*}
Hence, we have 
\begin{align}\label{eq030305}
  \|\Lambda^{\beta}\theta(t)\|_{L^2(\Omega)}^2 
  \leq \rho_{3}^{(1)},
\end{align}
where
\begin{align*}
  \rho_{3}^{(1)}
  =
  \frac{1}{\ell}
  \left( C\ell^{2}\rho_{1}\rho_{2}^{\frac{8\beta}{8\beta-5}}
  +\frac{2}{\kappa}\ell(1+[\ell])G_{g} +\rho_{1}^{(8)}
  \right),
\end{align*}
for all $t-\tau \geq \tau_{3}^{(1)}$.
Integrating \cref{eq030302} from $t$ to $t+\ell$ yields
\begin{align*}
  & \kappa\int_{0}^{\ell}\|\Lambda^{2\beta} \theta(t+\zeta)\|_{L^2(\Omega)}^2 d\zeta \\
  \leq & C
  \int_{t}^{t+\ell}
   \|\Lambda^{\frac{5}{4}}u(s)\|_{L^{2}(\Omega)}^{\frac{16\beta}{8\beta-5}}
   \|\theta(s)\|_{L^{2}(\Omega)}^{2} ds
   +\frac{2}{\kappa}
   \int_{t}^{t+\ell}\|g(s)\|_{L^2(\Omega)}^{2} ds
   +\|\Lambda^{\beta}\theta(t)\|_{L^2(\Omega)}^2 \\
  \leq & C \ell
  \rho_{1}\rho_{2}^{\frac{8\beta}{8\beta-5}}
   +\frac{2}{\kappa}(1+[\ell])G_{g} 
   +\rho_{3}^{(1)}.
\end{align*}
Hence by a direct computation, we have
\begin{align}\label{eq030306}
 \int_{0}^{\ell}\|\Lambda^{2\beta} \theta(t+\zeta)\|_{L^2(\Omega)}^2 d\zeta \leq \rho_{3}^{(2)},
\end{align}
where
\begin{align*}
   \rho_{3}^{(2)} = \frac{1}{\kappa} \left( C \ell
  \rho_{1}\rho_{2}^{\frac{8\beta}{8\beta-5}}
   +\frac{2}{\kappa}(1+[\ell])G_{g} 
   +\rho_{3}^{(1)} \right).
\end{align*}
For any $\tau \in \mathbb{R}$, there exists a constant 
$$
\rho_3=\rho_{3}^{(1)}+\rho_{3}^{(2)}>0
$$
such that for all $B_{\ell} \in \mathcal{D}_{\ell}$, there exists a time 
$$
\tau_3 =\tau_3(B_{\ell})=\tau_{3}^{(1)} \geq \tau_{2}\geq 0 ,
$$
at which solutions to equation \cref{eq0101} with a short trajectory $\chi(s,\tau; (u_\tau, \theta_\tau)) \in B_{\ell}$ satisfies 
\[
\| \Lambda^{\beta} \theta(t)\|_{L^2(\Omega)}^{2}
+ \int_{0}^{\ell} \| \Lambda^{2\beta} \theta(t+\zeta) \|_{L^2(\Omega)}^{2} d\zeta 
\leq \rho_3
\]
for all $t-\tau \geq \tau_3$. This concludes the proof of \cref{lem0303}.
\end{proof}

\begin{lemma}\label{lem0304}
Assume that \cref{ass:forcing} holds. For any $\tau \in \mathbb{R}$, there exists a constant $\rho_{4}>0$ such that for every bounded subset $B_{\ell} \in \mathcal{D}_{\ell}$, there exists a time $\tau_4 =\tau_4(B_{\ell}) \geq 0 $, for all strong solutions to \cref{eq0101} with short trajectory $\chi(s,\tau; (u_\tau, \theta_\tau)) \in B_{\ell}$, we have 
\[
\| \Lambda^{\alpha} u(t)\|_{L^2(\Omega)}^{2}
+ \| \Lambda^{\beta} \theta(t)\|_{L^2(\Omega)}^{2}
+ \int_{0}^{\ell}\left( \| \Lambda^{2\alpha} u(t+\zeta) \|_{L^2(\Omega)}^{2} + \| \Lambda^{2\beta} \theta(t+\zeta) \|_{L^2(\Omega)}^{2} \right)  d\zeta 
\leq \rho_{4}
\]
for all $t-\tau \geq \tau_{4}$.
\end{lemma}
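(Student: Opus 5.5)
This statement is a direct combination of \cref{lem0302} and \cref{lem0303}, so no new energy estimate is needed. I would set
\[
\rho_4 := \rho_2 + \rho_3, \qquad \tau_4 = \tau_4(B_\ell) := \max\{\tau_2(B_\ell), \tau_3(B_\ell)\}.
\]
In the proof of \cref{lem0303} one already has $\tau_3 \geq \tau_2$, so in fact $\tau_4 = \tau_3$.

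The argument runs as follows. Fix $\tau \in \mathbb{R}$ and $B_\ell \in \mathcal{D}_\ell$, and take a strong solution whose short trajectory $\chi(s,\tau;(u_\tau,\theta_\tau))$ lies in $B_\ell$. For $t-\tau \geq \tau_4$ we have both $t-\tau \geq \tau_2$ and $t-\tau \geq \tau_3$.

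\cref{lem0302} then gives
\[
\|\Lambda^\alpha u(t)\|_{L^2(\Omega)}^2 + \int_0^\ell \|\Lambda^{2\alpha}u(t+\zeta)\|_{L^2(\Omega)}^2\,d\zeta \leq \rho_2 .
\]
\cref{lem0303} gives the analogous bound by $\rho_3$ for $\|\Lambda^\beta\theta(t)\|_{L^2(\Omega)}^2 + \int_0^\ell \|\Lambda^{2\beta}\theta(t+\zeta)\|_{L^2(\Omega)}^2\,d\zeta$. Adding the two inequalities and grouping the integrals under a single $\int_0^\ell$ yields the claimed bound with constant $\rho_4$.

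Two points need checking. First, $\rho_2$ and $\rho_3$ must be independent of $B_\ell$. This holds because they depend only on $\nu,\kappa,\alpha,\beta,\ell,\eta_1,\lambda_1,G_f,G_g$, through $\rho_1$ and the auxiliary constants $\rho_1^{(7)},\rho_1^{(8)}$. Only the entry times depend on $B_\ell$. Second, both lemmas must apply to the same class of solutions, namely those with short trajectory in $B_\ell$, which they do.

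I expect essentially no obstacle here; the only mild subtlety is this bookkeeping of which quantities depend on $B_\ell$. The constant $\rho_4$ is what makes the set of trajectories satisfying this bound a bounded pullback absorbing set in the stronger norm $L^2(\tau,\tau+\ell;H_\sigma^{2\alpha}\times H^{2\beta})$.
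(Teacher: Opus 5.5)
Your proposal is correct and follows the paper's own proof: the paper also sets $\rho_4:=\rho_2+\rho_3$, takes an entry time $\tau_4\geq\tau_3$ (which already dominates $\tau_2$), and adds the bounds from \cref{lem0302} and \cref{lem0303}. Your remark that only the entry times, not $\rho_2$ and $\rho_3$, depend on $B_\ell$ is the right bookkeeping point and matches how those lemmas are stated.
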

\begin{proof}
Combining \cref{lem0302,lem0303}, we find that for $\rho_{4}:=\rho_{2}+\rho_{3}$, there exists $\tau_{4} \geq \tau_{3}$ such that \cref{lem0304} holds.
\end{proof}

From \cref{lem0304}, we deduce the following \cref{cor01}.

\begin{corollary}\label{cor01}
Assume that \cref{ass:forcing} holds. For any $\tau \in \mathbb{R}$, let $\rho_{4}>0$ be the constant from \cref{lem0304}. For every bounded subset $B \in \mathcal{D}$, there exists a time $\tau_4' =\tau_4'(B) \geq 0 $, for all strong solutions to \cref{eq0101} with $ (u_\tau, \theta_\tau) \in B$, we have 
\[
\| \Lambda^{\alpha} u(t)\|_{L^2(\Omega)}^{2}
+ \| \Lambda^{\beta} \theta(t)\|_{L^2(\Omega)}^{2}
+ \int_{0}^{\ell}\left( \| \Lambda^{2\alpha} u(t+\zeta) \|_{L^2(\Omega)}^{2} + \| \Lambda^{2\beta} \theta(t+\zeta) \|_{L^2(\Omega)}^{2} \right)  d\zeta 
\leq \rho_{4}
\]
for all $t-\tau \geq \tau_4'$.
\end{corollary}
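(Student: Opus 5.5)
The corollary transfers \cref{lem0304} from bounded sets of short trajectories to bounded sets of initial data. The plan is to show that every $B\in\mathcal{D}$ generates, through its short trajectories, a set $B_\ell\in\mathcal{D}_\ell$. Then \cref{lem0304} applies to $B_\ell$ and gives the same constant $\rho_4$, with $\tau_4'(B):=\tau_4(B_\ell)$.

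Fix $B\in\mathcal{D}$, so $\|\Lambda^\alpha u_\tau\|_{L^2(\Omega)}^2+\|\Lambda^\beta\theta_\tau\|_{L^2(\Omega)}^2\le R$ for all $(u_\tau,\theta_\tau)\in B$. Define
\[
B_\ell:=\bigl\{\chi(s,\tau;(u_\tau,\theta_\tau))=(u,\theta)\big|_{[\tau,\tau+\ell]} : (u_\tau,\theta_\tau)\in B\bigr\}\subset X_\ell .
\]
We need $\sup_{B_\ell}\int_\tau^{\tau+\ell}\bigl(\|\Lambda^\alpha u(s)\|^2+\|\Lambda^\beta\theta(s)\|^2\bigr)ds<\infty$. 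We get this by rerunning the differential inequalities already derived, now from time $\tau$ over the finite interval $[\tau,\tau+\ell]$, with no absorption argument.

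The steps are as follows.

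\textbf{Step 1: $L^2$ bounds.} Integrate \cref{eq0301} and \cref{eq0304} on $[\tau,\tau+\ell]$ and apply Poincar\'e. This bounds $\sup_{[\tau,\tau+\ell]}\bigl(\|u\|^2+\|\theta\|^2\bigr)$ by $C(R+\ell G_f+\ell G_g)$.

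\textbf{Step 2: velocity in $H^\alpha$.} Use \cref{eq0328}. Its right-hand side involves only powers of $\|u\|_{L^2}$, $\|\theta\|_{L^2}$ and $\|f\|^2$. Integrating from $\tau$ therefore gives
\[
\sup_{[\tau,\tau+\ell]}\|\Lambda^\alpha u\|^2+\nu\int_\tau^{\tau+\ell}\|\Lambda^{2\alpha}u\|^2\le R+C\ell(1+R)^{\frac{8\alpha-5}{4\alpha-5}}+C(1+[\ell])(G_f+G_g).
\]

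\textbf{Step 3: temperature in $H^\beta$.} Use \cref{eq030302} with the Gronwall-free integrated form. Since $\alpha>\frac54$, we have $\|\Lambda^{5/4}u\|\le C\|\Lambda^\alpha u\|$. The factor $\|\Lambda^{5/4}u\|^{16\beta/(8\beta-5)}\|\theta\|^2$ is therefore bounded uniformly on $[\tau,\tau+\ell]$ by Steps 1--2. Integrating gives a bound on $\sup_{[\tau,\tau+\ell]}\|\Lambda^\beta\theta\|^2$ depending only on $R$, $\ell$, $G_f$, $G_g$.

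These steps make $B_\ell$ bounded in $L^\infty(\tau,\tau+\ell;H_\sigma^\alpha\times H^\beta)$, hence in $X_\ell$, so $B_\ell\in\mathcal{D}_\ell$. Every strong solution with initial datum in $B$ has its short trajectory in $B_\ell$, so \cref{lem0304} yields the stated estimate for $t-\tau\ge\tau_4(B_\ell)=:\tau_4'$.

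The main obstacle is justifying the energy identities for strong solutions near $t=\tau$. This is handled by the regularity in \cref{thm0203}: $(u_t,\theta_t)\in L^2(H_1\times H_2)$ and $(u,\theta)\in L^2(H^{2\alpha}\times H^{2\beta})$, so the products with $\Lambda^{2\alpha}u$ and $\Lambda^{2\beta}\theta$ are legitimate and the Lions--Magenes lemma applies. A secondary subtlety is that the time $\tau_4$ from \cref{lem0304} depends on $B_\ell$ only through its bound in $X_\ell$, so $\tau_4'$ depends only on $R$, as required.
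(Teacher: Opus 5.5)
Your proposal is correct and follows the paper's route: the paper states the corollary as an immediate consequence of \cref{lem0304} without further argument. Your Steps 1--3 supply the check the paper leaves implicit, namely that the short trajectories issuing from a bounded $B\in\mathcal{D}$ form a set $B_\ell\in\mathcal{D}_\ell$, so \cref{lem0304} applies with the same $\rho_4$ and $\tau_4':=\tau_4(B_\ell)$.
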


For $\alpha>\frac{5}{4}$ and $\beta>\frac{5}{8}$, define the set
\[
B_0 = \left\{ (u, \theta) \in H_\sigma^{\alpha}(\Omega) \times H^\beta(\Omega) : \|u\|_{H^{\alpha}(\Omega)}^2 + \|\theta\|_{H^\beta(\Omega)}^2 \leq \rho_4 \right\}.
\]
Then \( B_0 \in \mathcal{D} \). For all $t \in \mathbb{R}$, by \cref{cor01}, there exists \( \tau_0 = \tau_0(B_0) \geq 0 \) such that for all \( t - \tau \geq \tau_0 \) and any initial data \( (u_\tau, \theta_\tau) \in B_0 \),  
\[
U(t, \tau) B_0 \subset B_0.
\]
For any $t\in \mathbb{R}$, define
\[
B_1(t) = \overline{\bigcup_{\substack{\tau \in [t - \tau_0, t],\ s \in [t - \tau_0, t]}} \left\{ U(s, \tau)(u_\tau, \theta_\tau) : \forall\ (u_\tau, \theta_\tau) \in B_0 \right\}}^{H_\sigma^{\alpha}(\Omega) \times H^{\beta}(\Omega)},
\]
and
\[
\mathcal{B}_0^\ell(t) = \left\{ \chi \in X_\ell : e_0(\chi) \in B_1(t) \right\}.
\]
Specifically, for $\tau = t$, we have 
$$
B_1(\tau) = \left\{ (u_\tau, \theta_\tau) : \forall\ (u_\tau, \theta_\tau) \in B_0 \right\},\quad
\mathcal{B}_0^\ell(\tau) = \left\{ \chi \in X_\ell : e_0(\chi) \in B_1(\tau) \right\}.
$$
It follows from \cref{cor01} that
\[
U(t, \tau) B_1(\tau) \subset B_1(t),
\]
and
\[
L(t, \tau) \mathcal{B}_0^\ell(\tau) \subset \mathcal{B}_0^\ell(t),
\]
for any $t \in \mathbb{R}$ with $\tau \leq t$, and $B_1(t) \in \mathcal{D}$.

\begin{lemma}\label{lem0306}
Assume that \cref{ass:forcing} holds. For any $\tau \in \mathbb{R}$, there exists a constant $\rho_5 >0$ such that for every bounded subset $B_{\ell} \in \mathcal{D}_{\ell}$, there exists a time $\tau_5 =\tau_5(B_{\ell}) \geq 0 $, for all strong solutions to \cref{eq0101} with short trajectory $\chi(s,\tau; (u_\tau, \theta_\tau)) \in B_{\ell}$, we have 
$$
\int_{0}^{\ell}\left( 
\|u_t(t+\zeta) \|_{L^{2}(\Omega)} ^{2}
+ 
\|\theta_t(t+\zeta) \|_{L^{2}(\Omega)}^{2} \right)  d\zeta 
\leq \rho_{5}
$$
for all $t-\tau \geq \tau_{5}$.
\end{lemma}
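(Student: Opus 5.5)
The plan is to read $u_t$ and $\theta_t$ off the equations in \cref{eq0201}: bound each term on the right-hand side in $L^2(\Omega)$, then integrate over $(t,t+\ell)$. The uniform bounds of \cref{lem0301,lem0304} supply everything we need. Take $\tau_5:=\tau_4$ (which is $\geq\tau_1$), so that for $t-\tau\ge\tau_5$ we have at our disposal $\sup_{s\in[t,t+\ell]}(\|\Lambda^\alpha u(s)\|^2+\|\Lambda^\beta\theta(s)\|^2)\le\rho_4$, $\int_t^{t+\ell}(\|\Lambda^{2\alpha}u\|^2+\|\Lambda^{2\beta}\theta\|^2)\,ds\le\rho_4$, and $\|u\|^2+\|\theta\|^2\le\rho_1$. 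The pointwise bounds hold on all of $[t,t+\ell]$ because each point $s$ also satisfies $s-\tau\ge\tau_4$.

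For the velocity we have $u_t=-\nu\Lambda^{2\alpha}u-\mathbb{P}((u\cdot\nabla)u)+\mathbb{P}(\theta e_3)+\mathbb{P}f$. Since $\mathbb{P}$ is a contraction on $L^2$,
\[
\|u_t\|_{L^2(\Omega)}^2\le 4\nu^2\|\Lambda^{2\alpha}u\|_{L^2(\Omega)}^2+4\|u\|_{L^\infty(\Omega)}^2\|\nabla u\|_{L^2(\Omega)}^2+4\|\theta\|_{L^2(\Omega)}^2+4\|f\|_{L^2(\Omega)}^2 .
\]
Because $\alpha>\frac54>\frac32$, Sobolev embedding gives $\|u\|_{L^\infty}\le C\|\Lambda^\alpha u\|$, and $\|\nabla u\|\le C\|\Lambda^\alpha u\|$ follows from Poincaré on the zero-mean space. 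The nonlinear term is therefore bounded by $C\rho_4^2$ pointwise in time. Integrating over $(t,t+\ell)$ then gives
\[
\int_0^\ell\|u_t(t+\zeta)\|_{L^2(\Omega)}^2\,d\zeta\le 4\nu^2\rho_4+C\ell\rho_4^2+4\ell\rho_1+4(1+[\ell])G_f .
\]

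For the temperature we have $\theta_t=-\kappa\Lambda^{2\beta}\theta-(u\cdot\nabla)\theta+g$. The transport term is the only step that needs care, since $\nabla\theta$ is not controlled pointwise in time when $\beta<1$. We estimate it as in the proof of \cref{lem0303}:
\[
\|(u\cdot\nabla)\theta\|_{L^2(\Omega)}\le\|u\|_{L^{12}(\Omega)}\|\nabla\theta\|_{L^{12/5}(\Omega)}\le C\|\Lambda^{5/4}u\|_{L^2(\Omega)}\|\theta\|_{L^2(\Omega)}^{\frac{8\beta-5}{8\beta}}\|\Lambda^{2\beta}\theta\|_{L^2(\Omega)}^{\frac{8\beta+5}{8\beta}} .
\]
A simpler route is $\|u\|_{L^\infty}\|\nabla\theta\|_{L^2}$, using $\|\nabla\theta\|\le C\|\Lambda^{2\beta}\theta\|$, which holds since $2\beta>\frac54>1$. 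Either route gives
\[
\|(u\cdot\nabla)\theta\|_{L^2(\Omega)}^2\le C\rho_4\|\Lambda^{2\beta}\theta\|_{L^2(\Omega)}^2,
\]
whose integral over $(t,t+\ell)$ is at most $C\rho_4^2$. Adding $\kappa^2\rho_4$ and $(1+[\ell])G_g$ bounds $\int_0^\ell\|\theta_t(t+\zeta)\|^2\,d\zeta$.

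Summing the two estimates gives the constant $\rho_5$, which depends only on $\nu,\kappa,\ell,G_f,G_g,\rho_1,\rho_4$ and not on $B_\ell$. The main point to verify is that every term appearing in $\theta_t$ has exponent at most 2 on $\Lambda^{2\beta}\theta$, so that it is controlled by the time-integrated bound of \cref{lem0304} rather than a pointwise one; the $L^\infty$–$L^2$ splitting above achieves this.
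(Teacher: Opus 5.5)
Your overall strategy is exactly the paper's: read $u_t,\theta_t$ off \cref{eq0201}, bound each term in $L^2(\Omega)$, integrate over $(t,t+\ell)$, and invoke \cref{lem0301,lem0304} with $\tau_5=\tau_4$. There is, however, a genuine error in the embedding step.

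\textbf{The gap.} You write $\alpha>\frac54>\frac32$, but $\frac54<\frac32$. For $\alpha\in(\frac54,\frac32]$ the embedding $H^\alpha(\Omega)\hookrightarrow L^\infty(\Omega)$ fails, so $\|u\|_{L^\infty}\le C\|\Lambda^\alpha u\|$ is not available.

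For the velocity this is easy to repair inside your own splitting. Use $\|u\|_{L^\infty}\le C\|\Lambda^{2\alpha}u\|$, valid since $2\alpha>\frac52$, together with $\|\nabla u\|\le C\|\Lambda^\alpha u\|$. The bound then only holds after time integration, not pointwise, which is enough.

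For the temperature the error breaks the step you single out as the key point. Here $\|u\|_{L^\infty}$ is controlled only by $\|\Lambda^{2\alpha}u\|$, and $\|\nabla\theta\|$ only by $\|\Lambda^{2\beta}\theta\|$ (since $\beta$ may be $<1$). So the $L^\infty$--$L^2$ splitting yields $\|\Lambda^{2\alpha}u\|^2\|\Lambda^{2\beta}\theta\|^2$. This is a product of two quantities that are merely integrable in time, and \cref{lem0304} does not control it without a further interpolation argument.

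Your alternative route does not rescue this as written. The displayed bound for $\|u\|_{L^{12}}\|\nabla\theta\|_{L^{12/5}}$ copies the trilinear estimate from the proof of \cref{lem0303}. There the exponent $\frac{8\beta+5}{8\beta}$ already includes the test-function factor $\|\Lambda^{2\beta}\theta\|$. Your inequality is therefore not homogeneous in $\theta$ (degree $1$ on the left, degree $2$ on the right), hence false. Squaring it would also put an exponent $>2$ on $\|\Lambda^{2\beta}\theta\|$.

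\textbf{The fix.} Use the paper's H\"older splitting $L^{12}\times L^{12/5}$ with the three-dimensional embeddings $H^{5/4}\hookrightarrow L^{12}$ and $H^{1/4}\hookrightarrow L^{12/5}$. Pointwise in time this gives
\[
\|(u\cdot\nabla)u\|_{L^2(\Omega)}\le C\|\Lambda^{5/4}u\|_{L^2(\Omega)}^2\le C\|\Lambda^{\alpha}u\|_{L^2(\Omega)}^2,
\qquad
\|(u\cdot\nabla)\theta\|_{L^2(\Omega)}\le C\|\Lambda^{5/4}u\|_{L^2(\Omega)}\|\Lambda^{5/4}\theta\|_{L^2(\Omega)}\le C\|\Lambda^{\alpha}u\|_{L^2(\Omega)}\|\Lambda^{2\beta}\theta\|_{L^2(\Omega)}.
\]
These use exactly the hypotheses $\alpha>\frac54$ and $2\beta>\frac54$. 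The second bound squares to $C\rho_4\|\Lambda^{2\beta}\theta\|^2$, and the rest of your argument then goes through unchanged.

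Your first temperature route also works once the exponent on $\|\Lambda^{2\beta}\theta\|$ is corrected from $\frac{8\beta+5}{8\beta}$ to $\frac{5}{8\beta}$. The squared power is then $\frac{5}{4\beta}<2$, which is controlled by H\"older in time.
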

\begin{proof}
We obtain by \cref{eq0201}
\begin{equation}
\begin{cases}
u_t = - \nu \Lambda^{2\alpha} u - \mathbb{P} \left( (u \cdot \nabla) u \right) + \mathbb{P} (\theta e_3) + \mathbb{P} f, \\
\theta_t = - \kappa \Lambda^{2\beta} \theta - (u \cdot \nabla) \theta + g.
\end{cases}
\end{equation}
By H\"{o}lder’s inequality, $\alpha>\frac{5}{4}$, and $\beta>\frac{5}{8}$, together with the definition of dual norm, we derive that 
\begin{equation}\label{eq030703}
\begin{aligned}
\|u_t\|_{L^{2}(\Omega)}
\leq &
\nu \| \Lambda^{2\alpha} u \|_{L^2(\Omega)} 
+ \|(u \cdot \nabla) u \|_{L^{2}(\Omega)} 
+\| \theta \|_{L^{2}(\Omega)} 
+\| f \|_{L^{2}(\Omega)} \\
\leq &
\nu \| \Lambda^{2\alpha} u \|_{L^2(\Omega)} 
+\|u \|_{L^{12}(\Omega)} 
\| \nabla u \|_{L^{\frac{12}{5}}(\Omega)} 
+\| \theta \|_{L^{2}(\Omega)} 
+\| f \|_{L^{2}(\Omega)} \\
\leq &
\nu \| \Lambda^{2\alpha} u \|_{L^2(\Omega)} 
+C\| \Lambda^{\alpha} u \|_{L^{2}(\Omega)}^{2}
+\| \theta \|_{L^{2}(\Omega)} 
+\| f \|_{L^{2}(\Omega)},
\end{aligned}
\end{equation}
and
\begin{equation}\label{eq030704}
\begin{aligned}
\| \theta_t \|_{L^2(\Omega)}
\leq &
\kappa \| \Lambda^{2\beta} \theta  \|_{L^2(\Omega)}
+ \|(u \cdot \nabla) \theta \|_{L^{2}(\Omega)}
+ \| g \|_{L^{2}(\Omega)}\\
\leq &
\kappa \| \Lambda^{2\beta} \theta  \|_{L^2(\Omega)}
+ \|u \|_{L^{12}(\Omega)} \| \nabla \theta \|_{L^{\frac{12}{5}}(\Omega)} 
+ \| g \|_{L^{2}(\Omega)}\\
\leq &
\kappa \| \Lambda^{2\beta} \theta  \|_{L^2(\Omega)}
+C \| \Lambda^{\alpha} u \|_{L^{2}(\Omega)}
\|\Lambda^{2\beta} \theta \|_{L^{2}(\Omega)}
+ \| g \|_{L^{2}(\Omega)}.
\end{aligned}
\end{equation}
Hence, we can derive that
\begin{equation}\label{eq030705}
\begin{aligned}
\|u_t\|_{L^{2}(t,t+\ell;L^{2}(\Omega))}
\leq &
\nu \| u \|_{L^{2}(t,t+\ell;H_\sigma^{2\alpha}(\Omega))} 
+C \|u \|_{L^{\infty}(t,t+\ell;H_\sigma^{\alpha}(\Omega))} 
\| u \|_{L^{2}(t,t+\ell;H_\sigma^{\alpha}(\Omega))}\\ 
&+ \| \theta \|_{L^{2}(t,t+\ell;L^{2}(\Omega))} 
+ \| f \|_{L^{2}(t,t+\ell;L^{2}(\Omega))} \\
\leq &
\nu \| u \|_{L^{2}(t,t+\ell;H_\sigma^{2\alpha}(\Omega))} 
+C \|u \|_{L^{\infty}(t,t+\ell;H_\sigma^{\alpha}(\Omega))} 
\| u \|_{L^{2}(t,t+\ell;H_\sigma^{\alpha}(\Omega))}\\ 
&+ C\| \theta \|_{L^{\infty}(t,t+\ell;L^{2}(\Omega))} 
+ \| f \|_{L^{2}(t,t+\ell;L^{2}(\Omega))} ,
\end{aligned}
\end{equation}
and
\begin{equation}\label{eq030706}
\begin{aligned}
\| \theta_t \|_{L^{2}(t,t+\ell;L^{2}(\Omega))}
\leq&
\kappa \| \theta \|_{L^{2}(t,t+\ell;H^{2\beta}(\Omega))}
+ C
\| u \|_{L^{\infty}(t,t+\ell;H_\sigma^{\alpha}(\Omega))}
\|\theta \|_{L^{2}(t,t+\ell;H^{2\beta}(\Omega))}
\\
&+ \| g \|_{L^{2}(t,t+\ell;L^{2}(\Omega))}.
\end{aligned}
\end{equation}
Combining this with Lemmas \cref{lem0301,lem0304}, we infer that estimates \cref{eq030705,eq030706} are bounded.
Consequently, there exists $\rho_5 >0$ such that 
\begin{equation}\label{eq030707}
\int_{0}^{\ell}\left( 
\|u_t(t+\zeta) \|_{L^{2}(\Omega)}^{2} 
+ 
\|\theta_t(t+\zeta) \|_{L^{2}(\Omega)}^{2} \right)  d\zeta 
\leq \rho_{5}
\end{equation}
for all $t-\tau \geq \tau_{5}=\tau_{4}$.
\end{proof}

Define the set
\begin{equation}\label{defWl}
W_{\ell}:=
\left\{
 \chi\in X_{\ell}: 
 \chi\in {L^{2}(\tau,\tau+\ell;H_\sigma^{2\alpha}(\Omega) \times H^{2\beta}(\Omega))};
 \chi_{t}\in {L^{2}(\tau,\tau+\ell;H_{1} \times H_{2})}
\right\}
\end{equation}
equipped with the norm 
\begin{equation*}
\|\chi\|_{W_{\ell}}
=\|\chi\|_{L^{2}(\tau,\tau+\ell;H_\sigma^{2\alpha}(\Omega) \times H^{2\beta}(\Omega))}
+\|\chi_{t}\|_{L^{2}(\tau,\tau+\ell;H_{1} \times H_{2})}.
\end{equation*}
Next we define $\hat{\mathcal{B}}_{1}^{\ell}(t):=\left\{ \mathcal{B}_{1}^{\ell}(t):t\in\mathbb{R} \right\}$, where 
\[
\mathcal{B}_{1}^{\ell}(t):=\left\{\chi \in X_{\ell}:\|\chi\|_{L^{2}(t,t+\ell;H_\sigma^{2\alpha}(\Omega) \times H^{2\beta}(\Omega))}
+\|\chi_{t}\|_{L^{2}(\tau,\tau+\ell;H_{1} \times H_{2})}\leq \rho_{6} \right\},
\]
where $\rho_{6}:=2+\rho_{4}+\rho_{5}$.

Applying \cref{lem0304,lem0306} and the above expression, we can derive that
\[
L(t, \tau) \mathcal{B}_0^\ell(\tau) \subset \mathcal{B}_{1}^\ell(t),
\]
for all $t-\tau \geq \tau_{5}$.

\begin{lemma}\label{lem0307}
Assume that \cref{ass:forcing} holds. Then for any $t \geq \tau$, the following closure property holds:
\[
\overline{L(t, \tau) \mathcal{B}_0^\ell(\tau)}^{L^{2}(\tau,\tau+\ell;H_\sigma^{\alpha}(\Omega) \times H^{\beta}(\Omega))} \subset \mathcal{B}_0^\ell(t).
\]
\end{lemma}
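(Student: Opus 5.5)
Take an arbitrary $\chi$ in the $L^{2}(\tau,\tau+\ell;H_\sigma^{\alpha}(\Omega)\times H^{\beta}(\Omega))$-closure of $L(t,\tau)\mathcal{B}_0^\ell(\tau)$. The goal is to show two things: $\chi$ is again an $\ell$-trajectory, so $\chi\in X_\ell$, and $e_0(\chi)\in B_1(t)$.

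By definition there is a sequence $\chi_m=L(t,\tau)\chi_m^0$ with $\chi_m^0\in\mathcal{B}_0^\ell(\tau)$ and $\chi_m\to\chi$ in $L^2(H_\sigma^\alpha\times H^\beta)$. Each $\chi_m$ is the restriction to a window of length $\ell$ of a strong solution $(u_m,\theta_m)$ whose initial datum at $\tau$ lies in $B_1(\tau)=B_0$. By the invariance $U(t,\tau)B_1(\tau)\subset B_1(t)$, the values $e_0(\chi_m)=U(t,\tau)e_0(\chi_m^0)$ all lie in $B_1(t)$, which is a bounded subset of $H_\sigma^\alpha\times H^\beta$.

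\textbf{Step 1: uniform bounds.} Since the initial data lie in the bounded set $B_0$, the energy estimates of \cref{lem0301,lem0302,lem0303,lem0306} bound $(u_m,\theta_m)$ uniformly in $m$ on the window $[t,t+\ell]$. This uses the local-in-time (finite horizon) version of those Gronwall arguments, which holds for $t-\tau$ both small and large. The resulting bounds are in
\[
L^\infty(H_\sigma^\alpha\times H^\beta)\cap L^2(H_\sigma^{2\alpha}\times H^{2\beta}),
\]
with time derivatives bounded in $L^2(H_1\times H_2)$.

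\textbf{Step 2: extraction of a limit.} Using \cref{ALlem} together with weak-$\ast$ compactness, extract a subsequence converging $(\ast)$-weakly in the space appearing in the Corollary after \cref{thm0203}. Its strong $L^2(H^\alpha\times H^\beta)$ limit must coincide with $\chi$.

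\textbf{Step 3: the limit is a trajectory.} Apply the Corollary following \cref{thm0203}. The endpoint values $e_0(\chi_m)$ are bounded in $H_\sigma^\alpha\times H^\beta$. Moreover, the embedding of the space $\{\chi\in L^2(H^{2\alpha}\times H^{2\beta}):\chi_t\in L^2(L^2)\}$ into $C([t,t+\ell];H_\sigma^\alpha\times H^\beta)$ holds by interpolation. Hence $e_0(\chi_m)\to e_0(\chi)$ in $H_\sigma^\alpha\times H^\beta$, weakly at least, and in fact strongly by the Aubin–Lions type compactness into $C([t,t+\ell];H_1\times H_2)$ combined with the bounds. The Corollary then identifies $\chi$ as the strong solution issued from $e_0(\chi)$, so $\chi\in X_\ell$.

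\textbf{Step 4 (main obstacle): $e_0(\chi)\in B_1(t)$.} The set $B_1(t)$ is defined as a closure in the norm topology of $H_\sigma^\alpha\times H^\beta$. It is therefore closed there, and also weakly closed only if it is convex, which it need not be. So the passage to the limit requires strong convergence of $e_0(\chi_m)$ in $H_\sigma^\alpha\times H^\beta$.

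The plan to obtain it is as follows. First, strong $L^2(H^\alpha\times H^\beta)$ convergence gives $\chi_m(s)\to\chi(s)$ in $H^\alpha\times H^\beta$ for a.e. $s$ near $t$. Then continuous dependence (\cref{thm0203}) transports this convergence back to the point $s=t$: I would run the difference estimate for $\chi_m-\chi$ backward in the integrated form used in \cref{lem0302}, that is, averaging over $\zeta\in(0,\ell)$. This yields $\|e_0(\chi_m)-e_0(\chi)\|$ controlled by $\|\chi_m-\chi\|_{L^2(H^\alpha\times H^\beta)}$ and the uniform bounds.

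If that backward control is unavailable, the fallback is to exploit $\chi_m=L(t,\tau)\chi_m^0$, with initial data in the compact-in-$H_1\times H_2$ image of the smoothing. Pass to a limit of the initial data $e_0(\chi_m^0)$ in a weaker topology, and use uniqueness to identify $e_0(\chi)=U(t,\tau)\lim_m e_0(\chi_m^0)$. Membership in $B_1(t)$ then follows from the closedness of $B_0$ and the definition of $B_1(t)$ as a union over $U(s,\tau)B_0$.
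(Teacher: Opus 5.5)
The step that fails is your primary plan in Step~4, together with the claim at the end of Step~3 that $e_0(\chi_m)\to e_0(\chi)$ strongly in $H_\sigma^\alpha(\Omega)\times H^\beta(\Omega)$. Compactness in $C([t,t+\ell];H_1\times H_2)$ combined with uniform $H_\sigma^\alpha\times H^\beta$ bounds gives strong convergence only in lower norms, and merely weak convergence at the top regularity.

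Running the difference estimate backward does not help either. The inequality \cref{eq0503} propagates only forward in time. The averaging device of \cref{lem0302,lem0403} bounds the \emph{right} endpoint of a window by the average over that window, which is exactly why the paper works with $e_1$ and not $e_0$. A bound of $\|e_0(\chi_m)-e_0(\chi)\|_{H_\sigma^\alpha\times H^\beta}$ by $\|\chi_m-\chi\|_{L^2(t,t+\ell;H_\sigma^\alpha\times H^\beta)}$ would be a backward parabolic estimate, and it is false already for the linear part. If $\partial_t u+\nu\Lambda^{2\alpha}u=0$ and $u(t)$ is a single Fourier mode of frequency $|k|$ with $\|u(t)\|_{H^\alpha}=1$, then $\|u\|^2_{L^2(t,t+\ell;H^\alpha)}\le |k|^{-2\alpha}/(2\nu)\to 0$. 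For $t=\tau$, data of this type (scaled by $\sqrt{\rho_4}$) lie in $B_0$, so there the convergence of $e_0(\chi_m)$ is genuinely only weak. No argument confined to the window $[t,t+\ell]$ can upgrade it.

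Your fallback, on the other hand, is a correct proof, and it should replace Steps~3--4. It runs as follows:
\begin{enumerate}
\item Carry the local bounds of Step~1 over the whole interval $[\tau,t+\ell]$, not just the window.
\item Extract $e_0(\chi_m^0)\rightharpoonup w_0$ weakly in $H_\sigma^\alpha\times H^\beta$. Then $w_0\in B_0$, because $B_0$ is a closed ball and hence weakly closed.
\item Apply \cref{ALlem} on $[\tau,t+\ell]$ to get strong convergence in $L^2(\tau,t+\ell;H_\sigma^\alpha\times H^\beta)$, and pass to the limit in the equations. The bound on the time derivatives carries the initial value $w_0$ to the limit.
\item Uniqueness from \cref{thm0203} identifies the limit as $U(\cdot,\tau)w_0$. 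Do not quote the Corollary after \cref{thm0203} as stated, since it assumes strong convergence of the data.
\end{enumerate}
Hence $\chi=L(t,\tau)\chi^0$, where $\chi^0\in\mathcal{B}_0^\ell(\tau)$ is the trajectory issued from $w_0$. Therefore $e_0(\chi)=U(t,\tau)w_0\in B_1(t)$ by the invariance you already used. Note that for $t-\tau>\tau_0$ this invariance comes from $U(t,\tau)B_0\subset B_0$, not from the union defining $B_1(t)$. This route never needs strong convergence of $e_0(\chi_m)$, and it actually shows that $L(t,\tau)\mathcal{B}_0^\ell(\tau)$ is itself closed.

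The paper argues differently. It reduces the claim to closedness of $\mathcal{B}_0^\ell(t)$, takes a weak limit of the initial values, and places it in $B_1$ by asserting that $B_1$ is convex. That is harmless if $B_1$ at the initial time is read as the ball $B_0$, as the paper does. For a general $B_1(t)$, however, it is exactly the convexity gap you flag. Your fallback uses convexity only of $B_0$, so it is the more robust argument.
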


\begin{proof}
Since $L(t, \tau) \mathcal{B}_0^\ell(\tau) \subset \mathcal{B}_0^\ell(t)$ is already established for any $t\geq\tau$, it suffices to show that for any fixed $t \in \mathbb{R}$, 
\[
\overline{\mathcal{B}_0^\ell(t)}^{L^{2}(\tau,\tau+\ell;H_\sigma^{\alpha}(\Omega) \times H^{\beta}(\Omega))} \subset \mathcal{B}_0^\ell(t).
\]  
Fix $\tau \in \mathbb{R}$ and let $$\chi_0 \in \overline{\mathcal{B}_0^\ell(\tau)}^{L^{2}(\tau,\tau+\ell;H_\sigma^{\alpha}(\Omega) \times H^{\beta}(\Omega))}.$$
By definition, there exists a sequence of $\ell$-trajectories $\{\chi_n\}_{n=1}^{\infty} \subset \mathcal{B}_0^\ell(\tau)$ such that:
\[
\chi_n \to \chi_0 \quad \text{in} \quad L^{2}(\tau, \tau+\ell; H_\sigma^{\alpha}(\Omega) \times H^{\beta}(\Omega)) \quad \text{as} \quad n \to \infty.
\]
From the definition of $\mathcal{B}_0^\ell(\tau)$, we have $e_0(\chi_n) \in B_1(\tau)$ for all $n \in \mathbb{N}$. Since $B_1(\tau)$ is a bounded subset of $H_\sigma^{\alpha}(\Omega) \times H^{\beta}(\Omega)$, there exists a subsequence $\{\chi_{n_j}\}_{j=1}^{\infty}$ and $(u_{\tau}, \theta_{\tau}) \in H_\sigma^{\alpha}(\Omega) \times H^{\beta}(\Omega)$ such that:
\[
e_0(\chi_{n_j}) \rightharpoonup (u_{\tau}, \theta_{\tau}) \quad \text{weakly in} \quad H_\sigma^{\alpha}(\Omega) \times H^{\beta}(\Omega) \quad \text{as} \quad j \to \infty.
\]
Since $B_1(\tau)$ is convex and closed, it is also weakly closed. Therefore, $(u_{\tau}, \theta_{\tau}) \in B_1(\tau)$. Now, these estimates established in \cref{lem0304,lem0306} imply that the sequence $\{\chi_{n_j}\}$ is bounded. Therefore, we can extract a further subsequence (still denoted by $\{\chi_{n_j}\}$) that converges strongly in $L^{2}(\tau, \tau+\ell; H_\sigma^{\alpha}(\Omega) \times H^{\beta}(\Omega))$ to some limit. Since we already know that $\chi_{n_j} \to \chi_0$ in this space, it follows that the limit is $\chi_0$. Now, by Theorem \ref{thm0203} on well-posedness and continuous dependence on initial data, and since $e_0(\chi_{n_j}) \to (u_{\tau}, \theta_{\tau})$ weakly in $H_\sigma^{\alpha}(\Omega) \times H^{\beta}(\Omega)$, it follows that $\chi_0$ is the unique strong solution of \eqref{eq0101} with initial data $(u_{\tau}, \theta_{\tau})$ at time $\tau$. Finally, since $(u_{\tau}, \theta_{\tau}) \in B_1(\tau)$ and $\chi_0$ is the corresponding solution, we have $\chi_0 \in \mathcal{B}_0^\ell(\tau)$ by definition.
\end{proof}

\begin{lemma}\label{lem0308}
Assume that \cref{ass:forcing} holds. Then, for any $\tau \in \mathbb{R}$, the mapping $L(t, \tau) : X_\ell \to X_\ell$ is Lipschitz continuous on $\mathcal{B}_0^\ell(\tau) \subset X_\ell$ for all $t \geq \tau + \ell$.  
\end{lemma}
\begin{proof}
For all fixed $\tau \in \mathbb{R}$ and any $\chi_{1}, \chi_{2} \in \mathcal{B}_{0}^{\ell}(\tau)$, let $L(t, \tau)\chi_1 = (u_1(t), \theta_1(t))$ and $L(t, \tau)\chi_2 = (u_2(t), \theta_2(t))$ for any fixed $t \geq \tau + \ell$. Let $(U, \Theta) = (u_1 - u_2, \theta_1 - \theta_2)$. Then we have
\begin{equation}\label{eq0501}
\begin{cases}
\partial_t U + \nu \Lambda^{2\alpha} U + \mathbb{P} \left( (u_1 \cdot \nabla) U \right) + \mathbb{P} \left( (U \cdot \nabla) u_2 \right) = \mathbb{P} (\Theta e_3), \\
\partial_t \Theta + \kappa \Lambda^{2\beta} \Theta + (u_1 \cdot \nabla) \Theta + (U \cdot \nabla) \theta_2 = 0.
\end{cases}
\end{equation}
Multiplying \eqref{eq0501} by $(\Lambda^{2\alpha}U,\Lambda^{2\beta}\Theta)$, and integrating over $\Omega$ yields
\begin{equation}\label{eq0502}
\begin{aligned}
& \frac{1}{2}\frac{d}{dt}
\left( 
\| \Lambda^{\alpha}U(t) \|_{L^2(\Omega)}^{2} + \| \Lambda^{\beta}\Theta (t) \|_{L^2(\Omega)}^{2} 
\right)
+\nu \| \Lambda^{2\alpha}U(t) \|_{L^2(\Omega)}^{2} 
+\kappa \| \Lambda^{2\beta}\Theta(t) \|_{L^2(\Omega)}^{2} 
\\
=
& \int_{\Omega} \mathbb{P} (\Theta e_3) \cdot \Lambda^{2\alpha}U dx
- \int_{\Omega} \mathbb{P} \left( (u_1 \cdot \nabla) U \right) \cdot \Lambda^{2\alpha}U dx
- \int_{\Omega} \mathbb{P} \left( (U \cdot \nabla) u_2 \right) \cdot \Lambda^{2\alpha}U dx\\
&
- \int_{\Omega} \left( (u_1 \cdot \nabla) \Theta \right) \Lambda^{2\beta}\Theta dx
- \int_{\Omega} \left( (U \cdot \nabla) \theta_2 \right) \Lambda^{2\beta}\Theta dx.
\end{aligned}
\end{equation}
Using Young’s inequality and Poincar\'{e}’s inequality, we have
\begin{align*}
\int_{\Omega} \mathbb{P} (\Theta e_3) \cdot \Lambda^{2\alpha} U dx
\leq &
\frac{\nu}{6} \| \Lambda^{2\alpha} U(t) \|_{L^2(\Omega)}^{2} + \frac{3}{2\nu} \| \Theta(t) \|_{L^2(\Omega)}^{2} \\
\leq &
\frac{\nu}{6} \| \Lambda^{2\alpha} U(t) \|_{L^2(\Omega)}^{2} + C \|\Lambda^{\beta} \Theta(t) \|_{L^2(\Omega)}^{2}.
\end{align*}
By using H\"{o}lder’s inequality, as well as, Young’s inequality, we have  
\begin{align*}
\int_{\Omega} \mathbb{P} \left( (u_1 \cdot \nabla) U \right) \cdot \Lambda^{2\alpha}U dx
   & \leq \int_{\Omega}  |u_1| |\nabla U | |\Lambda^{2\alpha}U| dx \\
   & \leq \| u_1(t) \|_{L^{12}(\Omega)} \| \nabla U(t) \|_{L^{\frac{12}{5}}(\Omega)} \|\Lambda^{2\alpha} U(t) \|_{L^2(\Omega)} \\
   & \leq C\| \Lambda^{\frac{5}{4}}u_{1}(t) \|_{L^{2}(\Omega)} \| \Lambda^{\alpha}U(t) \|_{L^2(\Omega)} \|\Lambda^{2\alpha} U(t) \|_{L^2(\Omega)} \\
   & \leq \frac{\nu}{6} \| \Lambda^{2\alpha}U (t) \|_{L^{2}(\Omega)}^{2} + C \| \Lambda^{\frac{5}{4}} u_1(t) \|_{L^2(\Omega)}^{2} \| \Lambda^{\alpha}U(t) \|_{L^2(\Omega)}^{2}
\end{align*}
and it similarly follows that
\begin{align*}
  \int_{\Omega} \mathbb{P} \left( (U \cdot \nabla) u_2 \right) \cdot \Lambda^{2\alpha}U dx
   & \leq \int_{\Omega}  |U| |\nabla u_2| |\Lambda^{2\alpha}U| dx \\
   & \leq \| U(t) \|_{L^{12}(\Omega)} \| \nabla u_2(t) \|_{L^{\frac{12}{5}}(\Omega)} \|\Lambda^{2\alpha} U(t) \|_{L^2(\Omega)} \\
   & \leq C\| \Lambda^{\alpha}U(t) \|_{L^{2}(\Omega)} \| \Lambda^{\frac{5}{4}} u_2 (t) \|_{L^2(\Omega)} \|\Lambda^{2\alpha} U(t) \|_{L^2(\Omega)} \\
   & \leq \frac{\nu}{6} \| \Lambda^{2\alpha}U (t) \|_{L^{2}(\Omega)}^{2} + C \| \Lambda^{\frac{5}{4}} u_2(t) \|_{L^2(\Omega)}^{2} \| \Lambda^{\alpha}U(t) \|_{L^2(\Omega)}^{2}.
\end{align*}
Similarly, applying Gagliardo-Nirenberg inequality to the following expression, we can further derive that
\begin{align*}
  \int_{\Omega} \left( (u_1 \cdot \nabla) \Theta \right) \Lambda^{2\beta}\Theta dx
   & \leq \int_{\Omega}  |u_1| |\nabla\Theta| |\Lambda^{2\beta}\Theta| dx \\
   & \leq \| u_1(t) \|_{L^{12}(\Omega)} \| \nabla\Theta(t) \|_{L^{\frac{12}{5}}(\Omega)} \|\Lambda^{2\beta}\Theta(t) \|_{L^2(\Omega)} \\
   & \leq C \| \Lambda^{\frac{5}{4}} u_1 (t) \|_{L^2(\Omega)}
   \|\Lambda^{\frac{5}{8}}\Theta(t) \|_{L^2(\Omega)}^{\frac{16\beta-10}{16\beta-5}} \|\Lambda^{2\beta}\Theta(t) \|_{L^2(\Omega)}^{\frac{16\beta}{16\beta-5}}  \\
   & \leq \frac{\kappa}{4} \|\Lambda^{2\beta}\Theta(t) \|_{L^2(\Omega)}^2  + C \| \Lambda^{\frac{5}{4}} u_1 (t) \|_{L^2(\Omega)}^{\frac{16\beta-5}{8\beta-5}}
   \|\Lambda^{\frac{5}{8}}\Theta(t) \|_{L^2(\Omega)}^{2}\\
   & \leq \frac{\kappa}{4} \|\Lambda^{2\beta}\Theta(t) \|_{L^2(\Omega)}^2  + C \| \Lambda^{\frac{5}{4}} u_1 (t) \|_{L^2(\Omega)}^{\frac{16\beta-5}{8\beta-5}}
   \|\Lambda^{\beta}\Theta(t) \|_{L^2(\Omega)}^{2}
\end{align*}
Also we have
\begin{align*}
  \int_{\Omega} \left( (U \cdot \nabla) \theta_2 \right) \Lambda^{2\beta}\Theta dx
   & \leq \int_{\Omega}  |U| |\nabla \theta_2| |\Lambda^{2\beta}\Theta| dx \\
   & \leq \| U(t) \|_{L^{12}(\Omega)} \| \nabla \theta_2(t) \|_{L^{\frac{12}{5}}(\Omega)} \|\Lambda^{2\beta} \Theta(t) \|_{L^2(\Omega)} \\
   & \leq C\| \Lambda^{\alpha}U(t) \|_{L^{2}(\Omega)} \| \Lambda^{\frac{5}{4}} \theta_2 (t) \|_{L^2(\Omega)} \|\Lambda^{2\beta} \Theta(t) \|_{L^2(\Omega)} \\
   & \leq \frac{\kappa}{4} \| \Lambda^{2\beta} \Theta (t) \|_{L^{2}(\Omega)}^{2} + C \| \Lambda^{\frac{5}{4}} \theta_2(t) \|_{L^2(\Omega)}^{2} \| \Lambda^{\alpha}U(t) \|_{L^2(\Omega)}^{2}.
\end{align*}
Inserting the above five estimates back into \cref{eq0502}, we obtain
\begin{equation}\label{eq0503}
\begin{aligned}
& \frac{d}{dt}
\left( 
\| \Lambda^{\alpha}U(t) \|_{L^2(\Omega)}^{2} + \| \Lambda^{\beta}\Theta (t) \|_{L^2(\Omega)}^{2} 
\right)
+\nu \| \Lambda^{2\alpha}U(t) \|_{L^2(\Omega)}^{2} 
+\kappa \| \Lambda^{2\beta}\Theta(t) \|_{L^2(\Omega)}^{2} 
\\
\leq
& 
\gamma(t)
\left( 
\| \Lambda^{\alpha}U(t) \|_{L^2(\Omega)}^{2} + \| \Lambda^{\beta}\Theta (t) \|_{L^2(\Omega)}^{2}
\right),
\end{aligned}
\end{equation}
where
$$
\gamma(t)=
C \left( 1+
\| \Lambda^{\frac{5}{4}}u_1 (t) \|_{L^2(\Omega)}^{2} 
+\| \Lambda^{\frac{5}{4}}u_2 (t) \|_{L^2(\Omega)}^{2} 
+ \| \Lambda^{\frac{5}{4}} u_1 (t) \|_{L^2(\Omega)}^{\frac{16\beta-5}{8\beta-5}}
+ \| \Lambda^{\frac{5}{4}}\theta_2 (t) \|_{L^2(\Omega)}^{2} 
\right).
$$
Let $s \in (0, \ell)$. Applying Gronwall's inequality to \cref{eq0503} over $[\tau + s, t + s]$, we obtain
\begin{equation}\label{eq0504}
\begin{aligned}
&\| \Lambda^{\alpha}U(t+s) \|_{L^2(\Omega)}^{2} + \| \Lambda^{\beta}\Theta (t+s) \|_{L^2(\Omega)}^{2} \\
\leq & \left( \| \Lambda^{\alpha}U(\tau+s) \|_{L^2(\Omega)}^{2} + \| \Lambda^{\beta}\Theta (\tau+s) \|_{L^2(\Omega)}^{2} \right) \exp \left( \int_{\tau+s}^{t+\ell} \gamma(\zeta) d\zeta \right) \\
\leq & \left( \| \Lambda^{\alpha}U(\tau+s) \|_{L^2(\Omega)}^{2} + \| \Lambda^{\beta}\Theta (\tau+s) \|_{L^2(\Omega)}^{2} \right) \exp \left( \int_{\tau}^{t+\ell} \gamma(\zeta) d\zeta \right).
\end{aligned}
\end{equation}
By integrating both sides of \cref{eq0504} with respect to $s$ from $0$ to $\ell$, we have
\begin{equation}\label{eq0320}    
\begin{aligned}
&\int_{0}^{\ell} 
\left( \| \Lambda^{\alpha}U(t+s) \|_{L^2(\Omega)}^{2} + \| \Lambda^{\beta}\Theta (t+s) \|_{L^2(\Omega)}^{2} \right)
 ds \\
\leq &  \mathcal{K}_{\ell}(t,\tau)
\int_{0}^{\ell} 
\left( \| \Lambda^{\alpha}U(\tau+s) \|_{L^2(\Omega)}^{2} + \| \Lambda^{\beta}\Theta (\tau+s) \|_{L^2(\Omega)}^{2} \right)ds,
\end{aligned}
\end{equation}
where
$$
\mathcal{K}_{\ell}(t,\tau)= \exp \left( \int_{\tau}^{t+\ell} \gamma(\zeta) d\zeta \right)
$$
is a finite constant depending on $(u_1(\tau), \theta_1(\tau))$ and $(u_2(\tau), \theta_2(\tau))$, as follows from \cref{thm0203,cor01}. Hence, it follows that 
$$
\| L(t, \tau)\chi_1 - L(t, \tau)\chi_2 \|_{L^2(t, t+\ell; H_\sigma^{\alpha}(\Omega) \times H^{\beta}(\Omega))}^2 
\leq \mathcal{K}_\ell(t, \tau) \| \chi_1 - \chi_2 \|_{L^2(\tau, \tau+\ell; H_\sigma^{\alpha}(\Omega) \times H^{\beta}(\Omega))}^2,
$$
which gives the mapping $L(t, \tau) : X_\ell \to X_\ell$ is Lipschitz continuous on $\mathcal{B}_0^\ell(\tau)$ for all $t \geq \tau + \ell$.
\end{proof}

Using the invariance of \(B_1(t)\) together with \cref{lem0304,lem0307,lem0308}, we can infer that \(Y = \overline{L(t, \tau) \mathcal{B}_0^\ell(\tau)}^{X_\ell}\) is a positively invariant, uniformly pullback absorbing, compact subset of \(X_\ell\) whenever \(t - \tau \geq \tau_{4}\). Subsequently, we are able to derive the following result. 

\begin{theorem}\label{lem0309}
Assume that \cref{ass:forcing} holds. Then the process \(\{L(t, \tau)\}_{t \geq\tau}\) generated by the system in \cref{eq0101} admits a pullback attractor \(\hat{\mathcal{A}}_{\ell}=\{\mathcal {A}_{\ell}(t): t \in \mathbb{R}\}\) in \(X_{\ell}\) and \(e_{1}(\mathcal {A}_{\ell}(t-\ell)) \subset B_{1}(t)\) for any \(t \in \mathbb{R}\) ,where 
\[
e_{1}(\mathcal {A}_{\ell }(t-\ell ))=\{ e_{1}(\chi ):\chi \in \mathcal {A}_{\ell }(t-\ell )\}
\]
for any \(t \in \mathbb{R}\).
\end{theorem}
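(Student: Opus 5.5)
The plan is to apply the standard abstract existence criterion for pullback attractors of a continuous process: if a process on a complete metric space possesses a family of compact sets that pullback absorbs every bounded set, then the pullback attractor exists. It is given by
\[
\mathcal{A}_\ell(t)=\bigcap_{s\ge 0}\overline{\bigcup_{r\ge s}L(t,t-r)\mathcal{B}_0^\ell(t-r)}^{X_\ell},
\]
i.e. as the pullback $\omega$-limit of the absorbing family. I would verify the hypotheses for $\{L(t,\tau)\}_{t\ge\tau}$ on $X_\ell$ in three steps.

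\emph{Continuity.} The process $L(t,\tau)$ is continuous on $X_\ell$. On the bounded absorbing sets this follows from the Lipschitz estimate of \cref{lem0308} for $t\ge\tau+\ell$. For $t\in[\tau,\tau+\ell)$ I would use \cref{thm0203}, since $L(t,\tau)\chi$ is built from the endpoint values of $\chi$ and the continuous dependence on initial data.

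\emph{Absorption.} The family $\mathcal{B}_0^\ell(t)$ pullback absorbs $\mathcal{D}_\ell$. For $B_\ell\in\mathcal{D}_\ell$, \cref{lem0304} gives $e_0(L(t,t-r)\chi)\in B_0\subset B_1(t)$ once $r\ge\tau_4(B_\ell)$, hence $L(t,t-r)B_\ell\subset\mathcal{B}_0^\ell(t)$.

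\emph{Compactness.} I would take the compact family to be
\[
\mathcal{K}(t):=\overline{L(t,t-\tau_5)\mathcal{B}_0^\ell(t-\tau_5)}^{X_\ell}.
\]
It is contained in $\mathcal{B}_0^\ell(t)$ by \cref{lem0307}, and it pullback absorbs $\mathcal{D}_\ell$ by the cocycle property combined with the absorption step. By \cref{lem0304,lem0306}, $L(t,t-\tau_5)\mathcal{B}_0^\ell(t-\tau_5)\subset\mathcal{B}_1^\ell(t)$, which is a bounded subset of $W_\ell$. \cref{ALlem}, applied with $X_0=H_\sigma^{2\alpha}\times H^{2\beta}$, $X=H_\sigma^\alpha\times H^\beta$, $X_1=H_1\times H_2$ and $p_1=p_2=2$, gives that $W_\ell$ embeds compactly into $L^2(t,t+\ell;H_\sigma^\alpha\times H^\beta)$. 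So $\mathcal{K}(t)$ is precompact there, and it is closed there by \cref{lem0307}. The remaining point is that limits stay inside $X_\ell$, i.e. are still trajectories. That is exactly the content of \cref{lem0307}, which uses the corollary after \cref{thm0203} on weak-$\ast$ limits of strong solutions. Hence $\mathcal{K}(t)$ is compact in $X_\ell$, and the abstract theorem yields $\hat{\mathcal{A}}_\ell$ with $\mathcal{A}_\ell(t)\subset\mathcal{K}(t)\subset\mathcal{B}_0^\ell(t)$.

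For the inclusion $e_1(\mathcal{A}_\ell(t-\ell))\subset B_1(t)$ I would use invariance, $\mathcal{A}_\ell(t)=L(t,t-\ell)\mathcal{A}_\ell(t-\ell)$, and $\mathcal{A}_\ell(t)\subset\mathcal{B}_0^\ell(t)$. Take $\chi\in\mathcal{A}_\ell(t-\ell)$, which is defined on $[t-\ell,t]$. Then $e_1(\chi)=\chi(t)=e_0(L(t,t-\ell)\chi)$, and this lies in $e_0(\mathcal{A}_\ell(t))\subset e_0(\mathcal{B}_0^\ell(t))\subset B_1(t)$.

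The main obstacle is the compactness step, specifically confirming that $X_\ell$-closures of the absorbing sets remain inside $X_\ell$, and reconciling the time-shifted intervals $[t,t+\ell]$ with the fixed interval used by the norm of $X_\ell$. I would handle the latter by translating trajectories to a reference interval and relying on \cref{lem0307} for the former.
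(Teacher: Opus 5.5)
Your proposal is correct and follows essentially the paper's route: a compact pullback absorbing family $\overline{L(t,\tau)\mathcal{B}_0^\ell(\tau)}^{X_\ell}$ built from \cref{lem0304,lem0306,lem0307,lem0308} and the compact embedding $W_\ell\subset\subset X_\ell$ of \cref{ALlem}, then the abstract existence theorem, with your invariance argument $e_1(\chi)=e_0(L(t,t-\ell)\chi)\in e_0(\mathcal{A}_\ell(t))\subset B_1(t)$ supplying the inclusion that the paper leaves implicit. One small tightening: for $t-\tau<\ell$, obtain continuity by observing that the Gronwall estimate in the proof of \cref{lem0308} holds verbatim for every $t\ge\tau$, because \cref{thm0203} alone does not give continuity of $e_1$ in the $L^2$ topology of $X_\ell$ (that requires the averaging argument of \cref{lem0403}).
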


\section{The existence of pullback exponential attractors}\label{sec04}
\subsection{The existence of pullback exponential attractors in \texorpdfstring{$X_{\ell}$}{xl}}\label{subsec0401}
We construct a pullback exponential attractor $\hat{\mathcal{M}}_{\ell} = \left\{ \mathcal{M}_{\ell} (t): t \in \mathbb{R} \right\}$ in $X_{\ell}$ by combining the $\ell$-trajectory method and the smoothing property of the process $\{L(t, \tau)\}_{t \geq \tau}$.

\begin{lemma}\label{lem0401}
Assume that \cref{ass:forcing} holds. Then, for any $\tau \in \mathbb{R}$, the mapping $L(t, \tau) : X_\ell \to W_\ell$ is Lipschitz continuous on $\mathcal{B}_0^\ell(\tau) \subset X_\ell$ for all $t \geq \tau + \ell$.  
\end{lemma}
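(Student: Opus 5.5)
We need to bound $\|L(t,\tau)\chi_1-L(t,\tau)\chi_2\|_{W_\ell}$ by $C\|\chi_1-\chi_2\|_{X_\ell}$ for $\chi_1,\chi_2\in\mathcal{B}_0^\ell(\tau)$ and $t\ge\tau+\ell$. The $W_\ell$-norm has two parts: the $L^2(t,t+\ell;H_\sigma^{2\alpha}\times H^{2\beta})$ norm of the difference $(U,\Theta)$, and the $L^2(t,t+\ell;H_1\times H_2)$ norm of $(U_t,\Theta_t)$. We reuse the differential inequality \cref{eq0503} from the proof of \cref{lem0308}. Since the dissipative terms $\nu\|\Lambda^{2\alpha}U\|^2+\kappa\|\Lambda^{2\beta}\Theta\|^2$ sit on its left-hand side, it gives both pointwise-in-time control of the $H^\alpha\times H^\beta$ norm and integrated control of the $H^{2\alpha}\times H^{2\beta}$ norm.

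\textbf{Step 1: pointwise bound on $[t,t+\ell]$.} For $s\in(0,\ell)$, Gronwall on $[\tau+s,r]$ with $r\in[t,t+\ell]$ gives, as in \cref{eq0504},
\[
\|\Lambda^\alpha U(r)\|_{L^2(\Omega)}^2+\|\Lambda^\beta\Theta(r)\|_{L^2(\Omega)}^2\le \mathcal{K}_\ell(t,\tau)\bigl(\|\Lambda^\alpha U(\tau+s)\|_{L^2(\Omega)}^2+\|\Lambda^\beta\Theta(\tau+s)\|_{L^2(\Omega)}^2\bigr).
\]
Averaging in $s$ over $(0,\ell)$ yields $\sup_{r\in[t,t+\ell]}(\cdots)\le \ell^{-1}\mathcal{K}_\ell(t,\tau)\|\chi_1-\chi_2\|_{X_\ell}^2$. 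Here $\gamma$ is integrable on $[\tau,t+\ell]$, with a bound uniform over $\mathcal{B}_0^\ell(\tau)$. This follows from \cref{cor01} and the invariance $U(r,\tau)B_1(\tau)\subset B_1(r)$, which give $\|\Lambda^\alpha u_i\|,\|\Lambda^\beta\theta_i\|$ uniformly bounded. Since $\alpha>5/4$, $\|\Lambda^{5/4}u_i\|$ is then bounded. The term $\|\Lambda^{5/4}\theta_2\|^2$ requires interpolation between $H^\beta$ and $H^{2\beta}$ (note $2\beta>5/4$), together with the bound $\int\|\Lambda^{2\beta}\theta_2\|^2\le C$ on bounded intervals. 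This bound comes from energy estimates as in \cref{lem0303}, applied on $[\tau,t+\ell]$ with data in $B_1(\tau)$.

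\textbf{Step 2: the $H^{2\alpha}\times H^{2\beta}$ part.} Integrate \cref{eq0503} over $[t,t+\ell]$:
\[
\nu\int_t^{t+\ell}\|\Lambda^{2\alpha}U\|^2+\kappa\int_t^{t+\ell}\|\Lambda^{2\beta}\Theta\|^2\le \bigl(\|\Lambda^\alpha U(t)\|^2+\|\Lambda^\beta\Theta(t)\|^2\bigr)\Bigl(1+\int_t^{t+\ell}\gamma\Bigr)\sup_{[t,t+\ell]}\text{-factor}.
\]
Combined with Step 1, this is bounded by $C\|\chi_1-\chi_2\|_{X_\ell}^2$.

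\textbf{Step 3: the time derivatives (main obstacle).} From \cref{eq0501},
\[
\|U_t\|\le \nu\|\Lambda^{2\alpha}U\|+\|u_1\|_{L^{12}}\|\nabla U\|_{L^{12/5}}+\|U\|_{L^{12}}\|\nabla u_2\|_{L^{12/5}}+\|\Theta\|,
\]
and similarly for $\Theta_t$. As in \cref{eq030703,eq030704}, bound $\|u_1\|_{L^{12}}\le C\|\Lambda^{5/4}u_1\|$ and $\|\nabla U\|_{L^{12/5}}\le C\|\Lambda^{5/4}U\|\le C\|\Lambda^\alpha U\|$. The delicate term is $\|(U\cdot\nabla)\theta_2\|$. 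We estimate it by $\|U\|_{L^\infty}\|\nabla\theta_2\|_{L^2}$, where $\|U\|_{L^\infty}\le C\|\Lambda^\alpha U\|$ because $\alpha>3/2$ fails in general. So we use instead $\|U\|_{L^{12}}\|\nabla\theta_2\|_{L^{12/5}}\le C\|\Lambda^\alpha U\|\,\|\Lambda^{2\beta}\theta_2\|$, valid since $2\beta>5/4$. Likewise $\|(u_1\cdot\nabla)\Theta\|\le C\|\Lambda^\alpha u_1\|\,\|\Lambda^{2\beta}\Theta\|$. Squaring and integrating over $[t,t+\ell]$, each term is a product of a $\sup$-norm of a difference (Step 1) with an $L^2$-in-time norm of the solutions (\cref{lem0304}-type bounds), or the reverse (Step 2). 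This gives
\[
\|(U_t,\Theta_t)\|_{L^2(t,t+\ell;H_1\times H_2)}^2\le C\|\chi_1-\chi_2\|_{X_\ell}^2.
\]
Adding Steps 2 and 3 proves the lemma. The constant depends on $t-\tau$, which suffices for fixed $t\ge\tau+\ell$.
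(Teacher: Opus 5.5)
Your proposal is correct and follows essentially the same route as the paper. You reuse the difference inequality \eqref{eq0503}, apply Gronwall back to $\tau+s$ and average in $s$ to get the pointwise $H^\alpha_\sigma\times H^\beta$ bound and the integrated $H^{2\alpha}_\sigma\times H^{2\beta}$ bound on $[t,t+\ell]$ (going directly from $\tau+s$ rather than through $t-s$ is only cosmetic), and then estimate $(U_t,\Theta_t)$ term by term with $L^{12}\times L^{12/5}$ H\"older bounds exactly as in \eqref{eq030809}--\eqref{eq030812}; just delete the self-contradictory $L^\infty$ remark in Step 3, and note that your check that $\int_\tau^{t+\ell}\gamma$ is bounded uniformly over $\mathcal{B}_0^\ell(\tau)$ makes explicit a point the paper leaves implicit.
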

\begin{proof}
By \cref{defWl,ALlem}, it follows that $ W_\ell \subset\subset X_\ell $. For all fixed $\tau \in \mathbb{R}$ and any $\chi_{1}, \chi_{2} \in \mathcal{B}_{0}^{\ell}(\tau)$, let $L(t, \tau)\chi_1 = (u_1(t), \theta_1(t))$ and $L(t, \tau)\chi_2 = (u_2(t), \theta_2(t))$ for any fixed $t \geq \tau + \ell$. Let $(U, \Theta) = (u_1 - u_2, \theta_1 - \theta_2)$. Then we can derive that
\begin{equation}\label{eq030801}
\begin{aligned}
& \frac{d}{dt}
\left( 
\| \Lambda^{\alpha}U(t) \|_{L^2(\Omega)}^{2} + \| \Lambda^{\beta}\Theta (t) \|_{L^2(\Omega)}^{2} 
\right)
+\nu \| \Lambda^{2\alpha}U(t) \|_{L^2(\Omega)}^{2} 
+\kappa \| \Lambda^{2\beta}\Theta(t) \|_{L^2(\Omega)}^{2} 
\\
\leq
& 
\gamma(t)
\left( 
\| \Lambda^{\alpha}U(t) \|_{L^2(\Omega)}^{2} + \| \Lambda^{\beta}\Theta (t) \|_{L^2(\Omega)}^{2}
\right),
\end{aligned}
\end{equation}
where
$$
\gamma(t)=
C \left( 1+
\| \Lambda^{\frac{5}{4}}u_1 (t) \|_{L^2(\Omega)}^{2} 
+\| \Lambda^{\frac{5}{4}}u_2 (t) \|_{L^2(\Omega)}^{2} 
+ \| \Lambda^{\frac{5}{4}} u_1 (t) \|_{L^2(\Omega)}^{\frac{16\beta-5}{8\beta-5}}
+ \| \Lambda^{\frac{5}{4}}\theta_2 (t) \|_{L^2(\Omega)}^{2} 
\right).
$$
Consequently, for any $t \geq \tau + \ell$, integrating \cref{eq030801} from $t-s$ to $t+\ell$ with $s\in [0,\frac{\ell}{2}]$, we have
\begin{align*}
&
\| \Lambda^{\alpha}U(t+\ell) \|_{L^2(\Omega)}^{2} 
+ \| \Lambda^{\beta}\Theta (t+\ell) \|_{L^2(\Omega)}^{2} 
+\nu \int_{t-s}^{t+\ell} \| \Lambda^{2\alpha} U(\zeta) \|_{L^2(\Omega)}^{2} d\zeta\\
&
+\kappa \int_{t-s}^{t+\ell} \| \Lambda^{2\beta} \Theta (\zeta) \|_{L^2(\Omega)}^{2} d\zeta
\\
\leq
& \int_{t-s}^{t+\ell}
\gamma(\zeta)
\left( 
\| \Lambda^{\alpha}U(\zeta) \|_{L^2(\Omega)}^{2} + \| \Lambda^{\beta}\Theta(\zeta) \|_{L^2(\Omega)}^{2} 
\right) d\zeta 
+ \| \Lambda^{\alpha}U(t-s) \|_{L^2(\Omega)}^{2} 
+ \| \Lambda^{\beta}\Theta (t-s) \|_{L^2(\Omega)}^{2}.
\end{align*}
Employing the Gronwall Lemma, \cref{eq030801} yields that 
\begin{align*}
&\| \Lambda^{\alpha}U(t+\ell) \|_{L^2(\Omega)}^{2} 
+ \| \Lambda^{\beta}\Theta (t+\ell) \|_{L^2(\Omega)}^{2} \\
\leq&
\left( \| \Lambda^{\alpha}U(t-s) \|_{L^2(\Omega)}^{2} 
+ \| \Lambda^{\beta}\Theta (t-s) \|_{L^2(\Omega)}^{2} \right)
\exp \left(\int_{t-s}^{t+\ell}
\gamma(\zeta) d\zeta \right).
\end{align*}
For $\zeta \in [t-s,t+\ell]$, integrating \cref{eq030801} over the interval $[t-s,\zeta]$ gives us
\begin{align*}
&\| \Lambda^{\alpha}U(\zeta) \|_{L^2(\Omega)}^{2} 
+ \| \Lambda^{\beta}\Theta (\zeta) \|_{L^2(\Omega)}^{2} \\
&\leq
\left( \| \Lambda^{\alpha}U(t-s) \|_{L^2(\Omega)}^{2} 
+ \| \Lambda^{\beta}\Theta (t-s) \|_{L^2(\Omega)}^{2} \right)
\exp\left(\int_{t-s}^{\zeta}
\gamma(r) dr\right).
\end{align*}
Combining this estimate with the integrated inequality, we obtain
\begin{align*}
&\nu \int_{t-s}^{t+\ell} \| \Lambda^{2\alpha} U(\zeta) \|_{L^2(\Omega)}^{2} d\zeta
+\kappa \int_{t-s}^{t+\ell} \| \Lambda^{2\beta} \Theta (\zeta) \|_{L^2(\Omega)}^{2} d\zeta
\\
\leq
&\left( \| \Lambda^{\alpha}U(t-s) \|_{L^2(\Omega)}^{2} 
+ \| \Lambda^{\beta}\Theta (t-s) \|_{L^2(\Omega)}^{2} \right)
\left( \int_{t-\frac{\ell}{2}}^{t+\ell} \gamma(\zeta) d\zeta \right)
\exp \left(\int_{t-s}^{t+\ell}
\gamma(r) dr \right) \\
&+ \| \Lambda^{\alpha}U(t-s) \|_{L^2(\Omega)}^{2} 
+ \| \Lambda^{\beta}\Theta (t-s) \|_{L^2(\Omega)}^{2}.
\end{align*}
Hence, 
\begin{equation}\label{eq030802}
\begin{aligned}
&
\| \Lambda^{\alpha}U(t+\ell) \|_{L^2(\Omega)}^{2} 
+ \| \Lambda^{\beta}\Theta (t+\ell) \|_{L^2(\Omega)}^{2} \\
& +\nu \int_{t-s}^{t+\ell} \| \Lambda^{2\alpha} U(\zeta) \|_{L^2(\Omega)}^{2} d\zeta
+\kappa \int_{t-s}^{t+\ell} \| \Lambda^{2\beta} \Theta (\zeta) \|_{L^2(\Omega)}^{2} d\zeta
\\
\leq
& \mathcal{N}_{\ell}(t,\tau) \left( \| \Lambda^{\alpha}U(t-s) \|_{L^2(\Omega)}^{2} 
+ \| \Lambda^{\beta}\Theta (t-s) \|_{L^2(\Omega)}^{2} \right)
\exp \left(\int_{t-s}^{t+\ell}
\gamma(r) dr \right) \\
&+ \| \Lambda^{\alpha}U(t-s) \|_{L^2(\Omega)}^{2} 
+ \| \Lambda^{\beta}\Theta (t-s) \|_{L^2(\Omega)}^{2},
\end{aligned}
\end{equation}
where
\begin{align*}
\mathcal{N}_{\ell}(t,\tau) = \int_{\tau+\frac{\ell}{2}}^{t+\ell} \gamma(\zeta) d\zeta + 1.
\end{align*}
For any $t \geq \tau + \ell$, $s\in [0,\frac{\ell}{2}]$, and integrating \cref{eq030801} from $\tau+s$ to $t-s$, then we obtain
\begin{equation}\label{eq030803}
\begin{aligned}
& \| \Lambda^{\alpha}U(t-s) \|_{L^2(\Omega)}^{2} 
+ \| \Lambda^{\beta}\Theta (t-s) \|_{L^2(\Omega)}^{2} 
\\
\leq
& \int_{\tau+s}^{t-s}
\gamma(\zeta)
\left( 
\| \Lambda^{\alpha}U(\zeta) \|_{L^2(\Omega)}^{2} + \| \Lambda^{\beta}\Theta(\zeta) \|_{L^2(\Omega)}^{2} 
\right) d\zeta \\
&
+ \| \Lambda^{\alpha}U(\tau+s) \|_{L^2(\Omega)}^{2} 
+ \| \Lambda^{\beta}\Theta (\tau+s) \|_{L^2(\Omega)}^{2}.
\end{aligned}
\end{equation}
Using the Gronwall Lemma and \cref{eq030803}, we derive
\begin{equation}\label{eq030804}
\begin{aligned}
& \| \Lambda^{\alpha}U(t-s) \|_{L^2(\Omega)}^{2} 
+ \| \Lambda^{\beta}\Theta (t-s) \|_{L^2(\Omega)}^{2} \\
\leq
& 
\left( \| \Lambda^{\alpha}U(\tau+s) \|_{L^2(\Omega)}^{2} 
+ \| \Lambda^{\beta}\Theta (\tau+s) \|_{L^2(\Omega)}^{2} \right)
\exp \left(\int_{\tau+s}^{t-s}
\gamma(r) dr \right) \\
\leq
& 
\left( \| \Lambda^{\alpha}U(\tau+s) \|_{L^2(\Omega)}^{2} 
+ \| \Lambda^{\beta}\Theta (\tau+s) \|_{L^2(\Omega)}^{2} \right)
\exp \left(\int_{\tau}^{t-s}
\gamma(r) dr \right).
\end{aligned}
\end{equation}
According to \cref{eq030802,eq030804}, it follows that
\begin{equation}\label{eq030805}
\begin{aligned}
&
\nu \int_{0}^{\ell} \| \Lambda^{2\alpha} U(t+\zeta) \|_{L^2(\Omega)}^{2} d\zeta
+\kappa \int_{0}^{\ell} \| \Lambda^{2\beta} \Theta (t+\zeta) \|_{L^2(\Omega)}^{2} d\zeta
\\
\leq
& \mathcal{N}_{\ell}(t,\tau) \left( \| \Lambda^{\alpha}U(\tau+s) \|_{L^2(\Omega)}^{2} 
+ \| \Lambda^{\beta}\Theta (\tau+s) \|_{L^2(\Omega)}^{2} \right)
\exp \left(\int_{\tau}^{t+\ell}
\gamma(r) dr \right) \\
&+ \left( \| \Lambda^{\alpha}U(\tau+s) \|_{L^2(\Omega)}^{2} 
+ \| \Lambda^{\beta}\Theta (\tau+s) \|_{L^2(\Omega)}^{2} \right)
\exp \left(\int_{\tau}^{t-s}
\gamma(r) dr \right)\\
\leq
& 2 \mathcal{N}_{\ell}(t,\tau) \mathcal{O}_{\ell}(t,\tau) \left( \| \Lambda^{\alpha}U(\tau+s) \|_{L^2(\Omega)}^{2} 
+ \| \Lambda^{\beta}\Theta (\tau+s) \|_{L^2(\Omega)}^{2} \right),
\end{aligned}
\end{equation}
where
\begin{align*}
\mathcal{N}_{\ell}(t,\tau) = \int_{\tau+\frac{\ell}{2}}^{t+\ell} \gamma(\zeta) d\zeta + 1,\quad
\mathcal{O}_{\ell}(t,\tau) = \exp \left(\int_{\tau}^{t+\ell}
\gamma(r) dr \right).
\end{align*}
Integrating \cref{eq030805} with respect to $s$ over $(0,\frac{\ell}{2})$, we obtain
\begin{equation}\label{eq030806}
\begin{aligned}
&
 \int_{0}^{\ell} \| \Lambda^{2\alpha} U(t+\zeta) \|_{L^2(\Omega)}^{2} d\zeta
+ \int_{0}^{\ell} \| \Lambda^{2\beta} \Theta (t+\zeta) \|_{L^2(\Omega)}^{2} d\zeta
\\
\leq
& \frac{4 \mathcal{N}_{\ell}(t,\tau) \mathcal{O}_{\ell}(t,\tau) }{\min\{\nu,\kappa\}\ell} 
\int_{0}^{\frac{\ell}{2}}
\left( \| \Lambda^{\alpha}U(\tau+s) \|_{L^2(\Omega)}^{2} 
+ \| \Lambda^{\beta}\Theta (\tau+s) \|_{L^2(\Omega)}^{2} \right) ds,
\end{aligned}
\end{equation}
Given the boundedness of $\mathcal{N}_{\ell}(t,\tau)$ and $\mathcal{O}_{\ell}(t,\tau)$ for each fixed $t \in [\tau + \ell, +\infty)$, we deduce that
\begin{equation}\label{eq030807}
\begin{aligned}
&
\int_{0}^{\ell} 
\left( \| \Lambda^{2\alpha} U(t+\zeta) \|_{L^2(\Omega)}^{2} + \| \Lambda^{2\beta} \Theta (t+\zeta) \|_{L^2(\Omega)}^{2}  \right) d\zeta
\\
\leq
& C(\tau,t,\ell,\nu,\kappa)
\int_{0}^{\ell}
\left( \| \Lambda^{\alpha}U(\tau+s) \|_{L^2(\Omega)}^{2} 
+ \| \Lambda^{\beta}\Theta (\tau+s) \|_{L^2(\Omega)}^{2} \right) ds,
\end{aligned}
\end{equation}
and hence
$$
\| L(t, \tau)\chi_1 - L(t, \tau)\chi_2 \|_{L^2(t, t+\ell; H_\sigma^{2\alpha}(\Omega) \times H^{2\beta}(\Omega)}
\leq C(\tau,t,\ell,\nu,\kappa) \| \chi_1 - \chi_2 \|_{L^2(\tau, \tau+\ell; H_\sigma^{\alpha}(\Omega) \times H^{\beta}(\Omega))},
$$
for all $\chi_1, \chi_2 \in \mathcal{B}_0^\ell(\tau)$ and any $t \geq \tau + \ell$.

Additionally, for any $t \geq \tau + \ell$, integrating from $t-s$ to $t+r$ with $s\in [0,\frac{\ell}{2}]$ and $r\in [0,\ell]$, we get
\begin{align*}
&\| \Lambda^{\alpha}U(t+r) \|_{L^2(\Omega)}^{2} 
+ \| \Lambda^{\beta}\Theta (t+r) \|_{L^2(\Omega)}^{2} \\
\leq&
\left( \| \Lambda^{\alpha}U(t-s) \|_{L^2(\Omega)}^{2} 
+ \| \Lambda^{\beta}\Theta (t-s) \|_{L^2(\Omega)}^{2} \right)
\exp\left(\int_{t-s}^{t+r}
\gamma(r) dr\right)\\
\leq
& 
\left( \| \Lambda^{\alpha}U(\tau+s) \|_{L^2(\Omega)}^{2} 
+ \| \Lambda^{\beta}\Theta (\tau+s) \|_{L^2(\Omega)}^{2} \right)
\exp \left(\int_{\tau}^{t-s}
\gamma(r) dr \right)\exp\left(\int_{t-s}^{t+r}
\gamma(r) dr\right)\\
\leq
& 
\left( \| \Lambda^{\alpha}U(\tau+s) \|_{L^2(\Omega)}^{2} 
+ \| \Lambda^{\beta}\Theta (\tau+s) \|_{L^2(\Omega)}^{2} \right)
\exp \left(\int_{\tau}^{t+r}
\gamma(r) dr \right)\\
\leq &
\mathcal{O}_{\ell}(t,\tau)\left( \| \Lambda^{\alpha}U(\tau+s) \|_{L^2(\Omega)}^{2} 
+ \| \Lambda^{\beta}\Theta (\tau+s) \|_{L^2(\Omega)}^{2} \right).
\end{align*}
Integrating the above inequality with respect to $s$ from $0$ to $\frac{\ell}{2}$ yields
\begin{equation}\label{eq030808}
\begin{aligned}
&  \| \Lambda^{\alpha}U(t+r) \|_{L^2(\Omega)}^{2} 
+ \| \Lambda^{\beta}\Theta (t+r) \|_{L^2(\Omega)}^{2} \\
\leq & \frac{2}{\ell}
\mathcal{O}_{\ell}(t,\tau)\int_{0}^{\frac{\ell}{2}} \left( \| \Lambda^{\alpha}U(\tau+s) \|_{L^2(\Omega)}^{2} 
+ \| \Lambda^{\beta}\Theta (\tau+s) \|_{L^2(\Omega)}^{2} \right) ds \\
\leq & \frac{2}{\ell}
\mathcal{O}_{\ell}(t,\tau)\int_{0}^{\ell} \left( \| \Lambda^{\alpha}U(\tau+s) \|_{L^2(\Omega)}^{2} 
+ \| \Lambda^{\beta}\Theta (\tau+s) \|_{L^2(\Omega)}^{2} \right) ds.
\end{aligned}
\end{equation}
Leveraging \cref{eq0501}, we derive that 
\begin{equation*}
\begin{cases}
U_t = - \nu \Lambda^{2\alpha} U - \mathbb{P} \left( (u_1 \cdot \nabla) U \right) - \mathbb{P} \left( (U \cdot \nabla) u_2 \right) + \mathbb{P} (\Theta e_3), \\
\Theta_t = - \kappa \Lambda^{2\beta} \Theta - (u_1 \cdot \nabla) \Theta - (U \cdot \nabla) \theta_2.
\end{cases}
\end{equation*}
By H\"{o}lder’s inequality, we deduce that
\begin{equation}\label{eq030809}
\begin{aligned}
&\| U_{t} \|_{L^{2}(\Omega)}\\
\leq
& \nu \| \Lambda^{2\alpha} U \|_{L^2(\Omega)}
+ \| (u_1 \cdot \nabla) U \|_{L^{2}(\Omega)}
+ \| (U \cdot \nabla) u_2 \|_{L^{2}(\Omega)}
+ \| \Theta \|_{L^{2}(\Omega)} 
\\ 
\leq
& \nu \| \Lambda^{2\alpha} U \|_{L^2(\Omega)}
+ \| u_1  \|_{L^{12}(\Omega)}
\| \nabla U \|_{L^{\frac{12}{5}}(\Omega)}
+ \| U \|_{L^{12}(\Omega)}
\| \nabla  u_2 \|_{L^{\frac{12}{5}}(\Omega)}
+ \| \Theta \|_{L^{2}(\Omega)} 
\\ 
\leq
& \nu \| \Lambda^{2\alpha} U \|_{L^2(\Omega)}
+ C\| \Lambda^{\alpha}u_1  \|_{L^{2}(\Omega)}
\| \Lambda^{\alpha} U \|_{L^{2}(\Omega)}
+ C\| \Lambda^{\alpha}U \|_{L^{2}(\Omega)}
\| \Lambda^{\alpha} u_2 \|_{L^{2}(\Omega)}
+ \| \Theta \|_{L^{2}(\Omega)} 
\\ 
\end{aligned}
\end{equation}
and
\begin{equation}\label{eq030810}
\begin{aligned}
\| \Theta_t \|_{L^2(\Omega)} 
\leq
& \kappa \| \Lambda^{2\beta} \Theta  \|_{L^2(\Omega)}
+ 
\| (u_1 \cdot \nabla) \Theta  \|_{L^{2}(\Omega)}
+ \| (U \cdot \nabla) \theta_2 \|_{L^{2}(\Omega)}\\
\leq
& \kappa \| \Lambda^{2\beta} \Theta  \|_{L^2(\Omega)}
+ 
\| u_1  \|_{L^{12}(\Omega)}
\| \nabla \Theta \|_{L^{\frac{12}{5}}(\Omega)}
+ \| U \|_{L^{12}(\Omega)}
\| \nabla  \theta_2 \|_{L^{\frac{12}{5}}(\Omega)}\\
\leq
& \kappa \| \Lambda^{2\beta} \Theta  \|_{L^2(\Omega)}
+ 
\| \Lambda^{\alpha}u_1  \|_{L^{2}(\Omega)}
\| \Lambda^{2\beta}\Theta \|_{L^{2}(\Omega)}
+ \| \Lambda^{\alpha}U \|_{L^{2}(\Omega)}
\|  \Lambda^{2\beta} \theta_2 \|_{L^{2}(\Omega)}.
\end{aligned}
\end{equation}
Then we have
\begin{equation}\label{eq030811}
\begin{aligned}
\| U_{t} \|_{L^{2}(t,t+\ell;L^{2}(\Omega))}
\leq
& \nu \| U \|_{L^{2}(t,t+\ell;H_\sigma^{2\alpha}(\Omega))}
+ C\| u_1 \|_{L^{\infty}(t,t+\ell;H_\sigma^{\alpha}(\Omega))}
\| U \|_{L^{2}(t,t+\ell;H_\sigma^{\alpha}(\Omega))} \\
&+ C
\| u_2 \|_{L^{\infty}(t,t+\ell;H_\sigma^{\alpha}(\Omega))}
\| U \|_{L^{2}(t,t+\ell;H_\sigma^{\alpha}(\Omega))}
+ \| \Theta \|_{L^{2}(t,t+\ell;L^{2}(\Omega))} 
\\ 
\leq
& \nu \| U \|_{L^{2}(t,t+\ell;H_\sigma^{2\alpha}(\Omega))}
+ C\| u_1 \|_{L^{\infty}(t,t+\ell;H_\sigma^{\alpha}(\Omega))}
\| U \|_{L^{2}(t,t+\ell;H_\sigma^{2\alpha}(\Omega))} \\
&+ C
\| u_2 \|_{L^{\infty}(t,t+\ell;H_\sigma^{\alpha}(\Omega))}
\| U \|_{L^{2}(t,t+\ell;H_\sigma^{2\alpha}(\Omega))}
+ C \| \Theta \|_{L^{2}(t,t+\ell;H^{2\beta}(\Omega))} 
\end{aligned}
\end{equation}
and
\begin{equation}\label{eq030812}
\begin{aligned}
\| \Theta_t \|_{L^{2}(t,t+\ell;L^{2}(\Omega))} 
\leq
& \kappa \| \Theta  \|_{L^{2}(t,t+\ell;H^{2\beta}(\Omega))}
+ 
\| u_1 \|_{L^{\infty}(t,t+\ell;H_\sigma^{\alpha}(\Omega))}
\| \Theta  \|_{L^{2}(t,t+\ell;H^{2\beta}(\Omega))} \\
&
+ \|U \|_{L^{\infty}(t,t+\ell;H_\sigma^{\alpha}(\Omega))}
\| \theta_2 \|_{L^{2}(t,t+\ell;H^{2\beta}(\Omega))}.
\end{aligned}
\end{equation}
By \cref{thm0203}, the norms $\| u_1 \|_{L^{\infty}(t,t+\ell;H_\sigma^{\alpha}(\Omega))},\ \| u_2 \|_{L^{\infty}(t,t+\ell;H_\sigma^{\alpha}(\Omega))}$, and $\| \theta_2 \|_{L^{2}(t,t+\ell;H^{2\beta}(\Omega))}$ are uniformly bounded. Combining \cref{eq030807,eq030808,eq030811,eq030812}, we obtain 
\begin{equation}\label{eq030813}
\begin{aligned}
&
\int_{0}^{\ell} 
\left( \| U_{t}(t+\zeta) \|_{L^2(\Omega)}^{2} + \| \Theta_{t}(t+\zeta) \|_{L^2(\Omega)}^{2}  \right) d\zeta
\\
\leq
& C(\tau,t,\ell,\nu,\kappa)
\int_{0}^{\ell}
\left( \| \Lambda^{\alpha}U(\tau+s) \|_{L^2(\Omega)}^{2} 
+ \| \Lambda^{\beta}\Theta (\tau+s) \|_{L^2(\Omega)}^{2} \right) ds,
\end{aligned}
\end{equation}
and hence
$$
\|\left( L(t, \tau)\chi_1 - L(t, \tau)\chi_2  \right)_{t}\|_{L^2(t, t+\ell; H_{1}\times H_{2})}
\leq C(\tau,t,\ell,\nu,\kappa) \| \chi_1 - \chi_2 \|_{L^2(\tau, \tau+\ell; H_\sigma^{\alpha}(\Omega) \times H^{\beta}(\Omega))},
$$
for any $\chi_1, \chi_2 \in \mathcal{B}_0^\ell(\tau)$ and any $t \geq \tau + \ell$.
\end{proof}

Employing the \(\ell\)-trajectory method and the smoothing property of the process \(\{ L(t, \tau) \}_{t \geq \tau}\) generated by \cref{eq0101}, we establish the existence of pullback exponential attractors.

\begin{theorem}\label{thm0402}
Assume that \cref{ass:forcing} holds. For any $\eta \in (0,\frac{1}{2})$, there exists a pullback exponential attractor $\hat{\mathcal{M}}_{\ell} = \hat{\mathcal{M}}_{\ell}^{\eta} = \left\{ \mathcal{M}_{\ell} (t): t \in \mathbb{R} \right\}$ for the process $ \{L(t,\tau)\}_{t\geq \tau} $ generated by \cref{eq0101}. The sections $\mathcal{M}_{\ell}(t)$ are compact subsets of $X_{\ell}$, and their fractal dimension in $ L^2(\tau,\tau+\ell;H_\sigma^{\alpha}(\Omega) \times H^{\beta}(\Omega))$ can be estimated by
\[
\dim_F(\mathcal{M}_{\ell}(t)) \leq \log_{\frac{1}{2\eta}}\bigl(N_{\eta}^{X_{\ell}}(B_{W_{\ell}}(0;1))\bigr),
\]
where $B_X(x_0, R)$ denotes the $R$-ball in a normed space $X$ centered at $x_0$, and $N_{\eta}^{X_{\ell}}(B_{W_{\ell}}(0, 1))$ denotes the minimal number of $\eta$-balls in $X_{\ell}$ required to cover the unit ball $B_{W_{\ell}}(0, 1)$ in $W_{\ell}$.
\end{theorem}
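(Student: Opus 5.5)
We will deduce \cref{thm0402} from an abstract existence criterion for pullback exponential attractors of processes with the smoothing property, in the form of You \cite{YouB2021} (which adapts the constructions of \cite{EZM2005,CS2013} to the $\ell$-trajectory setting). That criterion needs four inputs: a compact, positively invariant, pullback absorbing family in $X_\ell$; a compact embedding $W_\ell\subset\subset X_\ell$; a uniform smoothing (Lipschitz) estimate $X_\ell\to W_\ell$ on that family for a fixed time lag; and Hölder continuity in time of the process. It then produces the family $\hat{\mathcal M}_\ell$ with the stated dimension bound $\log_{1/(2\eta)}N_\eta^{X_\ell}(B_{W_\ell}(0;1))$. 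Our task is therefore to verify each input.

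\emph{Absorbing family and compact embedding.} For the absorbing family we use $\hat{\mathcal B}_0^\ell=\{\mathcal B_0^\ell(t)\}$. Positive invariance $L(t,\tau)\mathcal B_0^\ell(\tau)\subset\mathcal B_0^\ell(t)$ and closedness come from \cref{lem0307}. Pullback absorption of every $B_\ell\in\mathcal D_\ell$ comes from \cref{lem0304} and \cref{cor01}. Because $L(t,\tau)\mathcal B_0^\ell(\tau)\subset\mathcal B_1^\ell(t)$ for $t-\tau\ge\tau_5$, we can replace $\mathcal B_0^\ell$ by the absorbing family $\overline{L(t,t-\tau_5)\mathcal B_0^\ell(t-\tau_5)}$. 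This family is bounded in $W_\ell$, hence compact in $X_\ell$ by \cref{ALlem}. Applying \cref{ALlem} with $X_0=H^{2\alpha}_\sigma\times H^{2\beta}$, $X=H^\alpha_\sigma\times H^\beta$, $X_1=H_1\times H_2$ and $p_1=p_2=2$ also gives the compact embedding $W_\ell\subset\subset X_\ell$. This embedding is exactly what makes $N_\eta^{X_\ell}(B_{W_\ell}(0;1))$ finite.

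\emph{Smoothing estimate.} We fix the lag $T^*=\ell$ and take the discrete times $t_n=t-nT^*$. \cref{lem0401} gives
\[
\|L(t_{n},t_{n+1})\chi_1-L(t_n,t_{n+1})\chi_2\|_{W_\ell}\le K\|\chi_1-\chi_2\|_{X_\ell}
\]
on $\mathcal B_0^\ell(t_{n+1})$. The constant $K$ must be uniform in $n$ and $t$, but in \cref{lem0401} it is $C(\tau,t,\ell,\nu,\kappa)$. We make it uniform by returning to the explicit form $2\mathcal N_\ell\mathcal O_\ell$ together with \cref{eq030808,eq030811,eq030812}. These factors depend only on $\int_{\tau}^{\tau+3\ell}\gamma$ and on $L^\infty(H^\alpha)$ and $L^2(H^{2\alpha})$ bounds of the two solutions. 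On the absorbing family these quantities are controlled by $\rho_4$ and $\rho_5$, independently of $\tau$. Here we use that $\|\Lambda^{5/4}u\|\le C\|\Lambda^\alpha u\|$ since $\alpha>5/4$, and that $\|\Lambda^{5/4}\theta_2\|$ is bounded by the $H^{2\beta}$ norm via interpolation because $2\beta>5/4$. Once $K$ is uniform, the covering step is standard. Cover $\mathcal B_0^\ell(t_{n+1})$ by $\varepsilon$-balls in $X_\ell$. Their images lie in $K\varepsilon$-balls of $W_\ell$. Each such ball is covered by $N_\eta^{X_\ell}(B_{W_\ell}(0;1))$ balls of $X_\ell$-radius $K\eta\varepsilon$. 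Choosing the scaling so that the radius contracts by the factor $2\eta<1$ (the factor $2$ accounts for recentring into the set), we obtain the sets $E^{(n)}(t)$. Taking $\mathcal M_\ell(t)=\overline{\bigcup_n \dots}$ as in \cite{CS2013} yields compactness, positive semi-invariance, and the dimension bound. Exponential attraction with rate $\omega=\log(1/(2\eta))/T^*$ follows from the contraction together with absorption.

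\emph{Hölder continuity in time (main obstacle).} Passing from discrete to continuous time requires that $(s,\chi)\mapsto L(t,t-s)\chi$ be Lipschitz or Hölder for $s\in[T^*,2T^*]$, uniformly on the absorbing family. In $\chi$ this follows from \cref{lem0308}, whose constant is made uniform by the same argument as above. In $s$ we use
\[
\|\chi(\cdot+h)-\chi(\cdot)\|_{L^2(H^\alpha\times H^\beta)}\le h^{1/2}\,\|\chi_t\|_{L^2(t,t+\ell+h;H^\alpha\times H^\beta)}.
\]
This requires bounding $\chi_t$ in $L^2(H^\alpha\times H^\beta)$, which is more than the $L^2(H_1\times H_2)$ bound of \cref{lem0306}. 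Our first approach is to interpolate instead:
\[
\|\chi(\cdot+h)-\chi\|_{L^2(H^\alpha)}\le \|\chi(\cdot+h)-\chi\|_{L^2(L^2)}^{1/2}\,\|\chi(\cdot+h)-\chi\|_{L^2(H^{2\alpha})}^{1/2}\le C h^{1/2}\rho_5^{1/4}\rho_4^{1/4}.
\]
This gives $\tfrac12$-Hölder continuity in $s$ uniformly, with the same estimate for $\theta$. The resulting Hölder exponent enters the fractal dimension only through \cref{weishuguji} for the time-continuous hull. To match the stated bound exactly, we follow \cite{YouB2021} and take the time-continuous closure only over the compact interval $[T^*,2T^*]$. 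If this step breaks down, we would take the restriction $\eta<\tfrac12$ as the available room and absorb the Hölder loss into the choice of $\eta$.
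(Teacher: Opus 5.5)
\textbf{The gap is in your passage from discrete to continuous time, which is exactly where the stated dimension bound is decided.} You build $\mathcal M_\ell(t)$ as a hull $\bigcup_{s\in[T^*,2T^*]}L(t,t-s)(\cdot)$ of discrete sets, so you need H\"older continuity in $s$. Granting your interpolation estimate, you only get $\tfrac12$-H\"older continuity in $s$ (and Lipschitz continuity in $\chi$). Covering $[T^*,2T^*]$ by intervals of length $\varepsilon^2$ then gives at best
$\dim_F(\mathcal M_\ell(t))\le\log_{\frac{1}{2\eta}}\bigl(N_{\eta}^{X_{\ell}}(B_{W_{\ell}}(0;1))\bigr)+2$.
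Going through \cref{weishuguji} with exponent $\tfrac12$ is even worse. Restricting the hull to a compact time interval does not remove this additive term. Nor can it be ``absorbed into the choice of $\eta$'': the theorem asserts, for each fixed $\eta\in(0,\tfrac12)$, a bound with exactly that right-hand side. As written, your argument yields a pullback exponential attractor, but not the claimed estimate.

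\textbf{The missing idea is that no time regularity is needed.} The paper, following \cite{CS2013}, uses a fixed grid $kt_1$ ($k\in\mathbb{Z}$) instead of your $t$-dependent times $t-nT^*$. It builds discrete sets $\tilde{\mathcal E}_\ell(k)$, unions of centre sets with covering radii $(2\eta)^nR$, which are positively invariant along the grid. It then sets $\mathcal M_\ell(t)=\overline{L(t,kt_1)\tilde{\mathcal E}_\ell(k)}^{X_\ell}$ for $t\in[kt_1,(k+1)t_1)$. The required properties follow without any H\"older step:
\begin{itemize}
\item Semi-invariance for arbitrary $s\le t$ follows from the grid invariance and the continuity of $L$.
\item Exponential attraction follows from the discrete attraction and the Lipschitz bound of \cref{lem0308} for $L(t,kt_1)$. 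Its Gronwall estimate holds for every $t\ge\tau$, and its constant is uniform for $t-kt_1\in[0,t_1]$ by the same $\rho_4,\rho_5$ argument you give for \cref{lem0401}.
\item Lipschitz images and closures do not increase fractal dimension, so $\dim_F\mathcal M_\ell(t)\le\dim_F\tilde{\mathcal E}_\ell(k)$, which is the stated bound.
\end{itemize}
The only price is that $t\mapsto\mathcal M_\ell(t)$ may jump at grid points, which \cref{lahuizhishudingyi} permits. The paper's closing remark says precisely this: H\"older continuity in time would only be needed for the union-type construction you chose.

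Your insistence on making the constant of \cref{lem0401} uniform in $\tau$ is a real point that the paper's sketch passes over, and you should keep it.

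One further caution about your covering step. Covering the image of an $\varepsilon$-ball by $X_\ell$-balls of radius $K\eta\varepsilon$ gives contraction factor $2K\eta$, not $2\eta$. No choice of scaling removes $K$ unless you replace $N_\eta^{X_\ell}(B_{W_\ell}(0;1))$ by $N_{\eta/K}^{X_\ell}(B_{W_\ell}(0;1))$, which is the usual form of this bound. The paper's sketch is silent on this as well.
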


\begin{proof}
Following the proof framework of \cite{LY2020,AT2021,YouB2023,YouB2021}, and combining the results obtained earlier in this paper with the $\ell$-trajectory method, the existence of a pullback exponential attractor in \(X_{\ell}\) — corresponding to the process \(\{L(t, \tau)\}_{t \geq \tau}\) generated by the 3D non-autonomous Boussinesq equations with fractional Laplacian — can be constructed and proven.

First, by utilizing the local $L^2$-integrability of the external force terms and the uniform boundedness estimates of solutions in \cref{lem0304}, it is proven that the system admits a pullback absorbing set \(\hat{\mathcal{B}}_0^{\ell} = \{\mathcal{B}_0^{\ell}(t) \mid t \in \mathbb{R}\}\) in \(X_{\ell}\). Meanwhile, it is verified that the process \(L(t, \tau)\) possesses smoothness on \(\mathcal{B}_0^{\ell}(\tau)\) in \cref{lem0401}. Moreover, the compact embedding relation \(W_{\ell} \subset \subset X_{\ell}\) holds for \(W_{\ell}\), which lays the foundation for subsequent finite covering and precompactness analysis.

Next, focusing on discrete time nodes (where \(t = kt_1\) and \(\tau = (k-1)t_1\) with \(k \in \mathbb{Z}\) and \(t_1 > 0\) as a fixed time step), a sequence of center sets \(V^n(k)\) is constructed via mathematical induction: in the initial step, the single-element center set \(V^0(k)\) is defined; in each subsequent step, relying on the compact embedding property of \(W_{\ell}\), \(L(kt_1, (k-n)t_1)\mathcal{B}_0^{\ell}((k-n)t_1)\) is covered with a finite number of \(X_{\ell}\)-balls, where the covering radius decays exponentially in the form of \((2\eta)^n R\) ($\eta \in (0, \frac{1}{2})$ and \(R > 0\) denotes a uniform bound). Furthermore, the precompact set \(\tilde{\mathcal{E}}_{\ell}(k)\) is defined as the union of center sets of all orders, and its precompactness in \(X_{\ell}\), the uniform boundedness of its fractal dimension, and its exponential attractivity for bounded subsets of \(X_{\ell}\) are verified by using \cref{lem0307}.  

Finally, the discrete-time results are extended to the continuous time process. For any \(t \in [kt_1, (k+1)t_1)\), \(\mathcal{M} _{\ell}(t)\) is defined as the closure of \(L(t, kt_1)\tilde{\mathcal{E}}_{\ell}(k)\) in \(X_{\ell}\). Leveraging the continuity of the process \(L(t, \tau)\), the compactness of \( \mathcal{M} _{\ell}(t)\) (preservation of compactness by closure), its positive semi-invariance (i.e., \(L(t, s)\mathcal{M} _{\ell}(s) \subset \mathcal{M} _{\ell}(t)\) holds for all \(t \geq s\)), and its exponential attractivity for bounded subsets in the continuous-time setting are sequentially verified. Ultimately, it is confirmed that the family of sets \(\hat{\mathcal{M} }_{\ell} = \{\mathcal{M} _{\ell}(t) \mid t \in \mathbb{R}\}\) satisfies all the defining conditions of a pullback exponential attractor, thereby completing the proof of the existence of the pullback exponential attractor for the 3D Boussinesq system in \(X_{\ell}\).
\end{proof}

\subsection{The existence of pullback exponential attractors in \texorpdfstring{$H_\sigma^{\alpha}(\Omega) \times H^{\beta}(\Omega)$}{HH}}\label{subsec0402}
\begin{lemma}\label{lem0403}
Assume that \cref{ass:forcing} holds. Therefore, the mapping $e_1: \mathcal{B}_0^\ell(\tau-\ell) \to B_1(\tau)=e_1(\mathcal{B}_0^\ell(\tau-\ell))$ is Lipschitz continuous for each fixed $\tau \in \mathbb{R}$. Specifically, for any $\ell$-trajectories $\chi_1, \chi_2 \in \mathcal{B}_0^\ell(\tau-\ell)$, there exists a constant $\mu > 0 $ (depending on $\ell$) satisfying
\begin{align*}
\| e_{1}(\chi_{1}) - e_{1}(\chi_{2}) \|_{H_\sigma^{\alpha}(\Omega) \times H^{\beta}(\Omega)}^{2} 
\leq
\mu
\int_{0}^{\ell}
\| \chi_{1}(\tau+\zeta) - \chi_{2}(\tau+\zeta) \|_{H_\sigma^{\alpha}(\Omega) \times H^{\beta}(\Omega)}^{2} 
d\zeta.
\end{align*}
\end{lemma}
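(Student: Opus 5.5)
The plan is to reuse the differential inequality for differences of strong solutions from the proof of \cref{lem0308}. Write $\chi_i$ as the $\ell$-trajectory on $[\tau-\ell,\tau]$ of the strong solution $(u_i,\theta_i)$, and set $(U,\Theta)=(u_1-u_2,\theta_1-\theta_2)$. Then $e_1(\chi_i)=(u_i(\tau),\theta_i(\tau))$, and \cref{eq0503} gives
\[
\frac{d}{dt}\Bigl(\|\Lambda^\alpha U(t)\|_{L^2(\Omega)}^2+\|\Lambda^\beta\Theta(t)\|_{L^2(\Omega)}^2\Bigr)\le \gamma(t)\Bigl(\|\Lambda^\alpha U(t)\|_{L^2(\Omega)}^2+\|\Lambda^\beta\Theta(t)\|_{L^2(\Omega)}^2\Bigr),
\]
where $\gamma$ is the coefficient defined there. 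Since $\alpha>\frac54$, the Sobolev embedding and Poincar\'e inequality give $\|\Lambda^{5/4}u_i\|_{L^2(\Omega)}\le C\|\Lambda^\alpha u_i\|_{L^2(\Omega)}$. Likewise, interpolation between $H^\beta$ and $H^{2\beta}$ controls the norm $\|\Lambda^{5/4}\theta_2\|_{L^2(\Omega)}$ that enters $\gamma$. I would check that $\frac54\le 2\beta$, which holds because $\beta>\frac58$, so $\|\Lambda^{5/4}\theta_2\|_{L^2(\Omega)}^2\le C\|\Lambda^{2\beta}\theta_2\|_{L^2(\Omega)}^2$.

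The first real step is to show that $\int_{\tau-\ell}^{\tau}\gamma(r)\,dr\le M$, with $M$ independent of $\chi_1,\chi_2\in\mathcal B_0^\ell(\tau-\ell)$. Because $e_0(\chi_i)\in B_1(\tau-\ell)$ and $B_1(\cdot)$ is positively invariant ($U(t,s)B_1(s)\subset B_1(t)$), each $u_i(t)$ stays in the bounded set $\bigcup_{t}B_1(t)\subset$ the $\rho_4$-type ball on $[\tau-\ell,\tau]$. This gives a uniform $L^\infty$-in-time bound on $\|\Lambda^\alpha u_i\|_{L^2(\Omega)}$. The $\theta_2$ term requires $\int_{\tau-\ell}^\tau\|\Lambda^{2\beta}\theta_2\|_{L^2(\Omega)}^2\,dr$. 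For this I would integrate the energy inequality \cref{eq030302} over $[\tau-\ell,\tau]$, starting from data in the bounded set $B_1(\tau-\ell)$, which yields a bound depending only on $\rho_4$, $G_g$ and $\ell$; this is the estimate behind \cref{cor01} and \cref{lem0306}. Combining the two pieces yields $M$.

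This uniformity is where I expect the main obstacle. The bound must hold on the initial window $[\tau-\ell,\tau]$ and not merely after an absorbing time, and the exponent $\frac{16\beta-5}{8\beta-5}$ on $\|\Lambda^{5/4}u_1\|_{L^2(\Omega)}$ must be handled. That exponent is harmless precisely because $u_1$ is bounded in $L^\infty(H^\alpha)$, which is why I would rely on the invariance of $B_1$ rather than on time-averaged bounds.

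With $M$ in hand, Gronwall's inequality on $[\tau-\ell+\zeta,\tau]$ for $\zeta\in[0,\ell]$ gives
\[
\|\Lambda^\alpha U(\tau)\|_{L^2(\Omega)}^2+\|\Lambda^\beta\Theta(\tau)\|_{L^2(\Omega)}^2\le e^{M}\Bigl(\|\Lambda^\alpha U(\tau-\ell+\zeta)\|_{L^2(\Omega)}^2+\|\Lambda^\beta\Theta(\tau-\ell+\zeta)\|_{L^2(\Omega)}^2\Bigr).
\]
Integrating in $\zeta$ over $(0,\ell)$ and dividing by $\ell$ gives the claim with $\mu=e^M/\ell$. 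This is the estimate along the trajectory window on which $\chi_i$ is defined, matching the stated integral after relabelling the time variable. Lipschitz continuity of $e_1$ on $\mathcal B_0^\ell(\tau-\ell)$ follows immediately, and $B_1(\tau)\supseteq e_1(\mathcal B_0^\ell(\tau-\ell))$ by the same invariance.
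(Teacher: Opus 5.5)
Your proposal is correct and follows the same route as the paper: Gronwall applied to the difference inequality from an intermediate time of the window to its endpoint, then averaging over the starting time, which gives $\mu=\ell^{-1}\exp\bigl(\int\gamma\bigr)$. The one real addition is that you bound $\int\gamma$ uniformly over the set, using the invariance and boundedness of $B_1(\cdot)$ and the $L^2_tH^{2\beta}$ estimate for $\theta_2$, whereas the paper only says its constant is finite for each pair of initial data; your version is what actually makes $\mu$ a Lipschitz constant on $\mathcal{B}_0^\ell(\tau-\ell)$.
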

\begin{proof}
For all fixed $\tau \in \mathbb{R}$ and any $\chi_{1}, \chi_{2} \in \mathcal{B}_{0}^{\ell}(\tau)$, let $L(t, \tau)\chi_1 = (u_1(t), \theta_1(t))$ and $L(t, \tau)\chi_2 = (u_2(t), \theta_2(t))$ for any fixed $t \geq \tau + \ell$. Let $(U, \Theta) = (u_1 - u_2, \theta_1 - \theta_2)$. Then we can derive that
\begin{equation}\label{eq050201}
\begin{aligned}
& \frac{d}{dt}
\left( 
\| \Lambda^{\alpha}U(t) \|_{L^2(\Omega)}^{2} + \| \Lambda^{\beta}\Theta (t) \|_{L^2(\Omega)}^{2} 
\right)
+\nu \| \Lambda^{2\alpha}U(t) \|_{L^2(\Omega)}^{2} 
+\kappa \| \Lambda^{2\beta}\Theta(t) \|_{L^2(\Omega)}^{2} 
\\
\leq
& 
\gamma(t)
\left( 
\| \Lambda^{\alpha}U(t) \|_{L^2(\Omega)}^{2} + \| \Lambda^{\beta}\Theta (t) \|_{L^2(\Omega)}^{2}
\right),
\end{aligned}
\end{equation}
where
$$
\gamma(t)=
C \left( 1+
\| \Lambda^{\frac{5}{4}}u_1 (t) \|_{L^2(\Omega)}^{2} 
+\| \Lambda^{\frac{5}{4}}u_2 (t) \|_{L^2(\Omega)}^{2} 
+ \| \Lambda^{\frac{5}{4}} u_1 (t) \|_{L^2(\Omega)}^{\frac{16\beta-5}{8\beta-5}}
+ \| \Lambda^{\frac{5}{4}}\theta_2 (t) \|_{L^2(\Omega)}^{2} 
\right).
$$
For any $\tau \in \mathbb{R}$ and $\zeta\in(0,\ell)$, by applying Gronwall’s lemma to \cref{eq050201}, we note that
\begin{align*}
&\| \Lambda^{\alpha}U(\tau+\ell) \|_{L^2(\Omega)}^{2} 
+ \| \Lambda^{\beta}\Theta (\tau+\ell) \|_{L^2(\Omega)}^{2} \\
\leq&
\left( \| \Lambda^{\alpha}U(\tau+\zeta) \|_{L^2(\Omega)}^{2} 
+ \| \Lambda^{\beta}\Theta (\tau+\zeta) \|_{L^2(\Omega)}^{2} \right)
\exp \left(\int_{\tau+\zeta}^{\tau+\ell}
\gamma(s) ds \right)\\
\leq&
\left( \| \Lambda^{\alpha}U(\tau+\zeta) \|_{L^2(\Omega)}^{2} 
+ \| \Lambda^{\beta}\Theta (\tau+\zeta) \|_{L^2(\Omega)}^{2} \right)
\exp \left(\int_{\tau}^{\tau+\ell}
\gamma(s) ds \right).
\end{align*}
Integrating the above inequality with respect to $\zeta$ over $(0,\ell)$, we obtain
\begin{equation}\label{eq050202}
\begin{aligned}
&\| \Lambda^{\alpha}U(\tau+\ell) \|_{L^2(\Omega)}^{2} 
+ \| \Lambda^{\beta}\Theta (\tau+\ell) \|_{L^2(\Omega)}^{2} \\
\leq& \frac{1}{\ell}
\exp \left(\int_{\tau}^{\tau+\ell}
\gamma(s) ds \right)\int_{0}^{\ell}\left( \| \Lambda^{\alpha}U(\tau+\zeta) \|_{L^2(\Omega)}^{2} 
+ \| \Lambda^{\beta}\Theta (\tau+\zeta) \|_{L^2(\Omega)}^{2} \right)d\zeta
\\
\leq& \mu
\int_{0}^{\ell}\left(\| \Lambda^{\alpha}U(\tau+\zeta) \|_{L^2(\Omega)}^{2} 
+ \| \Lambda^{\beta}\Theta (\tau+\zeta) \|_{L^2(\Omega)}^{2} \right)d\zeta,
\end{aligned}
\end{equation}
where $\mu=\frac{1}{\ell}\mathcal{O}_{\ell}(\tau)$, $\mathcal{O}_{\ell}(\tau) = \exp \left(\int_{\tau}^{\tau+\ell}
\gamma(r) dr \right)$ is a finite constant depending on $(u_1(\tau), \theta_1(\tau))$ and $(u_2(\tau), \theta_2(\tau))$, as follows from \cref{thm0203}. 

Consequently, appealing to \cref{eq050202}, we obtain that $e_1: \mathcal{B}_0^\ell(\tau-\ell) \to B_1(\tau)$ is Lipschitz continuous.
\end{proof}

\begin{theorem}\label{thm0404}
Assume that \cref{ass:forcing} holds. For any \(\eta \in (0, \frac{1}{2})\), there exists a pullback exponential attractor $\hat{\mathcal{M}} = \hat{\mathcal{M}}^\eta = \left\{ \mathcal{M} (t): t \in \mathbb{R} \right\} = \left\{ e_{1}(\mathcal{M}_{\ell} (t-\ell)) : t \in \mathbb{R} \right\} $ for the process \(\{ U(t, \tau) \}_{t \geq \tau}\) generated by \cref{eq0101} in $H_\sigma^{\alpha}(\Omega) \times H^{\beta}(\Omega)$.
\end{theorem}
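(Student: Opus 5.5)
The plan is to transfer the pullback exponential attractor $\hat{\mathcal{M}}_\ell$ of \cref{thm0402} to the phase space $H_\sigma^{\alpha}(\Omega)\times H^{\beta}(\Omega)$ through the endpoint map $e_1$, verifying the four properties of \cref{lahuizhishudingyi} one at a time. Set $\mathcal{M}(t):=e_1(\mathcal{M}_\ell(t-\ell))$. The construction in \cref{thm0402} produces $\mathcal{M}_\ell(s)$ as a subset of the absorbing family $\mathcal{B}_0^\ell(s)$, up to the closure property of \cref{lem0307}. Hence \cref{lem0403} applies, and $e_1$ is Lipschitz on $\mathcal{M}_\ell(t-\ell)$ from the $L^2(\cdot;H_\sigma^\alpha\times H^\beta)$-topology of $X_\ell$ into $H_\sigma^\alpha(\Omega)\times H^\beta(\Omega)$.

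\textbf{Compactness and dimension.} Since $\mathcal{M}_\ell(t-\ell)$ is nonempty and compact in $X_\ell$ and $e_1$ is continuous on it, $\mathcal{M}(t)$ is nonempty and compact. By \cref{weishuguji} with Hölder exponent $1$,
\[
\dim_F\mathcal{M}(t)\le \dim_F\mathcal{M}_\ell(t-\ell)\le \log_{\frac{1}{2\eta}}\bigl(N_\eta^{X_\ell}(B_{W_\ell}(0;1))\bigr),
\]
and the right-hand side is uniform in $t$.

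\textbf{Semi-invariance.} I will use the conjugacy $e_1\circ L(t,s)=U(t,s)\circ e_1$, which holds because $L(t,s)\chi$ is the trajectory of $\chi$ shifted by $t-s$. With it,
\[
U(t,s)\mathcal{M}(s)=U(t,s)e_1\mathcal{M}_\ell(s-\ell)=e_1L(t-\ell,s-\ell)\mathcal{M}_\ell(s-\ell)\subset e_1\mathcal{M}_\ell(t-\ell)=\mathcal{M}(t).
\]

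\textbf{Exponential attraction.} Let $B\in\mathcal{D}$. I associate with $B$ the set of $\ell$-trajectories $B_\ell=\{\chi(\cdot,r;(u_r,\theta_r)):(u_r,\theta_r)\in B\}$ starting at the relevant initial time $r=t-s$. This set is bounded in $X_\ell$ by the continuous dependence in \cref{thm0203}, so $B_\ell\in\mathcal{D}_\ell$. For $s\ge\ell$ we have $U(t,t-s)B=e_1L(t-\ell,t-s)B_\ell$. For $s$ beyond the absorption time, $L(t-\ell,t-s)B_\ell\subset\mathcal{B}_0^\ell(t-\ell)$ by \cref{lem0304,cor01}. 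Given $y\in L(t-\ell,t-s)B_\ell$, I pick $m\in\mathcal{M}_\ell(t-\ell)$ that is close to $y$ in $X_\ell$. Applying \cref{lem0403} on $\mathcal{B}_0^\ell(t-\ell)$ then gives
\[
\operatorname{dist}_{H^\alpha_\sigma\times H^\beta}\bigl(U(t,t-s)B,\mathcal{M}(t)\bigr)\le\sqrt{\mu}\,\operatorname{dist}_{X_\ell}\bigl(L(t-\ell,t-s)B_\ell,\mathcal{M}_\ell(t-\ell)\bigr).
\]
Multiplying by $e^{\omega s}$ and using the exponential attraction of $\hat{\mathcal{M}}_\ell$ yields the limit required in \cref{lahuizhishudingyi}(iv), with the same rate $\omega$.

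\textbf{Main obstacle.} The delicate step is ensuring that the Lipschitz constant $\mu$ of $e_1$ is uniform over the sets used. The quantity $\mathcal{O}_\ell$ in \cref{lem0403} depends on $\int\gamma$ along the trajectories. I would bound it uniformly on $\mathcal{B}_0^\ell(t-\ell)$ using \cref{lem0304}: that lemma controls $\int_0^\ell\|\Lambda^{2\alpha}u\|^2+\|\Lambda^{2\beta}\theta\|^2$, and $\Lambda^{5/4}u$, $\Lambda^{5/4}\theta_2$ are dominated by interpolation between the $H^\alpha$ ($H^\beta$) and $H^{2\alpha}$ ($H^{2\beta}$) norms. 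The power $\frac{16\beta-5}{8\beta-5}$ in $\gamma$ needs care. I would handle it with the $L^\infty_tH^\alpha$ bound when $\alpha\ge 5/4$, which makes that term bounded pointwise in time. The remaining point is that the exponential attractor must lie inside $\mathcal{B}_0^\ell$ for all $t$. I would check this from the construction, in which the $\tilde{\mathcal{E}}_\ell(k)$ are built from $L(kt_1,(k-n)t_1)\mathcal{B}_0^\ell$, together with \cref{lem0307}.
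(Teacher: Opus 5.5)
Your proposal is correct and follows essentially the same route as the paper, whose proof simply invokes the standard transfer framework of the cited works: push $\hat{\mathcal{M}}_\ell$ forward through the Lipschitz endpoint map $e_1$ of \cref{lem0403}, then use \cref{weishuguji} and \cref{thm0402}. The extra details you supply are sound and fill in points the paper leaves implicit, namely a bound on $\mu$ that is uniform over $\mathcal{B}_0^\ell$ (the proof of \cref{lem0403} only records dependence on the pair of trajectories) and the inclusion $\mathcal{M}_\ell(t)\subset\mathcal{B}_0^\ell(t)$ via \cref{lem0307}. Two small corrections: the conjugacy should be stated with the $\ell$-shift, $e_1\circ L(t-\ell,s-\ell)=U(t,s)\circ e_1$, which is how you actually use it; and boundedness of $B_\ell$ in $X_\ell$ follows from the energy estimates, not from continuous dependence alone.
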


\begin{proof}
The proof of \cref{thm0404} follows the framework from \cite{LY2020,AT2021,YouB2023,YouB2021} and \cref{weishuguji,thm0402,lem0403}.
\end{proof}

By \cref{thm0404}, we also obtain the following result.
\begin{theorem}
Assume that \cref{ass:forcing} holds. There exists a pullback attractor \(\hat{\mathcal{A}} = \{\mathcal{A}(t) : t \in \mathbb{R}\} = \{e_1(\mathcal{A}_{\ell}(t-\ell)) : t \in \mathbb{R}\}\) for the process \(\{U(t, \tau)\}_{t \geq \tau}\) generated by \cref{eq0101} in $H_\sigma^{\alpha}(\Omega) \times H^{\beta}(\Omega)$, and it satisfies 
$$\mathcal{A}(t) \subset \mathcal{M}(t)$$
for all \(t \in \mathbb{R}\). Here, \(\mathcal{A}_{\ell}(t-\ell)\) denotes the section at time \(t-\ell\) of the pullback attractor \(\hat{\mathcal{A}}_{\ell} = \{\mathcal{A}_{\ell}(s) : s \in \mathbb{R}\}\), which was established in \cref{lem0309} for the process \(\{L(t, \tau)\}_{t \geq \tau}\) associated with \cref{eq0101} in \(X_{\ell}\).
\end{theorem}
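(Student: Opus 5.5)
The plan is to define $\mathcal{A}(t):=e_1(\mathcal{A}_\ell(t-\ell))$ and transfer the properties of $\hat{\mathcal{A}}_\ell$ from \cref{lem0309} to the phase space $H_\sigma^{\alpha}(\Omega)\times H^{\beta}(\Omega)$ through the endpoint map $e_1$. We then compare with $\mathcal{M}(t)=e_1(\mathcal{M}_\ell(t-\ell))$ from \cref{thm0404}.

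\emph{Compactness and invariance.} By \cref{lem0309}, $\mathcal{A}_\ell(t-\ell)$ is compact in $X_\ell$. It lies in the absorbing family, so after possibly shifting we have $\mathcal{A}_\ell(t-\ell)\subset\mathcal{B}_0^\ell(t-\ell)$. On this set $e_1$ is Lipschitz by \cref{lem0403}. Hence $\mathcal{A}(t)$ is compact, and \cref{weishuguji} gives $d_F(\mathcal{A}(t))\le d_F(\mathcal{A}_\ell(t-\ell))<\infty$. For invariance, use the intertwining identity
\[
e_1\bigl(L(t-\ell,\tau-\ell)\chi\bigr)=U(t,\tau)e_1(\chi),
\]
which follows directly from the definition of $L$ (both sides are the solution value at time $t$). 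Combined with $L(t-\ell,\tau-\ell)\mathcal{A}_\ell(\tau-\ell)=\mathcal{A}_\ell(t-\ell)$, it yields $U(t,\tau)\mathcal{A}(\tau)=\mathcal{A}(t)$.

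\emph{Pullback attraction.} Let $B\in\mathcal{D}$. By \cref{cor01}, for $s$ large, $U(t-\ell,t-s)B$ lies in the absorbing ball. The trajectories starting from $U(t-\ell-\ell,t-s)B$ on the interval of length $\ell$ then form a set $B_\ell\in\mathcal{D}_\ell$, contained in $\mathcal{B}_0^\ell$, with $e_1$ mapping $L(t-\ell,t-s)B_\ell$ onto $U(t,t-s)B$ up to the time shift. The Lipschitz estimate of \cref{lem0403} then gives
\[
\operatorname{dist}\bigl(U(t,t-s)B,\mathcal{A}(t)\bigr)\le\mu^{1/2}\operatorname{dist}_{X_\ell}\bigl(L(t-\ell,t-s-\ell)B_\ell,\mathcal{A}_\ell(t-\ell)\bigr)\to0 .
\]
Minimality, or equivalently being the minimal closed attracting family, follows from invariance together with compactness in the usual way.

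\emph{Inclusion $\mathcal{A}(t)\subset\mathcal{M}(t)$.} It suffices to show $\mathcal{A}_\ell(t)\subset\mathcal{M}_\ell(t)$ in $X_\ell$ and apply $e_1$. Since $\mathcal{A}_\ell(t)$ is invariant and bounded in the past (it is contained in the absorbing sets), we have
\[
\mathcal{A}_\ell(t)=L(t,t-s)\mathcal{A}_\ell(t-s)\subset L(t,t-s)\mathcal{B}_0^\ell(t-s).
\]
By \cref{thm0402}, $\mathcal{M}_\ell$ exponentially pullback attracts $\mathcal{B}_0^\ell$, so $\operatorname{dist}(\mathcal{A}_\ell(t),\mathcal{M}_\ell(t))\le\operatorname{dist}(L(t,t-s)\mathcal{B}_0^\ell(t-s),\mathcal{M}_\ell(t))\to0$. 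Since $\mathcal{M}_\ell(t)$ is closed, this forces $\mathcal{A}_\ell(t)\subset\mathcal{M}_\ell(t)$.

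\emph{Main obstacle.} The delicate point is that the Lipschitz constant $\mu$ in \cref{lem0403} is uniform over the relevant sets. Likewise, the attraction in \cref{thm0402} must hold for the family $\mathcal{B}_0^\ell(t-s)$ uniformly in $s$, that is, for a family bounded in the past and not just for a fixed bounded set. For both I would invoke the uniform bounds of \cref{lem0304} on $\gamma$ over intervals of length $\ell$.
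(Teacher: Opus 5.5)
Your proposal is correct and follows essentially the same route as the paper. The paper obtains $\hat{\mathcal{A}}$ by pushing $\hat{\mathcal{A}}_\ell$ forward through the endpoint map $e_1$ (citing \cref{weishuguji,lem0309,lem0403}) and deduces $\mathcal{A}(t)\subset\mathcal{M}(t)$ from the attraction property of $\hat{\mathcal{M}}_\ell$ in \cref{thm0402}; you spell out the invariance, attraction and closedness steps that the paper leaves implicit. One thing to tidy is the time bookkeeping in your attraction step: the trajectories you feed into $e_1$ should be those on $[t-\ell,t]$ issuing from $U(t-\ell,t-s)B\subset B_1(t-\ell)$, so that both sets lie in $\mathcal{B}_0^\ell(t-\ell)$, where \cref{lem0403} applies.
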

\begin{proof}
From \cref{weishuguji,lem0309,lem0403}, we infer the existence of the pullback attractor \(\hat{\mathcal{A}} = \{\mathcal{A}(t) \mid t \in \mathbb{R} \} = \{ e_1(\mathcal{A}_\ell(t-\ell)) \mid t \in \mathbb{R} \}\) for the process \(\{ U(t, \tau) \}_{t \geq \tau}\) generated by \cref{eq0101} in $H_\sigma^{\alpha}(\Omega) \times H^{\beta}(\Omega)$. From \cref{weishudingyi,lahuizhishudingyi,thm0402}, we further conclude that each section \(\mathcal{M}(t)\) of the pullback exponential attractor \(\hat{\mathcal{M}} = \{ \mathcal{M}(t) \mid t \in \mathbb{R} \}\) contains the corresponding section \(\mathcal{A}(t)\) of the pullback attractor \(\hat{\mathcal{A}}\), where \(\mathcal{A}_\ell(t-\ell)\) denotes the section of the pullback attractor \(\hat{\mathcal{A}}_\ell = \{ \mathcal{A}_\ell(t) \mid t \in \mathbb{R} \}\) — established in \cref{lem0309} for the process \(\{ L(t, \tau) \}_{t \geq \tau}\) generated by \cref{eq0101} in \(X_\ell\).
\end{proof}

\begin{remark}
Assume that \cref{ass:forcing} holds. If we can establish the Hölder continuity in time of the process \(\{U(t, \tau)\}_{t \geq \tau}\) generated by \cref{eq0101} in $H_\sigma^{\alpha}(\Omega) \times H^{\beta}(\Omega)$, then the pullback exponential attractor \(\hat{\mathcal{M}} = \{\mathcal{M}(t) : t \in \mathbb{R}\}\) for this process can be constructed as follows:
\[
\mathcal{M}_{\ell}(t) = \bigcup_{s \in [\tau, \tau+\ell]} L(t, s) \tilde{\mathcal{E}}_{\ell}(s),
\]
and
\[
\mathcal{M}(t) = e_1\left( \mathcal{M}_{\ell}(t-\ell) \right).
\]
\end{remark}

\section{Acknowledgement}
The authors sincerely thank the anonymous reviewers. Their insightful comments and constructive suggestions have greatly enhanced the quality of this manuscript.

\section*{Funding}
This work was supported by the National Natural Science Foundation of China (Nos. 12271293) and Natural Science Foundation of Shandong Province (No. ZR2023MA002, No. ZR2024MA069) and the project of Youth Innovation Team of Universities of Shandong Province (No. 2023KJ204) and Yunnan Fundamental Research Projects (No. 202401AT070411).

\section*{Conflict of interest}
The authors declares that they have no conflict of interest.

\end{document}